\colorlet{lightgray}{black!15}
\tikzset{->-/.style={decoration={
  markings,
  mark=at position .5 with {\arrow{>}}},postaction={decorate}}}
\tikzset{midarrow/.style={decoration={
    markings,
    mark=at position {#1} with {\arrow{>}}},postaction={decorate}}}
\newtheorem{theorem}{Theorem}[section]
\newtheorem{prop}[theorem]{Proposition}
\newtheorem{lemma}[theorem]{Lemma}
\newtheorem{cor}[theorem]{Corollary}
\theoremstyle{definition}
\newtheorem{definition}[theorem]{Definition}
\newtheorem{observation}[theorem]{Observation}
\newtheorem{terminology}[theorem]{Terminology}
\newtheorem{remark}[theorem]{Remark}
\newtheorem{example}[theorem]{Example}
\newtheorem{notation}[theorem]{Notation}
\newtheorem{l.notation}[theorem]{Local Notation}
\newtheorem{convention}[theorem]{Convention}
\theoremstyle{remark}
\definecolor{orange}{rgb}{.95,0.5,0}
\definecolor{light-gray}{gray}{0.75}
\definecolor{brown}{cmyk}{0, 0.8, 1, 0.6}
\definecolor{plum}{rgb}{.5,0,1}
\DeclareMathOperator{\pr}{\mathsf{pr}}
\DeclareMathOperator{\ev}{\mathsf{ev}}
\DeclareMathOperator{\Alg}{\sf Alg}
\DeclareMathOperator{\Mod}{\sf Mod}
\DeclareMathOperator{\CAlg}{\sf CAlg}
\DeclareMathOperator{\Aut}{\sf Aut}
\DeclareMathOperator{\colim}{{\sf colim}}
\DeclareMathOperator{\Hom}{\sf Hom}
\DeclareMathOperator{\End}{\sf End}
\DeclareMathOperator{\Fun}{{\sf Fun}}
\DeclareMathOperator{\Map}{{\sf Map}}
\DeclareMathOperator{\refl}{\sf refl}
\DeclareMathOperator{\Cat}{{\sf Cat}}
\DeclareMathOperator{\CAT}{{\sf CAT}}
\DeclareMathOperator{\op}{\mathsf{op}}
\DeclareMathOperator{\conf}{\mathsf{Conf}}
\DeclareMathOperator{\Spaces}{\mathsf{Spaces}}
\DeclareMathOperator{\fr}{\sf fr}
\DeclareMathOperator{\Bord}{{\sf Bord}_1^{\fr}}
\DeclareMathOperator{\BO}{\sf BO}
\DeclareMathOperator{\RP}{\RR\PP}
\def\ot{\otimes}
\DeclareMathOperator{\oo}{\infty}
\newcommand{\lag}{\langle}
\newcommand{\rag}{\rangle}
\newcommand{\w}{\widetilde}
\newcommand{\un}{\underline}
\newcommand{\ov}{\overline}
\newcommand{\xra}{\xrightarrow}
\newcommand{\xla}{\xleftarrow}
\def\cA{\mathcal A}\def\cB{\mathcal B}\def\cC{\mathcal C}
\def\cE{\mathcal E}\def\cH{\mathcal H}
\def\cK{\mathcal K}
\def\cM{\mathcal M}
\def\cR{\mathcal R}\def\cS{\mathcal S}
\def\cU{\mathcal U}\def\cV{\mathcal V}\def\cW{\mathcal W}\def\cX{\mathcal X}
\def\DD{\mathbb D}
\def\LL{\mathbb L}
\def\PP{\mathbb P}
\def\RR{\mathbb R}\def\SS{\mathbb S}
\def\ZZ{\mathbb Z}
\def\sB{\mathsf B}
\def\sO{\mathsf O}
\def\sR{\mathsf R}\def\sT{\mathsf T}
\def\sU{\mathsf U}
\def\bDelta{\mathbf\Delta}
\DeclareMathOperator{\Obj}{\mathsf{Obj}}
\DeclareMathOperator{\PShv}{\mathsf{PShv}}
\DeclareMathOperator{\uno}{\mathbbm{1}}
\DeclareMathOperator{\rigid}{\sf rigid}
\DeclareMathOperator{\id}{\sf id}
\DeclareMathOperator{\Mor}{\sf Mor}
\def\bfX{\boldsymbol{\mathfrak X}}
\newcommand{\bit}[1]{\textbf{\textit{#1}}}
\DeclareMathOperator{\lacts}{\curvearrowright}
\DeclareMathOperator{\Ass}{\sf Assoc}
\begin{document}

\title{Symmetries of a rigid braided category}

\author{David Ayala \& John Francis}

\address{Department of Mathematics\\Montana State University\\Bozeman, MT 59717}
\email{david.ayala@montana.edu}
\address{Department of Mathematics\\Northwestern University\\Evanston, IL 60208}
\email{jnkf@northwestern.edu}
\thanks{DA was supported by the National Science Foundation under awards 1812055 and 1945639. JF was supported by the National Science Foundation under award 1812057. This material is based upon work supported by the National Science Foundation under Grant No. DMS-1440140, while the authors were in residence at the Mathematical Sciences Research Institute in Berkeley, California, during the Spring 2020 semester.}

\begin{abstract}
We identify natural symmetries of each rigid higher braided category.
Specifically, we construct a functorial action by the continuous group $\Omega \sO(n)$ on each $\cE_{n-1}$-monoidal $(g,d)$-category $\cR$ in which each object is dualizable (for $n\geq 2$, $d \geq 0$, $d \leq g \leq \infty$).  
This action determines a canonical action by the continuous group $\Omega \RR\PP^{n-1}$ on the moduli space of objects of each such $\cR$.
In cases where the parameters $n$, $d$, and $g$ are small, 
we compare these continuous symmetries to known symmetries, which manifest as categorical identities.  

\end{abstract}

\keywords{Braided-monoidal categories. Automorphism groups. Thom spaces. }

\subjclass[2010]{Primary 22F50. Secondary 20J99, 55U35.}

\maketitle

\tableofcontents

\section*{Introduction}

This paper identifies natural symmetries of each \bit{rigid higher braided-monoidal category}.
Here, by {\it higher category} we mean \bit{$(g,d)$-category}\footnote{
Recall that a \bit{$(g,d)$-category} is a (weak) $g$-category in which, for each $d<k\leq g$, each $k$-morphism is invertible.
So, a $(1,1)$-category is an ordinary category.
The collection of $(g,d)$-categories organizes as a $(g+1,1)$-category $\Cat_{(g,d)}$.
}
for specified $d\geq 1$ and $d\leq g \leq \infty$;
by {\it higher braided-monoidal category} we mean \bit{$(n-1)$-monoidal higher category}\footnote{
Recall the operad $\cE_{n-1}$ of little $(n-1)$-disks.
An \bit{$(n-1)$-monoidal $(g,d)$-category} is a $\cE_{n-1}$-algebra in $\Cat_{(g,d)}$.
} 
for a specified $n \geq 2$.
By \bit{rigid} we mean that each object has a monoidal dual (see Definition~\ref{d4} for a precise definition).
\begin{example}
Here are examples of higher braided-monoidal categories, all for $d=1$.
\begin{enumerate}

\item
For $n-2>g$, an $(n-1)$-monoidal $g$-category is a symmetric monoidal $(g,1)$-category. For $g=1$ and $n>3$, this is simply a symmetric monoidal category, in the ordinary sense. 
\\
In particular, for $\Bbbk$ a commutative ring, an example is the symmetric monoidal category $\Mod_{\Bbbk}$ in which an object is a $\Bbbk$-module, a morphism is a $\Bbbk$-linear map, and the symmetric monoidal structure is given by $(M,N)\mapsto M \underset{\Bbbk} \ot N$, tensoring over $\Bbbk$.

\item
For $n=2$, a $1$-monoidal $g$-category is a monoidal $(g,1)$-category. 
For $n=2$ and $g=1$, this is a monoidal category in the ordinary sense. 
\\
In particular, for $\Bbbk$ a commutative ring and $A$ a $\Bbbk$-algebra, 
an example of a 1-monoidal $(1,1)$-category is ${\sf BiMod}_A$, in which an object is an $(A,A)$-bimodule, a morphism is a $(A,A)$-linear map, and the monoidal structure is given by $(M,N)\mapsto M \underset{A}\ot N$, tensor product over $A$.

\item
Of particular interest is $n=3$ and $g=1$, in which case an $2$-monoidal $1$-category is a braided-monoidal category in the ordinary sense. 
For example, for $\cH$ a quasi-triangular Hopf algebra over $\Bbbk$ with universal $R$-element $R\in \cH\ot \cH$ (such as $U_qG$), the braided-monoidal category ${\sf Rep}(\cH)$ has as objects the $\Bbbk$-linear representations of $\cH$; morphisms are $\cH$-linear maps; the monoidal structure is given by $(M,N)\mapsto M\ot_{\Bbbk} N$ tensor product over $\Bbbk$; and the braiding is the composite of multiplication by $R$ and permuting the tensor factors:
\[
\beta_{M,N} \colon M\underset{\Bbbk}\ot N\xra{R} M\underset{\Bbbk}\ot N \xra{\cong} N \underset{\Bbbk}\ot M~.
\]

\item 
Consider the $(\infty,1)$-category $\cM{\sf od}_{\Bbbk}$ of chain complexes over a commutative ring $\Bbbk$ localized at quasi-isomorphisms.
Derived tensor product $(U,V)\mapsto U \underset{\Bbbk}{\overset{\LL}\ot} V$ over $\Bbbk$ defines a symmetric monoidal structure on $\cM {\sf od}_{\Bbbk}$.
\begin{enumerate}
\item
Let $A$ be an $\cE_n$-algebra in $\cM{\sf od}_{\Bbbk}$. If the characteristic of $\Bbbk$ is zero and $n\geq 2$, by formality (\cite{kontsevich}, \cite{tamarkin}) this is equivalent to an $n$-Poisson algebra: a compatible pair of a commutative algebra structure and an $(n-1)$-shifted Lie algebra structure on $A$.
An example of an $(n-1)$-monoidal $(\infty,1)$-category is
${\sf L}\cM{\sf od}_{A}$:
an object is a left $A$-module, 
a morphism is an $A$-linear map, 
the $(n-1)$-monoidal structure is given by $(M,N)\mapsto M \underset{A}\ot N$, tensor product over $A$.

\item 
Let $A$ be an $\cE_{n-1}$-algebra in $\cM{\sf od}_{\Bbbk}$. 
An example of an $(n-1)$-monoidal $(\infty,1)$-category is $\cM{\sf od}^{\cE_{n-1}}_A$:
an object is an $\cE_{n-1}$-$A$-module,
a morphism is a $\cE_{n-1}$-$A$-linear map,
the $(n-1)$-monoidal structure is given by $(M,N)\mapsto M \underset{A} \ot N$, tensor product over $A$.

\end{enumerate}

\item
Generally, by Dunn's additivity (\cite{dunn}, and Theorem~5.1.2.2 ~\cite{HA}), for $n>2$ an $(n-1)$-monoidal $(g,1)$-category is a monoid-object among $(n-2)$-monoidal $(g,1)$-categories.  
In other words, an $(n-1)$-monoidal $(g,1)$-category is a $(g,1)$-category together with $(n-1)$ compatible monoidal structures on it.

\end{enumerate}

\end{example}

We state our main result informally here (see Theorem~\ref{t24} for a precise statement in the case that $d=1$, and Theorem~\ref{t35} for the general case).
For $X$ a space and $1\leq g\leq \infty$, denote by $\pi_{\leq g}X$ the \bit{fundamental $g$-groupoid of $X$}.\footnote{This is the initial \bit{$g$-type} under $X$ (i.e., it is the initial space under $X$ whose homotopy groups in degrees above $g$ vanish).}

\begin{theorem}
\label{t13}
Let $n\geq 2$ and $d \geq 1$ and $d\leq g \leq \infty$.
Let $\cR$ be a rigid $(n-1)$-monoidal $(g,d)$-category.
\begin{enumerate}
\item
The continuous group $\pi_{\leq g}\Omega \sO(n)$ acts on the $(n-1)$-monoidal $(g,d)$-category $\cR$.

\item
This action canonically descends to actions by the continuous group $\pi_{\leq g}\Omega\RP^{n-1}$ on its moduli space $\Obj(\cR)$ of objects as well as on its $(g,d-1)$-category $\Mor(\cR)$ of $1$-morphisms.  

\item
With respect to these actions, the source and target maps
\[
\Obj(\cR )\xla{~\ev_s~} \Mor(\cR)\xra{~\ev_t~}\Obj(\cR)
\]
are canonically $\pi_{\leq g}\Omega \SS^{n-1}$-equivariant.
\\
In particular, for each pair of objects $U,V\in \Obj(\cR)$, there is a canonical action by the continuous group $\pi_{\leq g-1}\Omega^2 \SS^{n-1}$ on the $(g-1,d-1)$-category $\un{\Hom}_{\cR}(U,V)$.  

\item
All of these actions are functorial among morphisms in the $(n-1)$-monoidal $(g,d)$-category $\cR$.

\end{enumerate}

\end{theorem}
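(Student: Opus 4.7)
The plan is to exploit an equivalence between rigid $\cE_{n-1}$-monoidal $(g,d)$-categories and $1$-dimensional framed field theories inside $\RR^n$. Specifically, I would first construct or invoke a framed bordism higher category $\cBord(\RR^n)$ whose objects are points of $\RR^n$ equipped with $n$-framings and whose $1$-morphisms are framed $1$-dimensional cobordisms embedded in $\RR^n$, and then establish an equivalence between appropriate symmetric monoidal functors $\cBord(\RR^n) \to \Cat_{(g,d)}$ and rigid $\cE_{n-1}$-monoidal $(g,d)$-categories. The rigidity data on the target corresponds precisely to the ability to define the functor on caps and cups, while the $\cE_{n-1}$-structure matches the local structure of disjoint framed points inside $\RR^n$. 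Granted this, the natural action of $\sO(n)$ on $\RR^n$ promotes $\cBord(\RR^n)$ to an $\sO(n)$-equivariant object, and transport along the equivalence endows each rigid $\cR$ with an $\sO(n)$-action.

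Next I would derive the loop-space action. The classifying map $\sO(n) \to \Aut(\cR)$ of the above action, after looping at the identity, yields a map $\Omega\sO(n) \to \Omega_{\id}\Aut(\cR)$, which evaluates to coherent functorial self-equivalences of $\Obj(\cR)$ and $\Mor(\cR)$. Functoriality in morphisms of $\cR$ and compatibility with $\ev_s$ and $\ev_t$ are then automatic, since all of this structure is assembled equivariantly out of the framed-bordism picture.

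For the descents in parts (2) and (3), I would use the fiber sequence $\sO(n-1) \to \sO(n) \to \SS^{n-1}$ together with the double cover $\SS^{n-1} \to \RP^{n-1}$. The stabilizer $\sO(n-1)$ of a framed point acts on an object only through the internalized $\cE_{n-1}$-structure and is therefore trivial on $\Obj(\cR)$, so the $\sO(n)$-action on $\Obj(\cR)$ factors through $\SS^{n-1}$. The residual $\ZZ/2$ of the quotient $\SS^{n-1} \to \RP^{n-1}$ is implemented by the duality involution $V \mapsto V^{\vee}$, which is a canonical self-equivalence of the moduli space of objects precisely because $\cR$ is rigid; this gives the $\Omega\RP^{n-1}$-action on $\Obj(\cR)$. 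On $\Mor(\cR)$ the duality involution swaps source and target, so only the $\SS^{n-1}$-action survives source/target-compatibly, yielding the $\Omega\SS^{n-1}$-equivariance of $\ev_s, \ev_t$; a further loop at a pair $(U,V)$ of objects produces the $\Omega^2\SS^{n-1}$-action on $\un{\Hom}_{\cR}(U,V)$.

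I expect the main obstacle to be Step 1: constructing $\cBord(\RR^n)$ and rigorously establishing the $\sO(n)$-equivariant equivalence with rigid $\cE_{n-1}$-monoidal $(g,d)$-categories. This is a partially-extended, $(g,d)$-truncated version of the framed cobordism hypothesis, and its careful formulation — particularly the uniform $\sO(n)$-equivariance of the bordism category — is the technical heart of the argument. Once this equivariant classification is in hand, the remaining assertions reduce to standard manipulations of the orthogonal-group fibration sequences and their loop spaces.
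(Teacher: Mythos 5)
Your strategy runs through an equivariant tangle-hypothesis-type classification, and that is where the gap lies. The statement you need in Step 1 --- that rigid $\cE_{n-1}$-monoidal $(g,d)$-categories are equivalent, $\sO(n)$-equivariantly, to suitable functors out of a bordism category of framed points and arcs in $\RR^n$ --- is not available at this level of generality, and you offer no construction of it; you correctly flag it as the technical heart, but the heart is missing. The paper deliberately does \emph{not} take this route. It instead builds the action by bare obstruction theory: the product $\sO(n-1)\times\sO(1)$ already acts on $\Alg^{\rigid}_{n-1}(\Cat_{(\infty,1)})$ (rotating the little-disks operad and taking $1$-categorical opposites), and extending this action map $\sO(n-1)\times\sO(1)$-equivariantly over $\sO(n)$ is shown (Lemma~\ref{t18}) to be the same as null-homotoping a single composite $\Omega\RR\PP^{n-1}\to \sO(n-1)\times\sO(1)\to \Aut\bigl(\Alg^{\rigid}_{n-1}(\Cat_{(\infty,1)})\bigr)$. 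That null-homotopy is produced by exhibiting $\RR\PP^{n-1}$ as the Thom space of the tautological line bundle over $\RR\PP^{n-2}$ (Lemma~\ref{t27}) and then applying the $n=2$ case (Theorem~\ref{t11}, quoted from \cite{n=2}) fiberwise over $\RR\PP^{n-2}$, glued by Dunn additivity (Lemma~\ref{prop.key}). No bordism categories appear; the general $(g,d)$ case is then obtained by the formal internalization and truncation of \S\ref{sec.internal}--\S\ref{sec.final}.

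Even granting yourself the cobordism/tangle hypothesis, the conclusion would be weaker than Theorem~\ref{t13}. The paper's own remark on the cobordism hypothesis makes exactly this point: that route naturally produces an $\Omega\sO(n)$-action on $\cR$ only through Morita-type automorphisms (equivalently, on moduli spaces of objects or of functors), functorial only in Morita equivalences. Parts (1) and (4) assert strictly more: the action is by $(n-1)$-monoidal self-equivalences of $\cR$ itself, functorial along arbitrary $(n-1)$-monoidal functors, and your ``transport along the equivalence'' does not obviously yield this lift. Your descent analysis in parts (2)--(3) (triviality of the $\sO(n-1)$-isotropy after forgetting the monoidal structure, the duality involution accounting for the $\ZZ_{/2\ZZ}$ in $\SS^{n-1}\to\RR\PP^{n-1}$, and the resulting source/target asymmetry) does match the mechanism the paper uses once the $\Omega\sO(n)$-action is in hand; the back end of your argument is sound, but the front end is not.
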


After the following result, which is assembled in~\S\ref{sec.final}, we hereafter only discuss the case of Theorem~\ref{t13} in which $d=1$ and $g=\infty$.
\begin{prop}
\label{t37}
Theorem~\ref{t13} follows from its cases in which $d=1$ and $g = \infty$.  

\end{prop}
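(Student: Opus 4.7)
The plan is to derive Theorem~\ref{t13} in full generality from its special case $d=1$, $g=\infty$ by two separate reductions, handling $g$ and $d$ in turn.

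\textbf{Reducing $g$ to $\infty$.} For any $1\leq g \leq \infty$, a $(g,1)$-category is the same data as an $(\infty,1)$-category with $(g-1)$-truncated mapping spaces, so there is a fully faithful embedding $\iota\colon \Cat_{(g,1)}\hookrightarrow \Cat_{(\infty,1)}$ preserving $\cE_{n-1}$-monoidal structures and rigidity. Applied to a rigid $\cE_{n-1}$-monoidal $(g,1)$-category $\cR$, the $g=\infty$ case of Theorem~\ref{t13} supplies an $\Omega\sO(n)$-action on $\iota(\cR)$, which by full faithfulness is equivalently an action on $\cR$ in $\Cat_{(g,1)}$. Because $\Cat_{(g,1)}$ is itself a $(g+1,1)$-category, the space of $\cE_{n-1}$-monoidal autoequivalences of $\cR$ is a $g$-type, and the action factors canonically through $\pi_{\leq g}\Omega\sO(n)$. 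Parts (2)--(4) transfer via the same naturality, using that $\iota$ commutes with $\Obj(-)$ and $\Mor(-)$ and that the relevant $g$-truncations of $\Omega\SS^{n-1}$ and $\Omega^2\SS^{n-1}$ are compatible with descent along $\iota$.

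\textbf{Reducing $d$ to $1$.} I would argue by induction on $d$, with base case $d=1$ handled above. For the inductive step, present $\cR$ via its enrichment data: the $g$-groupoid $\Obj(\cR)$ together with the mapping $(g-1,d-1)$-categories $\un{\Hom}_{\cR}(U,V)$ equipped with composition. The $(g,1)$-case applied to $\Obj(\cR)$ yields the $\pi_{\leq g}\Omega\RP^{n-1}$-action of part~(2). For the mapping categories, the rigidity of $\cR$ equips each $\un{\Hom}_{\cR}(U,V)$ with enough additional monoidal/braided structure to place them within the scope of the theorem at shifted parameters $(n+1, g-1, d-1)$; by induction this yields the $\pi_{\leq g-1}\Omega^2\SS^{n-1}$-action of part~(3). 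Functoriality in part~(4) of the base case forces compatibility of these actions with source/target and composition, so assembling them along the enrichment presentation of $\cR$ produces the desired $\pi_{\leq g}\Omega\sO(n)$-action on the $(g,d)$-category $\cR$; parts (2)--(4) in the general case follow from the assembly by construction.

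The principal technical obstacle lies in carrying out the assembly in the $d$-reduction: matching the inductive actions on $\Obj(\cR)$ and on each $\un{\Hom}_{\cR}(U,V)$ into coherent data specifying a single action on $\cR$. This matching should be forced by the functoriality (part~(4) at the base case) and by the rigidity assumption, but requires careful bookkeeping in a chosen Segal/complete-Segal model for $(g,d)$-categories. A subsidiary point I would need to verify is that the mapping categories of a rigid $\cE_{n-1}$-monoidal $(g,d)$-category inherit from $\cR$ enough structure (via Dunn additivity) to invoke Theorem~\ref{t13} at the next inductive level, which is what fuels the induction on $d$.
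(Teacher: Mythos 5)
Your first reduction (from general $g$ to $g=\infty$, in the case $d=1$) is sound and is essentially the paper's own argument: the inclusion $\Cat_{(g,1)}\hookrightarrow\Cat_{(\infty,1)}$ is a product-preserving, fully faithful right adjoint (Observation~\ref{t40}), hence induces a fully faithful functor on $\cE_{n-1}$-algebras, the $\Omega\sO(n)$-action on $\iota(\cR)$ transports to $\cR$ by full faithfulness, and the factorization through $\pi_{\leq g}\Omega\sO(n)$ is exactly Observation~\ref{t32} (the automorphism spaces in question are $g$-types).

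The reduction in $d$, however, has a genuine gap, and it is not the route the paper takes. Two problems. First, your inductive step rests on the claim that each mapping $(g-1,d-1)$-category $\un{\Hom}_{\cR}(U,V)$ of a rigid $\cE_{n-1}$-monoidal $(g,d)$-category inherits a rigid higher-monoidal structure ``at shifted parameters $(n+1,g-1,d-1)$'' to which the theorem can be applied. This is false for a general pair $(U,V)$: when $U\neq V$ the category $\un{\Hom}_{\cR}(U,V)$ carries no monoidal structure at all, and even $\un{\End}_{\cR}(U)$ is merely $\cE_1$-monoidal under composition and need not be rigid; Dunn additivity upgrades only $\un{\End}_{\cR}(\uno)$. (In the paper, the $\Omega^2\SS^{n-1}$-action of Theorem~\ref{t13}(3) is not obtained by reapplying the theorem to hom-categories; it is a consequence of the $\Omega\SS^{n-1}$-equivariance of the source--target span in part~(3).) Second, the ``assembly'' you flag as the principal technical obstacle is in fact the entire content of the reduction: one must produce a single coherent action on the simplicial object underlying $\cR$, not a compatible family of levelwise actions, and appealing to ``functoriality forces compatibility'' does not supply the coherences. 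The paper avoids both issues by not inducting on $d$ at all: it presents a $(g,d)$-category as a complete Segal object in $\cV=\Cat_{(g,d-1)}$ and then, via the Yoneda embedding $\Cat_1[\cV]\hookrightarrow\Fun(\cV^{\op},\Cat_{(\infty,1)})$ (Observation~\ref{r4}), regards a rigid $(n-1)$-monoidal $(g,d)$-category as a $\cV^{\op}$-indexed diagram of rigid $(n-1)$-monoidal $(\infty,1)$-categories (Observation~\ref{t34}). The $d=1$, $g=\infty$ action then acts pointwise on this functor category and is checked to preserve the relevant full subcategories (Observation~\ref{t36}); coherence is automatic because the action lives on the diagram category itself. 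To salvage your outline you would need to replace the enrichment-by-hand induction with some such device; as written, the inductive step does not go through.
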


As detailed in~\S1, 
rotating $(n-1)$-monoidal structures and taking 1-categorical opposites define an action by the product of orthogonal groups $\sO(n-1)\times \sO(1)$ on the $\infty$-category of $(n-1)$-monoidal $(\infty,1)$-categories with duals:
\begin{equation}
\label{e32}
\sO(n-1)\times \sO(1)
~\lacts~
\Alg^{\rigid}_{{n-1}}(\Cat_{(\infty,1)})
~.
\end{equation}
In~\S\ref{sec.main.proof}, we show that Theorem~\ref{t13} (for $d=1$ and $g=\infty$) quickly follows from the following result (restated, and proved, as Theorem~\ref{t19}).

\begin{prop}
\label{t10}
There is a canonical $\sO(n-1)\times \sO(1)$-equivariant extension of the action map of~(\ref{e32}):
\[
\xymatrix{
&
\sO(n-1)\times \sO(1)
\times 
\Alg_{{n-1}}^{\rigid}(\Cat_{(\infty,1)})
\ar[dr]^-{\rm (\ref{e32})'s~action}
\ar[dl]_-{\oplus \times \id }
&
\\
\sO(n)
\times 
\Alg_{{n-1}}^{\rigid}(\Cat_{(\infty,1)})
\ar@{-->}[rr]^-{\sO(n-1)\times \sO(1)- \rm equivariant}
&
&
\Alg_{{n-1}}^{\rigid}(\Cat_{(\infty,1)})
~.
}
\]

\end{prop}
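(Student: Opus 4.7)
The plan is to realize $\Alg^{\rigid}_{n-1}(\Cat_{(\infty,1)})$ as a space of symmetric-monoidal functors out of a framed $1$-bordism object that carries a tautological $\sO(n)$-action by rotating its $n$-framing, and to define the extension by precomposition with this action. The $\sO(n-1)\times \sO(1)$-equivariance then falls out of the fact that the $\sO(n)$-action is by group automorphisms of a single bordism object and $\oplus$ is a subgroup inclusion.

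\emph{Step 1: A 1-bordism model for rigidity.} First I would identify, as $\cE_{n-1}$-monoidal $(\infty,1)$-categories,
\[
\Alg^{\rigid}_{n-1}(\Cat_{(\infty,1)})
~\simeq~
\Fun^{\otimes}\bigl(\cBord_1^{\fr}(\RR^n),\,\Cat_{(\infty,1)}\bigr),
\]
where the right-hand side denotes $\cE_{n-1}$-monoidal functors. This is a form of the $1$-dimensional framed cobordism hypothesis: a rigid $\cE_{n-1}$-monoidal category is an $\cE_{n-1}$-monoidal category in which each object admits evaluation and coevaluation, and such structure is precisely the data of an $\cE_{n-1}$-monoidal functor out of the free rigid $\cE_{n-1}$-monoidal category on a point, which is $\cBord_1^{\fr}(\RR^n)$. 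I would establish this by combining the $n=2$ case (the classical cobordism hypothesis in dimension one, identifying functors out of $\cBord_1^{\fr}(\RR^2)$ with rigid monoidal categories) with Dunn additivity to stack the $(n-2)$ additional transverse directions.

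\emph{Step 2: The $\sO(n)$-action on the bordism model.} The space $\RR^n$ is acted on by $\sO(n)$ through linear automorphisms, and this induces a tautological action
\[
\sO(n)~\lacts~\cBord_1^{\fr}(\RR^n)
\]
through $\cE_{n-1}$-monoidal equivalences: any $A\in \sO(n)$ transports a framed properly embedded 1-manifold in $\RR^n$ to another via pushforward. Precomposition then yields an $\sO(n)$-action on $\Fun^{\otimes}(\cBord_1^{\fr}(\RR^n),\Cat_{(\infty,1)})$, hence via Step 1 on $\Alg_{n-1}^{\rigid}(\Cat_{(\infty,1)})$.

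\emph{Step 3: Matching along $\oplus$.} Along the block-sum $\oplus\colon \sO(n-1)\times \sO(1)\to \sO(n)$, the first factor rotates only the $(n-1)$ transverse coordinates, and so acts on $\cBord_1^{\fr}(\RR^n)$ through automorphisms of its $\cE_{n-1}$-monoidal structure that cover the standard $\sO(n-1)$-action on the operad $\cE_{n-1}$; after Step~1 this reproduces the rotational $\sO(n-1)$-action appearing in~(\ref{e32}). The second factor reverses the tangential coordinate, which on bordisms is the endofunctor swapping source and target of every $1$-morphism, reproducing the $\sO(1)$-action by $1$-categorical opposite. Together this identifies the restriction of the $\sO(n)$-action along $\oplus$ with the action of~(\ref{e32}).

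\emph{Step 4: Equivariance.} Since the $\sO(n)$-action in Step~2 is through group automorphisms of the single object $\cBord_1^{\fr}(\RR^n)$, and $\oplus$ realizes $\sO(n-1)\times \sO(1)$ as a subgroup of $\sO(n)$, the $\sO(n-1)\times\sO(1)$-equivariance of the extension with respect to left translation on $\sO(n)$ is automatic.

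The principal obstacle is Step 1: the precise formulation and proof of the $\cE_{n-1}$-monoidal cobordism-hypothesis statement for rigid (i.e., fiberwise-dualizable) $\cE_{n-1}$-monoidal categories. Rather than reproving a full cobordism hypothesis in dimension $n$, I would aim for the easier $1$-dimensional statement, bootstrapping from the known case $n=2$ by invoking Dunn additivity to pass between $\cE_{n-1}$-monoidal structures and iterated monoidal structures, and checking that both sides transform in the same way under adding a transverse direction equipped with an $\sO(1)$-worth of framing data.
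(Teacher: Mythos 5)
Your Step 1 is where the argument breaks, and it breaks in a way that cannot be patched by being more careful. The equivalence $\Alg^{\rigid}_{n-1}(\Cat_{(\infty,1)})\simeq \Fun^{\otimes}\bigl(\cBord_1^{\fr}(\RR^n),\Cat_{(\infty,1)}\bigr)$ is not a form of the cobordism hypothesis. The cobordism/tangle hypothesis identifies monoidal functors out of a bordism category with the \emph{moduli space of dualizable objects of the fixed target}; it does not identify the $\infty$-category whose objects are rigid monoidal categories with a functor category. With target $\Cat_{(\infty,1)}$ and its Cartesian symmetric monoidal structure (the only one in play here, per Convention~\ref{r8}), the dualizable objects are essentially trivial, so the right-hand side is nowhere near the left. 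The version of your idea that does make sense takes as target a Morita-type $(\infty,n)$-category of rigid $(n-1)$-monoidal categories --- but that is exactly the situation discussed in the paper's remark on ${\sf Morita}_{n}^{(n-1)\sf red}$, and it only produces an $\Omega\sO(n)$-action through Morita equivalences, functorial in Morita equivalences. Proposition~\ref{t10} is precisely the strengthening of that statement to honest $(n-1)$-monoidal equivalences, functorial in all $(n-1)$-monoidal functors, so the cobordism-hypothesis route lands you at the weaker statement the paper is trying to improve upon. (Two further warning signs: your Step 2 action of $\sO(n)$ on $\cBord_1^{\fr}(\RR^n)$ by ``pushforward of submanifolds'' does not preserve the bordism direction $\RR^{n-1}\times\DD^1$ --- the genuine action is by rotating framings, not ambient coordinates; and if your argument worked it would yield the full $\sO(n)$-action on $\Alg_{n-1}^{\rigid}(\Cat_{(\infty,1)})$, which Remark~\ref{r1} explicitly flags as beyond the paper's methods.)

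For contrast, the paper's proof avoids any bordism-category input. It first reduces (Lemma~\ref{t18}) the existence of an $\sO(n-1)\times\sO(1)$-equivariant extension along $\oplus$ to producing a trivialization of the composite $\Omega\RR\PP^{n-1}\to \sO(n-1)\times\sO(1)\to \Aut\bigl(\Alg^{\rigid}_{n-1}(\Cat_{(\infty,1)})\bigr)$ --- this is the rigorous version of your Step 4, and it is the easy part. The real work is the null-homotopy: the identification $\RR\PP^{n-1}\simeq{\sf Thom}\bigl(\gamma_1(n-1)\bigr)$ lets one present $\RR\PP^{n-1}$ as an $\RR\PP^{n-2}$-indexed family of circles $\PP(L\oplus\RR)$, one for each line $L\subset\RR^{n-1}$; over each $L$, Dunn additivity $\cE_{L^\perp}\otimes\cE_L\simeq\cE_{n-1}$ reduces the needed trivialization to the $n=2$ statement (Theorem~\ref{t11}), that the $\sO(1)\times\sO(1)$-action on rigid monoidal categories extends over $\sO(2)$. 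If you want to salvage your approach, you would need an independent proof of the $1$-dimensional tangle hypothesis in $\RR^{n-1}$ \emph{with target the honest $(\infty,1)$-categorical $\Alg_{n-1}(\Cat_{(\infty,1)})$ rather than a Morita category}, which is at least as hard as the proposition itself.
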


The $n=2$ case of Proposition~\ref{t10} is implied by the following result of~\cite{n=2}.
\begin{theorem}[\cite{n=2}]
\label{t11}
The action $\sO(1)\times \sO(1) \underset{(\ref{e11})}\lacts \Alg^{\rigid}(\Cat_{(\infty,1)})$ extends as an action $\sO(2)\lacts \Alg^{\rigid}(\Cat_{(\infty,1)})$.  
\end{theorem}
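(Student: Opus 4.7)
The plan is to realize $\Alg^{\rigid}(\Cat_{(\infty,1)})$ as the underlying $\infty$-groupoid of a symmetric monoidal $(\infty,2)$-category in which every object is fully $2$-dualizable, and to import the canonical framing-rotation $\sO(2)$-action on such objects from two-dimensional cobordism theory. Concretely, one introduces a Morita-style $(\infty,2)$-category $\cMorita$ in which an object is a rigid monoidal $(\infty,1)$-category $\cC$, a $1$-morphism $\cC \to \cD$ is a $(\cC,\cD)$-bimodule $(\infty,1)$-category, and a $2$-morphism is a bimodule functor. The rigidity of each object $\cC$ supplies exactly the data required for it to be fully $2$-dualizable in $\cMorita$: its dual is $\cC^{\otimes\op}$, and the requisite evaluation and coevaluation bimodules, together with all their iterated adjoints, are furnished by $\cC$ itself viewed as a bimodule in the appropriate ways.

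With full $2$-dualizability established, one invokes the general fact that in any symmetric monoidal $(\infty,2)$-category the $\infty$-groupoid of fully dualizable objects carries a canonical continuous action by $\sO(2)$ via rotation of the tangential framing --- a consequence of Lurie's cobordism hypothesis in dimension $2$. Applied to $\cMorita$, this produces an action $\sO(2)\lacts \Alg^{\rigid}(\Cat_{(\infty,1)})^{\sim}$ on the underlying $\infty$-groupoid. To promote this to an action on the full $\infty$-category $\Alg^{\rigid}(\Cat_{(\infty,1)})$, whose morphisms are monoidal functors rather than arbitrary bimodule categories, one uses the canonical functor sending a monoidal functor $F\colon \cC\to \cD$ to the $(\cC,\cD)$-bimodule $\cD$ with left action via $F$; the framing-rotation action of $\sO(2)$ is natural with respect to all $1$-morphisms of $\cMorita$ and therefore restricts to this subclass.

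It remains to check that the restriction along $\sO(1)\times \sO(1)\hookrightarrow \sO(2)$ recovers the prescribed action of~(\ref{e11}). The two factors embed as the coordinate reflections: $\mathsf{diag}(-1,+1)$ should implement the monoidal-opposite involution $\cC\mapsto \cC^{\otimes\op}$, and $\mathsf{diag}(+1,-1)$ the categorical-opposite $\cC\mapsto \cC^\op$. Their product in $\sO(2)$, the rotation by $\pi$, is then forced to act as the canonical double-dual (Serre) monoidal auto-equivalence, which must be identified with the composite of the two preceding involutions via the rigid-duality equivalence $\cC\simeq (\cC^\op)^{\otimes\op}$. The principal obstacles are (a) rigorously establishing full $2$-dualizability in $\cMorita$, which requires careful tracking of left and right adjoints of bimodule categories and is precisely where the rigidity hypothesis is used, and (b) matching the abstract framing-rotation symmetry with these concrete algebraic operations on monoidal categories; this pairing is the technical core of the argument, and it is what pins down the extended $\sO(2)$-action uniquely.
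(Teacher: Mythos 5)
First, a point of comparison: the paper does not prove Theorem~\ref{t11} at all --- it is imported from the reference [AF], listed as in preparation --- so there is no in-text argument to measure yours against. Judged on its own terms, your proposal has a genuine gap, and it is precisely the gap that the paper itself flags in its remark on the cobordism hypothesis. The cobordism hypothesis furnishes an $\sO(2)$-action on the \emph{space of fully dualizable objects} of a symmetric monoidal $(\infty,2)$-category. For your Morita-style $(\infty,2)$-category, the points of that space are rigid monoidal $(\infty,1)$-categories, but its paths are \emph{Morita equivalences} (invertible bimodule categories), not monoidal equivalences; the automorphism space of $\cC$ there is the Picard space of invertible $(\cC,\cC)$-bimodule categories, not $\Aut_{\Alg(\Cat_{(\infty,1)})}(\cC)$. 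So the groupoid you act on is not $\Alg^{\rigid}(\Cat_{(\infty,1)})^{\sim}$, and the identification you assert in your second paragraph is false. The subsequent promotion step --- that the framing-rotation action is ``natural with respect to all $1$-morphisms of the Morita category and therefore restricts'' to monoidal functors --- is unsupported: the cobordism-hypothesis action is defined only on the maximal sub-$\infty$-groupoid, and there is no mechanism for extending a group action on a groupoid of objects over non-invertible morphisms. The paper's own remark states that the content of its theorems \emph{beyond} the cobordism hypothesis is exactly (i) lifting the action from Morita-equivalences to genuine monoidal equivalences and (ii) extending functoriality from Morita-equivalences to arbitrary monoidal functors; your argument assumes both rather than proving them. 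Secondary issues: full $2$-dualizability of a rigid monoidal $(\infty,1)$-category in the Morita $(\infty,2)$-category is itself a nontrivial theorem requiring care about which adjoints of the (co)evaluation bimodules exist absent further finiteness hypotheses, and the matching of the two reflections in $\sO(1)\times\sO(1)\subset\sO(2)$ with $\cC\mapsto\cC^{\otimes\op}$ and $\cC\mapsto\cC^{\op_1}$ --- which you correctly identify as the technical core --- is asserted without argument.

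For contrast, the mechanism this paper uses for the analogous statement in higher $n$ (Proposition~\ref{t10}) never invokes the cobordism hypothesis: it builds the equivariant extension by exhibiting an explicit null-homotopy of a composite $\RR\PP^{n-1}\to\BO(n-1)\times\BO(1)\to{\sf BAut}_{\CAT}\bigl(\Alg^{\rigid}_{n-1}(\Cat_{(\infty,1)})\bigr)$ via a Thom-space/projective-family argument (Lemmas~\ref{t27} and~\ref{prop.key}), combined with the formal characterization of equivariant extensions in Lemma~\ref{t18}. A proof of Theorem~\ref{t11} in that spirit must produce, for each rigid monoidal $(\infty,1)$-category, a coherent trivialization of the double-(right-)dual monoidal automorphism, functorially in monoidal functors; that is where the real work lies, and it is not supplied by the framing-rotation symmetry of a Morita category.
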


\begin{remark}
\label{r1}
For $n>2$, we are confident that the statement of Proposition~\ref{t10} can be strengthened as an extension of the $\sO(n-1)\times \sO(1)$-action on $\Alg_{{n-1}}^{\rigid}(\Cat_{(\infty,1)})$ to an $\sO(n)$-action on $\Alg_{{n-1}}^{\rigid}(\Cat_{(\infty,1)})$.
For our purposes, we are content with the functorial $\Omega \sO(n)$-action on each $(n-1)$-monoidal $(\infty,1)$-category with duals (as in Theorem~\ref{t13}).
Nevertheless, it would be interesting to extend the methods presented in this article to attain such an $\sO(n)$-action.

\end{remark}

\begin{remark}
Let $n\geq 2$.
Consider the tangle higher-braided category $\Bord(\RR^{n-1})$.
This is an $(n-1)$-monoidal $(\infty,1)$-category.
Its space of objects is the moduli space of proper 0-submanifolds $W^0\subset \RR^{n-1}$ equipped with a {\it framing}, which is a null-homotopy of the constant map $W \xra{!} \ast \xra{\lag \RR^1 \subset \RR^n \rag} \RR\PP^{n-1}$.
Its space of morphisms is the moduli space of proper 1-submanifolds with boundary $W^1 \subset \RR^{n-1}\times \DD^1$ equipped with a {\it framing}, which is a null-homotopy of the Gauss map $W \xra{\tau_W} \RR\PP^{n-1}$.
This $(n-1)$-monoidal $(\infty,1)$-category $\Bord(\RR^{n-1})$ is rigid. 
Now, the space of null-homotopies of a map $W\to \RR\PP^{n-1}$ is a torsor for the continuous group $\Map(W,\Omega \RR\PP^{n-1})$.
The actions by $\Omega \RR\PP^{n-1}$ on the spaces $\Obj\bigl( \Bord(\RR^{n-1}) \bigr)$ and $\Mor\bigl( \Bord(\RR^{n-1}) \bigr)$ of Theorem~\ref{t13}(2) are given by rotation of framing, namely, as the canonical action of $\Omega \RR\PP^{n-1}$ on spaces of null-homotopies of maps to $\RR\PP^{n-1}$.  

\end{remark}

\begin{remark}
The cobordism hypothesis (see~\cite{baezdolan},~\cite{cobordism}) asserts an action $\sO(n) \lacts \Obj(\bfX)$ on the moduli space of objects in a symmetric monoidal $(\infty,n)$-category $\bfX$ in which each object is dualizable and each $k$-morphism has a left and right adjoint (for $0<k<n$).  
Entertain the example $\bfX = {\sf Morita}_{n}^{(n-1) \sf red}$, which is a symmetric monoidal Morita $(\infty,n)$-category of rigid $(n-1)$-monoidal $(\infty,1)$-categories.
Equivalences in ${\sf Morita}_{n}^{(n-1) \sf red}$ are Morita-equivalences between rigid $(n-1)$-monoidal $(\infty,1)$-categories.  
In this instance, the $\sO(n)$-action asserted by the cobordism hypothesis determines, for each rigid $(n-1)$-monoidal $(\infty,1)$-category $\cR$, an action $\Omega \sO(n) \to \Aut_{{\sf Morita}_{n}^{(n-1) \sf red}}(\cR)$, functorially in Morita-equivalences between such $\cR$.
Theorem~\ref{t13} enhances this action in two ways.
\begin{enumerate}
\item
It lifts this $\Omega \sO(n)$-action from being by Morita-equivalences, to being by $(n-1)$-monoidal $(\infty,1)$-equivalences:
\[
\Omega \sO(n) \to 
\Aut_{\Alg_{n-1}(\Cat_{(\infty,1)})}(\cR)
\longrightarrow
\Aut_{{\sf Morita}_{n}^{(n-1) \rm reduced}}(\cR)
~.
\]

\item
It extends the functoriality of this $\Omega \sO(n)$-action from Morita-equivalences to $(n-1)$-monoidal $(\infty,1)$-functors.  

\end{enumerate}

\end{remark}

\subsection{The action $\pi_1 \RR\PP^{n-1} \lacts \pi_0 \Obj(\cR)$}
\label{sec.involution}

Let $\cR$ be a rigid $(n-1)$-monoidal $(\infty,1)$-category.
Here we explain an action $\ZZ\lacts \Obj(\cR)$ that generates the action $\pi_1 \RR\PP^{n-1} = \pi_0 \Omega \RR\PP^{n-1} \lacts \pi_0 \Obj(\cR)$.

Each $v\in \SS^{n-2}$ determines an underlying monoidal category $(\cR,\overset{v}\ot)$ (see~(\ref{e25})), which is rigid.
By construction the monoidal $(\infty,1)$-categories $(\cR, \overset{v}\ot)$ and $(\cR , \overset{-v} \ot)$ are monoidally opposite to one another, continuously in $v\in \SS^{n-2}$:
\begin{equation}
\label{e43}
(\cR, \overset{v}\ot)^{{\sf rev}}
~\simeq~
(\cR, \overset{-v}\ot)
~,\qquad
(~
{\rm continuously~in~}
v\in \SS^{n-2}
~)
~.
\end{equation}
Taking right duals in $(\cR,\overset{v}\ot)$ defines an equivalence between the monoidal category $(\cR,\overset{v}\ot)$ and the categorical opposite of the monoidal category $(\cR , \overset{-v}\ot)$, continuously in $v\in \SS^{n-2}$:
\[
\rho_v\colon
(\cR,\overset{v}\ot)
\xra{~\simeq~}
(\cR^{\op_1} , \overset{-v}\ot)
~,\qquad
R
\mapsto
R^{\sR_v}
\qquad
(~
{\rm continuously~in~}
v\in \SS^{n-2}
~)
~.
\]
This equivalence restricts as an equivalence between their groupoids of objects, which are identical with one another:
\[
\Obj(\rho_v) \colon
\Obj(\cR)
\xra{~\simeq~}
\Obj(\cR^{\op_1})
\simeq
\Obj(\cR)
\qquad
(~
{\rm continuously~in~}
v\in \SS^{n-2}
~)
~.
\]
As $v\in \SS^{n-2}$ varies, this defines a map between spaces
\[
\SS^{n-2}
\longrightarrow
\Aut_{\Spaces}\bigl( \Obj(\cR) \bigr) 
~,\qquad
v
\longmapsto
\Obj(\rho_v)
~.
\]
So each $v\in \SS^{n-2}$ determines an action
\begin{equation}
\label{e44}
\ZZ
~\underset{\Obj(\rho_v)} \lacts~
\Obj(\cR)
~.
\end{equation}
Note that the identity~(\ref{e43}) lends the identity
\begin{equation}
\label{e45}
(\rho_v)^{-1} 
~\simeq~ 
\rho_{-v}
\qquad
(~
{\rm continuously~in~}
v\in \SS^{n-2}
~)
~.
\end{equation}
Because ${\SS^{n-2}}_{/\pm} \simeq \RR\PP^{n-2}$ is path-connected, the action~(\ref{e44}) is independent of $v\in \SS^{n-2}$ up to pre-composing with an automorphism of $\ZZ$.
Furthermore, provided $n>2$, there is a path $[0,1]\xra{\gamma} \SS^{n-2}$ from $\vec{1}\in \SS^{n-2}$ to $-\vec{1}\in \SS^{n-2}$.
So, for $n>2$, 
\[
[0,1] \longrightarrow \Aut_{\Spaces} \bigl( \Obj(\cR) \bigr)
~,\qquad
t
\mapsto
\Obj(\rho_{\gamma(t)})
\circ
\Obj(\rho_{\vec{1}})
~,
\]
is a path of automorphisms between $\Obj(\rho_{\vec{1}})^{\circ 2}$ and $\id_{\Obj(\cR)}$.
Consequently, provided $n>2$, the element $\pi_0 \Obj(\rho_{\vec{1}})^{\circ 2} \in \Aut_{\sf Sets} \bigl(\pi_0 \Obj(\cR) \bigr)$ has order 2.

In summary, the equivalence $\Obj(\cR) \xra{\Obj(\rho_{\vec{1}})}  \Obj(\cR)$ 
defines a canonical action $\ZZ \lacts  \Obj(\cR)$;
provided $n>2$, the resulting action $\ZZ\lacts \pi_0 \Obj(\cR)$ descends along the surjection $\ZZ \to \sO(1)$ as an action $\sO(1) \lacts \pi_0 \Obj(\cR)$.

\subsection{Classical case of ordinary braided-monoidal categories}

Here, we make Theorem~\ref{t13} explicit in the context of ordinary braided-monoidal categories.
Let $\cR$ be an ordinary braided-monoidal category (i.e., $\cR$ is a $\cE_2$-monoidal $(1,1)$-category) that is rigid.
The actions of Theorem~\ref{t13} can be summarized as
\begin{enumerate}

\item
an action by the group $\sB \ZZ \rtimes \sO(1) 
=
\pi_{\leq 1} \Omega \RR\PP^{2}$ on the 1-groupoid $\Obj(\cR)$;

\item
a compatible action by the group $\sB \ZZ = \pi_{\leq 1} \Omega \SS^{2}$ on the braided-monoidal category $\cR$;

\item
a compatible action by the group $\ZZ_{/2\ZZ} = \pi_{\leq 1} \Omega \sO(3)$ on the braided-monoidal category $\cR$.

\end{enumerate}
We explain these actions presently.
\begin{enumerate}

\item
The involution $\sO(1) \lacts  \Obj(\cR)$ can be described as follows.
Recall from~\S\ref{sec.involution} the action $\ZZ\lacts \Obj(\cR)$ generated by $\Obj(\rho_{\vec{1}})$, taking right duals in the underlying monoidal category $(\cR , \overset{\vec{1}}\ot)$.
There, it is also explained how the path $[0,1] \xra{t\mapsto e^{\pi i t}} \SS^1$ witnesses an identification $\Obj(\rho_{\vec{1}})^{\circ 2} \simeq \id$.
Using that $\Obj(\cR)$ is a 1-groupoid, this identification canonically extends as a homotopy coherent factorization of $\ZZ \xra{\bigl\lag \Obj( \rho_{\vec{1}}) \bigr\rag } \Aut_{\Spaces_{\leq 1}}\bigl( \Obj(\cR) \bigr)$ through 
$\sO(1) \to \Aut_{\Spaces_{\leq 1}}\bigl( \Obj(\cR) \bigr)$.

\item
We explain here how taking double right duals {\it in varying monoidal directions} defines an action $\ZZ\lacts V$ on each object $V\in \cR$, functorially in $V\in \cR$.
\begin{itemize}
\item[]
Namely, we seek a section to the forgetful functor $\Mod_{\ZZ}(\cR) \xra{\rm forget} \cR$.  
Through the equivalence $\Mod_{\ZZ}(\cR)
:=
\Fun_{\Cat_{(1,1)}}(\sB \ZZ , \cR)
\simeq
\Fun_{\Cat_{(1,1)}}( \SS^1 , \cR)$, 
the sought section is adjoint to the functor
\[
\SS^1
\longrightarrow
\Aut_{\Cat_{(1,1)}}(\cR)
~,\qquad
v
\mapsto
\rho_{-v} \circ \rho_{\vec{1}}
~,
\]
which is indeed a section because, via the identity~(\ref{e45}), $\rho_{-\vec{1}} \circ \rho_{\vec{1}} = \id$.

By construction, the resulting action $\sB \ZZ \lacts \Obj(\cR)$ intertwines with the $\sO(1)$-action just above as an action $\sB \ZZ \rtimes \sO(1) \lacts \Obj(\cR)$.

\end{itemize}

\item
We explain here how rotating the braiding defines an involution $\ZZ_{/2\ZZ} \lacts \cR$ on the braided-monoidal category $\cR$, which fixes the underlying $(1,1)$-category.
\begin{itemize}

\item[]
Taking orbits for the action $\sB \ZZ \simeq {\sf SO}(2) \lacts \cE_2$ defines a section of the forgetful functor
\[
\Fun\bigl( {\sf SO}(2) ,  \Alg_{2}(\Cat_{(1,1)}) \bigr)
~\simeq~
\Mod_{\ZZ}\bigl( \Alg_{2}(\Cat_{(1,1)}) \bigr)
\xra{~\rm forget~}
\Alg_{2}(\Cat_{(1,1)})
~.
\]
The value of this section on $\cR$ is the automorphism $\cR \xra{\alpha} \cR$ of the braided-monoidal category $\cR$ described as follows.
This automorphism $\alpha$ restricts as the identity on the underlying category: 
$\alpha(1) = \id_{\cR}$.
This automorphism $\alpha$ restricts as the automorphism $\alpha(2)$ of the braided-monoidal structure $\underset{\cR}\ot \in \Hom_{\Cat_{(1,1)}}(\SS^1 \underset{\Sigma_2} \times \cR^{\times 2} , \cR)$,
\[
\alpha(2)
~\in~ 
\Aut_{\Hom_{\Cat_{(1,1)}}(\SS^1 \underset{\Sigma_2} \times \cR^{\times 2}  , \cR)}(\underset{\cR}\ot)
~\simeq~
\Omega_{\underset{\cR}\ot} 
\Hom_{\Cat_{(1,1)}}( \SS^1 \underset{\Sigma_2} \times \cR^{\times 2}  , \cR)
~,
\]
that is adjoint to the composite $\Sigma_2$-invariant functor
\[
\alpha(2)
\colon
\SS^1
\times
\Bigl(
\SS^1 
\times \cR^{\times 2} 
\Bigr)
~=~
\Bigl(
\SS^1
\times
\SS^1 
\Bigr)
\times \cR^{\times 2} 
\xra{
{\rm multiply} \times \id
}
\SS^1 
\times \cR^{\times 2} 
\xra{~\underset{\cR}\ot~}
\cR
~.
\]
Informally, for each $U,V\in \cR$, the automorphism $\alpha$ carries the braiding $U\ot V \xra{\beta^V_U} V \ot U$ to $U \ot V \xra{\beta^V_U \circ ( \beta^U_V \circ \beta^V_U)} V \ot U$.

Now, by construction, the automorphism $\rho_{\vec{1}}^{\circ 2}$ of the braided-monoidal category $\cR$ conjugates the automorphism $\alpha$ to $\alpha^{-1}$:
there is an invertible 2-cell witnessing the diagram among braided-monoidal categories
\begin{equation}
\label{e50}
\xymatrix{
\cR
\ar[rr]^-{\alpha}
\ar[d]_-{\rho_{\vec{1}}^{\circ 2}}
&&
\cR
\ar[d]^-{\rho_{\vec{1}}^{\circ 2}}
\\
\cR
&&
\cR
\ar[ll]_-{\alpha}
}
\end{equation}
as commutative.\footnote{This diagram presents an extension of the pointed map $\SS^1 \vee \SS^1 \xra{\lag \alpha , \rho_{\vec{1}}^{\circ 2} \rag} {\sf BAut}_{\Alg_2(\Cat_{(1,1)})}(\cR)$ to a pointed map from the Klein bottle ${\sf Klein} \to {\sf BAut}_{\Alg_2(\Cat_{(1,1)})}(\cR)$.
For $n=3$, $d=1$ and general $1\leq g \leq \infty$, Theorem~\ref{t13} further extends this map along a natural embedding ${\sf Klein} \to {\sf SO}(3)$, the fundamental group of which can be identified as projection: $\ZZ \times \ZZ_{/2\ZZ} \xra{\sf proj} \ZZ_{/2\ZZ}$.}
This diagram witnesses an identification $\alpha^{\circ 2} \simeq \id_\cR$.
Using that $\cR$ is a braided-monoidal $(1,1)$-category, this identification canonically extends as a coherent factorization of $\ZZ \xra{\lag \alpha\rag} \Aut_{\Alg_2(\Cat_{(1,1)})}(\cR)$ through $\ZZ_{/2\ZZ} \to \Aut_{\Alg_2(\Cat_{(1,1)})}(\cR)$.\footnote{Further, there is an identification $\rho_{\vec{1}}^{\circ 2} \simeq \rho_{-\vec{1}} \circ \rho_{\vec{1}} = \id_\cR$.}

\end{itemize}

\end{enumerate}

\begin{notation}
We use the following notation in this paper:
\begin{itemize}

\item
For $V\in \cV$ an object in an $\infty$-category, and for $X\in \Spaces$ a space, the \bit{tensor of $V$ with $X$} is the object 
$
X 
\odot
V
:=
\colim
\Bigl(
X
\xra{!}
\ast
\xra{\lag V \rag}
\cV
\Bigr)
\in \cV
~.
$

\item 
$\Alg(\cV)$ is the $\oo$-category of associative algebras in a monoidal $\oo$-category $\cV$.

\item 
$\CAlg(\cV)$ is the $\oo$-category of commutative algebras in a symmetric monoidal $\oo$-category $\cV$.

\item 
$\Alg_{n-1}(\cV)$ is the $\oo$-category of $\cE_{n-1}$-algebras in a symmetric monoidal $\oo$-category $\cV$.

\item 
For $d \geq 1$, the $\infty$-category $\Cat_{(\infty,d)}$ is that of $(\infty,d)$-categories.

\end{itemize}

\end{notation}

\section{The basic action on higher braided-monoidal $(\infty,1)$-categories}

\subsection{Higher braided-monoidal categories}

Recall the topological operad of little $(n-1)$-disks, a version of which was introduced in \cite{bv}.
This topological operad presents an $\infty$-operad $\cE_{n-1}$.
For $\cU$ a symmetric monoidal $\infty$-category, 
\[
\Alg_{{n-1}}(\cU)
\]
is the $\infty$-category of \bit{$\cE_{n-1}$-algebras in $\cU$}.
Informally, an $\cE_{n-1}$-algebra in $\cU$ is an object $U\in \cU$ together with a $\SS^{n-2} \simeq \conf_2(\RR^{n-1})$-family and $\Sigma_2$-invariant of binary morphisms,
\[
\mu_2
\colon
\SS^{n-2}
\underset{\Sigma_2}
\odot
U^{\ot 2}
\longrightarrow
U
~,\qquad
\text{ or equivalently }
\qquad
\mu_2
\colon
\SS^{n-2}
\xra{~\Sigma_2 \text{\rm -equivariant}~}
\Hom_{\cU}( U^{\ot 2} , U)
~,
\]
which is associative in an appropriately homotopy coherent sense.

\begin{example}
In the case that $n=2$, such a binary morphism is familiar:
\[
\mu_2
\colon
U^{\ot 2}
\simeq
\SS^{0}
\underset{\Sigma_2}
\odot
U^{\ot 2}
\longrightarrow
U
~.
\]
In fact, there is a canonical equivalence between $\infty$-operads $\cE_1 \simeq \Ass$, and therefore an equivalence between $\infty$-categories of algebras over these $\infty$-operads:
\[
\Alg(\cU)
~:=~
\Alg_{\Ass}(\cU)
~\simeq~
\Alg_{1}(\cU)
~.
\]

\end{example}

\begin{example}
Let $0 \leq g < n-2$.
Suppose the underlying $(\infty,1)$-category of $\cU$ is in fact a $(g+1,1)$-category.
In this case, the space $\Hom_{\cU}( U^{\ot 2} , U)$ is a $g$-type.
Because $\SS^{n-2}$ is $(n-2)$-connective, such a binary morphism uniquely factors:
\[
\xymatrix{
\SS^{n-2}
\ar[rr]^-{\mu_2}
\ar[dr]_-{!}
&&
\Hom_{\cU}( U^{\ot 2} , U)
\\
&
\ast
\ar@{-->}[ur]_-{\ov{\mu}_2}
&
~.
}
\]
So the binary morphism takes the form of a $\Sigma_2$-invariant morphism $\ov{\mu}_2\colon U^{\ot 2} \to U$.
In fact\footnote{The unique morphism between symmetric $\infty$-operads $\cE_{n-1} \to {\sf Com}$ to the commutative operad witnesses an arity-wise $(n-3)$-truncation: $(\cE_{n-1})_{\leq n-3} \xra{\simeq} {\sf Com}$}, there is an equivalence between $\infty$-categories of algebras over these $\infty$-operads:
\[
\CAlg(\cU)
~:=~
\Alg_{{\sf Com}}(\cU)
~\simeq~
\Alg_{{n-2}}(\cU)
~.
\]

\end{example}

\begin{observation}
\label{t14}
\begin{enumerate}
\item[]

\item
Each linear injection $\RR \to \RR^{n-1}$ determines a morphism between topological operads $\cE_1 \to \cE_{n-1}$ of little disks, and therefore a morphism between $\infty$-operads:
\begin{equation}
\label{e24}
\SS^{n-2}
\xra{~\simeq~}
{\sf Hom}^{\sf lin.inj}(\RR , \RR^{n-1})
\longrightarrow
\Hom_{{\sf Operads}_{\infty}}(\cE_1 , \cE_{n-1})
~.
\end{equation}

\item
Let $\cU$ be a symmetric monoidal $\infty$-category.
The map~(\ref{e24}) corepresents a conservative functor between $\infty$-categories,
\begin{equation}
\label{e25}
\Alg_{{n-1}}(\cU)
\longrightarrow
\Alg(\cU)
~,\qquad
A
\mapsto (A , \overset{v} \ot)
~,
\end{equation}
which is evidently functorial in the symmetric monoidal $\infty$-category $\cU$.

\end{enumerate}

\end{observation}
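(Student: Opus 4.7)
The plan is to address (1) and (2) separately, both by reduction to standard facts about the little $k$-disks $\infty$-operads.

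For (1), I would first note that a linear injection $\RR \to \RR^{n-1}$ is determined by its value on $1 \in \RR$, which must be a non-zero vector in $\RR^{n-1}$; the space of such non-zero vectors deformation retracts onto $\SS^{n-2}$ by radial normalization, giving the leftmost equivalence in~(\ref{e24}). To associate an operad map $\cE_1 \to \cE_{n-1}$ to a unit vector $v \in \SS^{n-2}$, I would pick an orthogonal decomposition $\RR^{n-1} \cong \RR \cdot v \oplus v^{\perp}$ and send a rectilinear configuration of intervals $\bigsqcup_{i=1}^k \DD^1 \hookrightarrow \DD^1$ to the product of that configuration with the identity on the unit disk $\DD^{n-2} \subset v^{\perp}$, yielding a rectilinear configuration $\bigsqcup_{i=1}^k \DD^{n-1} \hookrightarrow \DD^{n-1}$. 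This construction manifestly preserves operadic composition and varies continuously in both the configuration and in $v$. The auxiliary identification $v^{\perp} \cong \RR^{n-2}$ is unique up to contractible ambiguity (the $\sO(n-2)$-torsor of such identifications), so does not affect the resulting map of $\infty$-operads, and one obtains the required map of spaces $\SS^{n-2} \to \Hom_{{\sf Operads}_{\infty}}(\cE_1, \cE_{n-1})$.

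For (2), the functor~(\ref{e25}) is, by definition, restriction along the operad map $\cE_1 \to \cE_{n-1}$ corresponding to $v$. To check it is conservative, I would invoke the standard fact that, for any $\infty$-operad $\cO$ and any symmetric monoidal $\infty$-category $\cU$, the forgetful functor $\Alg_{\cO}(\cU) \to \cU$ to the underlying object is conservative. Applied to both $\cO = \cE_{n-1}$ and $\cO = \cE_1 \simeq \Ass$, this yields the commuting triangle
\[
\xymatrix{
\Alg_{{n-1}}(\cU) \ar[rr]^-{(-,\overset{v}\ot)} \ar[dr]_-{\rm forget}
& & \Alg(\cU) \ar[dl]^-{\rm forget}
\\
& \cU &
}
\]
in which the two downward functors are conservative. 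A morphism in $\Alg_{{n-1}}(\cU)$ whose image in $\Alg(\cU)$ is an equivalence therefore has underlying morphism in $\cU$ that is an equivalence, and so is itself an equivalence in $\Alg_{{n-1}}(\cU)$.

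Functoriality in $\cU$ is immediate, since $\cU \mapsto \Alg_{\cO}(\cU)$ is visibly a functor of symmetric monoidal $\infty$-categories for any fixed $\infty$-operad $\cO$, and restriction along the operad maps produced in (1) is natural in $\cU$. The only mildly delicate point in the whole argument is ensuring the continuity of the assignment $v \mapsto (\cE_1 \to \cE_{n-1})$; this is most cleanly handled by working in a model of the little disks operads (either the May-style little-cubes model or a configuration-space model) in which rectilinear push-forward along a linear injection is manifestly continuous in the injection.
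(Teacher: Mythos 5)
The paper records this statement as an Observation with no proof, so the comparison is with the standard argument it implicitly invokes. Your part (2) is correct and is exactly that argument: restriction along any map of $\infty$-operads commutes with the forgetful functors to the underlying object of $\cU$, both of which are conservative, and the commuting triangle you draw does the rest; functoriality in $\cU$ is likewise immediate.

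The one genuine soft spot is in part (1), in the sentence asserting that the identification $v^{\perp}\cong\RR^{n-2}$ is ``unique up to contractible ambiguity (the $\sO(n-2)$-torsor of such identifications).'' An $\sO(n-2)$-torsor is not contractible, so this does not by itself produce a well-defined point of $\Hom_{{\sf Operads}_{\infty}}(\cE_1,\cE_{n-1})$. Worse, in the rectilinear little-cubes model the resulting operad map genuinely depends on the chosen orthonormal framing of $v^{\perp}$ (the cross-sectional cube gets rotated), while the rotation-invariant alternative of crossing with the unit ball of $v^{\perp}$ produces cylinders that are not literally little cubes or little round disks. Two clean repairs are available. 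First, work with the round-disks model and avoid $v^{\perp}$ altogether: for a linear injection $f\colon \RR\to\RR^{n-1}$, send a little interval with center $c$ and radius $r$ to the round disk with center $f(c)$ and radius $r\,|f(1)|$; this visibly preserves disjointness, containment, and operadic composition, is strictly compatible with composition because $f(c+rc')=f(c)+rf(c')$, and is continuous in $f$, which also discharges the continuity worry you flag at the end. Second, follow the coordinate-free route the paper itself indicates in Remark~\ref{r3}: a vector space $V$ functorially determines an $\infty$-operad $\cE_V$, and the splitting $\RR^{n-1}\cong \sT_v\SS^{n-2}\oplus {\sf Span}\{v\}$ together with Dunn additivity produces the map $\cE_1\to\cE_{n-1}$ with no choice of basis for the complement. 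With either repair your argument is complete.
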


Recall from~\cite{rezk} that an \bit{$(\infty,1)$-category} is a simplicial space $\cC\colon \bDelta^{\op} \to \Spaces$ that satisfies the following conditions.
\begin{itemize}
\item
The Segal condition: 
for each $[p]\in \bDelta$, the functor $\cC$ carries the opposite of the diagram 
\begin{equation}
\label{e51}
\xymatrix{
\{1\}
\ar[rr]
\ar[d]
&&
\{1<\cdots<p\}
\ar[d]
\\
\{0<1\}
\ar[rr]
&&
[p]
}
\end{equation}
in $\bDelta$
to a limit diagram in $\Spaces$.

\item
The univalent-completeness: the functor $\cC$ carries the opposite of the diagram
\begin{equation}
\label{e52}
\xymatrix{
\{0<2\}  \ar[rr]  \ar[d]
&&
\{0<1<2<3\}  \ar[d]
&&
\{1<3\} \ar[ll]  \ar[d]
\\
\ast \ar[rr]
&&
\ast
&&
\ast \ar[ll]
}
\end{equation}
in $\bDelta$
to a limit diagram in $\Spaces$.\footnote{Equivalently (by \cite{rezk.n}), the natural map
\[
\cC([0])\longrightarrow \cC([1])
\]
is an equivalence between the space of objects of $\cC$ and the subspace of the 1-morphisms $\cC([1])$ consisting of the isomorphisms.}
\end{itemize}

The $\infty$-category of \bit{$(\infty,1)$-categories} is the full $\infty$-subcategory
\begin{equation}
\label{e33}
\Cat_{(\infty,1)}
~\subset~
\PShv(\bDelta)
\end{equation}
consisting of the $(\infty,1)$-categories.
Note that the conditions of a simplicial space being an $(\infty,1)$-category are closed under the formation of limits.
In other words, $\Cat_{(\infty,1)}$ admits limits, and the functor~(\ref{e33}) preserves limits.

\begin{convention}
\label{r8}
As $\Cat_{(\infty,1)}$ admits finite products, it admits its Cartesian symmetric monoidal structure.
We regard $\Cat_{(\infty,1)}$ as a symmetric monoidal $\infty$-category with this symmetric monoidal structure.

\end{convention}

\begin{terminology}
\label{d4}
The $\infty$-category of \bit{$(n-1)$-monoidal $(\infty,1)$-categories} is
$\Alg_{{n-1}}(\Cat_{(\infty,1)})$.

\end{terminology}

\begin{definition}\label{def.duals}
\begin{enumerate}

\item[]

\item
Let $\cR\in \Alg(\Cat_{(\infty,1)})$ be a monoidal $(\infty,1)$-category.
An object $V \in \cR$ has a \bit{right dual} if left-tensoring with $V$
\[
\cR\xra{V\ot -}\cR
\]
has a right adjoint
\[
\cR\xla{-\ot V^R}\cR
\]
represented by right-tensoring with an object $V^R\in \cR$. 
The monoidal $(\infty,1)$-category $\cR$ has \bit{right duals} 
if every object of $\cR$ has a right dual.

\item
The $\infty$-category of \bit{rigid $(n-1)$-monoidal $(\infty,1)$-categories} is the full $\infty$-subcategory
\[
\Alg^{\rigid}_{{n-1}}( \Cat_{(\infty,1)} )
~\subset~
\Alg_{{n-1}}( \Cat_{(\infty,1)} )
\]
consisting of those $(n-1)$-monoidal $(\infty,1)$-categories $\cR$ for which, for each $v\in \SS^{n-2}$, the monoidal $(\infty,1)$-category $(\cR , \overset{v}\ot)$ (see the notation of~(\ref{e25})), has right duals.  

\end{enumerate}

\end{definition}

\begin{remark}
Adjoints are essentially unique, if they exist. 
Consequently, if an object $V\in\cR$ in a monoidal $(\infty,1)$-category has a right dual, it is essentially unique.
Better, provided $\cR$ is rigid, taking right duals defines an equivalence between monoidal $(\infty,1)$-categories:
\[
\cR
\longrightarrow
\cR^{{\sf rev} , \op_1}
\]
from $\cR$ to the monoidal opposite of its categorical opposite.
\end{remark}

\begin{observation}
\label{t15}
A monoidal $(\infty,1)$-category $\cR$ is rigid if and only if each object in $\cR$ has both a right dual and a left dual.

\end{observation}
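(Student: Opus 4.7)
The plan is to reduce Observation~\ref{t15} to an unwinding of Definition~\ref{def.duals} in the case $n=2$ combined with the identification of~(\ref{e43}). A monoidal $(\infty,1)$-category is, by definition, a $1$-monoidal (i.e., $\cE_1$-monoidal) $(\infty,1)$-category, so the relevant index sphere is $\SS^{n-2}=\SS^0=\{+1,-1\}$.

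First, I would apply Definition~\ref{def.duals} directly: $\cR$ is rigid precisely when, for each $v\in\SS^0$, the monoidal $(\infty,1)$-category $(\cR,\overset{v}\ot)$ has right duals. The task thus becomes identifying the two monoidal structures $(\cR,\overset{+1}\ot)$ and $(\cR,\overset{-1}\ot)$ concretely. Using~(\ref{e24}) for $n=2$, the map $\SS^0\to\Hom_{{\sf Operads}_\infty}(\cE_1,\cE_1)$ sends $+1$ to the identity and $-1$ to the orientation-reversing automorphism. Consequently $(\cR,\overset{+1}\ot)$ agrees with $\cR$ equipped with its given monoidal structure, while by the identity~(\ref{e43}) there is an equivalence $(\cR,\overset{-1}\ot)\simeq(\cR,\overset{+1}\ot)^{\sf rev}=\cR^{\sf rev}$.

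Second, I would observe that a right dual of $V$ in $\cR^{\sf rev}$ is, unwinding the definition of adjoints for the functor $V\ot^{\sf rev}-\colon\cR^{\sf rev}\to\cR^{\sf rev}$, precisely a right adjoint for $-\ot V\colon\cR\to\cR$, i.e., a left dual of $V$ in $\cR$. Thus $\cR^{\sf rev}$ has right duals if and only if every object of $\cR$ has a left dual. Combining the two equivalences, $\cR$ is rigid if and only if $\cR$ has right duals and every object of $\cR$ has a left dual, which is the statement of Observation~\ref{t15}.

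I do not foresee any genuine obstacle: the argument is entirely a matter of unpacking the definition of rigidity in the single nontrivial case $n=2$, together with the tautology that left duals in $\cR$ are right duals in $\cR^{\sf rev}$. Essential uniqueness of adjoints (noted in the remark following Definition~\ref{def.duals}) ensures that both directions of the equivalence are clean.
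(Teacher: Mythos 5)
Your proof is correct and is exactly the unwinding the authors intend: the paper leaves Observation~\ref{t15} unproved, and the content is precisely that for $n=2$ the index sphere $\SS^{0}=\{\pm 1\}$ gives the two monoidal structures $\cR$ and $\cR^{\sf rev}$, with right duals in $\cR^{\sf rev}$ being left duals in $\cR$. No gaps; the appeal to~(\ref{e43}) and to essential uniqueness of adjoints is the right justification.
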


\begin{observation}
\label{t22}
The property of an $(n-1)$-monoidal $(\infty,1)$-category being rigid can be detected on any of its underlying monoidal $(\infty,1)$-categories.
More precisely,
for each $v\in \SS^{n-2}$, the evident diagram among $\infty$-categories is a pullback:
\[
\xymatrix{
\Alg^{\rigid}_{{n-1}}( \Cat_{(\infty,1)} )
\ar[rr]^-{\rm fully~faithful}
\ar[d]_-{(\ref{e25})_v}
&&
\Alg_{{n-1}}( \Cat_{(\infty,1)} )
\ar[d]^-{(\ref{e25})_v}
\\
\Alg^{\rigid}( \Cat_{(\infty,1)} )
\ar[rr]^-{\rm fully~faithful}
&&
\Alg( \Cat_{(\infty,1)} )
.
}
\]

\end{observation}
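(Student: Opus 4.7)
The plan is to reduce the pullback claim to showing that, for a fixed $\cR\in \Alg_{n-1}(\Cat_{(\infty,1)})$, rigidity of the underlying monoidal $(\infty,1)$-category $(\cR,\overset{v}\ot)$ of Observation~\ref{t14}(2) is independent of $v\in \SS^{n-2}$. Granting this, the pullback property is immediate: by Definition~\ref{def.duals}, $\cR$ is rigid iff $(\cR,\overset{v}\ot)$ is rigid for \emph{every} $v$, whereas the square tests rigidity at a \emph{single} choice of $v$; $v$-independence identifies the two conditions, so the top horizontal arrow is the pullback of the bottom along the right-hand vertical.

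To establish $v$-independence I would combine two ingredients. First, the assignment $v\mapsto (\cR,\overset{v}\ot)$ is the value at $v$ of an $\SS^{n-2}$-parameterized family of functors $\Alg_{n-1}(\Cat_{(\infty,1)})\to \Alg(\Cat_{(\infty,1)})$, obtained by passage to algebras from the $\SS^{n-2}$-family of $\infty$-operad maps $\cE_1\to \cE_{n-1}$ of~(\ref{e24}). A path from $v$ to $v'$ in $\SS^{n-2}$ therefore determines a path in $\Alg(\Cat_{(\infty,1)})$ from $(\cR,\overset{v}\ot)$ to $(\cR,\overset{v'}\ot)$, hence a monoidal equivalence between them; any such equivalence transports right adjoints of left-tensoring functors and so preserves rigidity. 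For $n\geq 3$ this alone suffices, as $\SS^{n-2}$ is path-connected. Second, for the antipodal pair $\{v,-v\}$ (the remaining case when $n=2$, where $\SS^0$ has two components) I would invoke the identity $(\cR,\overset{v}\ot)^{\sf rev}\simeq (\cR,\overset{-v}\ot)$ of~(\ref{e43}) together with Observation~\ref{t15}: monoidal reversal interchanges right and left duals, and rigidity is the conjunction of having both, so it is invariant under reversal. Combining the two steps covers all $n\geq 2$.

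The main technical point is the first ingredient, namely assembling the $\SS^{n-2}$-family of functors $\Alg_{n-1}(\Cat_{(\infty,1)})\to \Alg(\Cat_{(\infty,1)})$ coherently from the $\SS^{n-2}$-family of $\infty$-operad morphisms, so that a path of parameters genuinely produces an equivalence of monoidal $(\infty,1)$-categories in the appropriate homotopy-coherent sense. This is essentially the content of the corepresentation assertion already recorded in Observation~\ref{t14}(2), which packages the $\SS^{n-2}$-parameterization into a single map of $\infty$-categories; once that is in hand the remainder of the argument is a direct unwinding of the definitions of rigidity and of monoidal reversal.
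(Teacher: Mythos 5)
Your argument is correct, and it matches the reasoning the paper leaves implicit: Observation~\ref{t22} is stated without proof, but the adjacent Observation~\ref{t16} runs exactly the same two ingredients (path-connectivity of the parameter space for $n\geq 3$, and the reversal identity~(\ref{e43}) together with Observation~\ref{t15} to handle the antipodal pair, which is all that remains when $n=2$). Your reduction of the pullback claim to $v$-independence of rigidity is the right move, since all four corners are full subcategories, so nothing further is needed.
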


\subsection{The basic action}

\begin{observation}
\label{t5}
Precomposing by the involution $\sO(1)\lacts \bDelta$ given by reversing linear orders defines an involution $\sO(1) \lacts \PShv(\bDelta)$.
This involution preserves the condition of a simplicial space being an $(\infty,1)$-category.
In this way, there is an involution,
\[
\sO(1)
~\lacts~
\Cat_{(\infty,1)}
~,\qquad
(-1)\cdot \cC~:=~ \cC^{\op_1}
~,
\]
on the $\infty$-category of $(\infty,1)$-categories given by \bit{taking 1-categorical opposites}.

\end{observation}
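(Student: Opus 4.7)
The plan is to verify the three assertions of the statement in sequence: (i) order-reversal gives an involution of $\bDelta$, (ii) precomposition therefore gives an involution of $\PShv(\bDelta)$, and (iii) this involution restricts to $\Cat_{(\infty,1)}$. Only (iii) requires real checking; it reduces to showing that order-reversal preserves the Segal diagram~(\ref{e51}) and the univalent-completeness diagram~(\ref{e52}) as $\{$op-shaped$\}$ limit diagrams in $\bDelta$.

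First I would note that the assignment $[p] \mapsto [p]$, $\bigl(i \in [p]\bigr) \mapsto \bigl(p-i \in [p]\bigr)$, is a strict automorphism of $\bDelta$ of order two, since it reverses each monotone map between linearly ordered sets and squares to the identity. Precomposition then induces a strict $\sO(1)$-action on the presheaf $\infty$-category $\PShv(\bDelta)$. For (iii), one observes that the full $\infty$-subcategory $\Cat_{(\infty,1)} \subset \PShv(\bDelta)$ is carved out by the limit-preservation conditions against the diagrams~(\ref{e51}) and~(\ref{e52}); so it suffices to show these diagrams are carried by order-reversal to diagrams of the same type in $\bDelta$.

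For the Segal diagram~(\ref{e51}), order-reversal on $[p]$ carries the subposet $\{0<1\}$ to $\{p-1<p\}$ and the subposet $\{1<\cdots<p\}$ to $\{0<\cdots<p-1\}$, with intersection $\{p-1\}$. The resulting pushout decomposition $[p] \simeq \{0<\cdots<p-1\} \cup_{\{p-1\}} \{p-1<p\}$ is equally one of the standard Segal decompositions, so a presheaf $\cC$ satisfies the Segal condition against the original diagram if and only if its reversal $\cC^{\op_1}$ does. Similarly, order-reversal on $[3]$ carries the subposet $\{0<2\} \subset [3]$ to $\{1<3\}$ and $\{1<3\}$ to $\{0<2\}$, so the diagram~(\ref{e52}) is carried to itself up to swapping the two wings; hence univalent-completeness is preserved.

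Combining these, the involution on $\PShv(\bDelta)$ restricts to an involution on $\Cat_{(\infty,1)}$, and the formula $(-1)\cdot \cC = \cC^{\op_1}$ records its definition. The only potentially subtle point is the diagrammatic bookkeeping for the univalent-completeness condition, and even this is immediate once one notices that the two $\{i<j\} \hookrightarrow [3]$ summands are interchanged by the reversal.
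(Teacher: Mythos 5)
Your proposal is correct and follows exactly the route the paper intends (the paper states this as an Observation with no written proof, so your job was to fill in the routine checks, which you do accurately: order-reversal swaps the two wings of the diagram~(\ref{e52}) and carries the Segal decomposition $\{0<1\}\cup_{\{1\}}\{1<\cdots<p\}$ to the last-edge decomposition). The only point you gloss is that the first-edge and last-edge Segal decompositions cut out the same full subcategory of $\PShv(\bDelta)$ --- both are equivalent to the full spine decomposition of $[p]$ --- but this is standard and does not constitute a gap.
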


\begin{observation}
\label{t2}
Presenting the $\infty$-operad $\cE_{n-1}$ as the topological operad of little $(n-1)$-disks~(\cite{May}, \cite{bv}), the linear action $\sO(n-1)\lacts \RR^{n-1}$ defines an $\infty$-operadic action $\sO(n-1)\lacts \cE_{n-1}$:
\begin{equation}
\label{e22}
\BO(n-1)
\xra{~\bigl \lag \sO(n-1) \lacts \cE_{n-1} \bigr \rag~}
{\sf Operads}_{\infty}
~.
\end{equation}
The action~(\ref{e22}) corepresents an action $\sO(n-1)\lacts \Alg_{{n-1}}(\cU)$ on the $\infty$-category of $\cE_{n-1}$-algebras in any symmetric monoidal $\infty$-category $\cU$, functorially in the symmetric monoidal $\infty$-category $\cU$.

\end{observation}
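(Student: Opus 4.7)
The plan is to construct the $\sO(n-1)$-action strictly at the level of topological operads, then transport it to $\oo$-operads, and finally derive the claimed action on $\Alg_{{n-1}}(\cU)$ by contravariant functoriality in $\cE_{n-1}$.

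For the first step, I would use a topological model for the little $(n-1)$-disks operad whose arity-$k$ space consists of tuples of rectilinear embeddings $\RR^{n-1}\hookrightarrow \RR^{n-1}$ with pairwise disjoint images, each embedding of the form $x\mapsto rx+c$ with $r>0$ and $c\in \RR^{n-1}$. For $g\in \sO(n-1)$ and such an embedding $f$, the conjugate $g\circ f \circ g^{-1}$ is $x\mapsto rx+g(c)$, again rectilinear with the same dilation factor; and disjointness of images is preserved since $g$ is a homeomorphism. Continuous in $g$, this conjugation assembles as a continuous action of the topological group $\sO(n-1)$ on each arity space, and it commutes with operadic composition and with the symmetric group action because conjugation respects both composition of maps and relabeling of components. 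This produces a continuous $\sO(n-1)$-action in topological operads.

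Next, applying the passage from topological operads to $\oo$-operads (through, e.g., the operadic nerve of the simplicial operad obtained arity-wise by the singular simplices functor) transports this action to a continuous $\sO(n-1)$-action on $\cE_{n-1}\in {\sf Operads}_\oo$; this is exactly the data of the pointed map $\BO(n-1)\to {\sf Operads}_\oo$ appearing in~(\ref{e22}). For the corepresentation claim, the assignment $\cO\mapsto \Alg_\cO(\cU)$ is contravariantly functorial in the $\oo$-operad $\cO$ and covariantly functorial in the symmetric monoidal $\oo$-category $\cU$; precomposing the map~(\ref{e22}) under this functoriality yields the sought action $\sO(n-1)\lacts \Alg_{{n-1}}(\cU)$, manifestly functorial in $\cU$.

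The main subtlety is selecting a strict topological model on which $\sO(n-1)$ acts, since conjugation by an orthogonal transformation need not preserve every standard variant of a disks operad (for instance, embeddings built from coordinatewise dilations aligned to axes). Rectilinear embeddings with isotropic scalar dilation are preserved by conjugation, which allows the transition from a set-theoretic rotation on $\RR^{n-1}$ to a strict continuous operadic action, and hence to the desired $\oo$-operadic action on $\cE_{n-1}$. All remaining verifications are routine continuity and compatibility checks of the conjugation formula.
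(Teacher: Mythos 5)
Your construction is correct and is exactly the standard one that the paper leaves implicit: Observation~\ref{t2} is stated without proof, and your conjugation action on the scalar-dilation model of little disks is the concrete form of the coordinate-free functoriality $V \mapsto \cE_V$ that the paper offers as its conceptualization in Remark~\ref{r3}, with the passage to $\Alg_{n-1}(\cU)$ by contravariant functoriality in the operad being the intended meaning of ``corepresents.'' One small slip: an affine map $x\mapsto rx+c$ of all of $\RR^{n-1}$ with $r>0$ is surjective, so for the disjoint-images condition to be nonvacuous the operad must be modeled on embeddings of the open unit disk into itself (or configurations of disjoint round disks in a fixed disk); this does not affect your argument, since $\sO(n-1)$ preserves the round disk and the conjugation formula is unchanged.
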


\begin{remark}
\label{r3}
We offer a way to conceptualize the action of Observation~\ref{t2}.
\begin{enumerate}

\item
Each rank $(n-1)$ vector bundle $\xi = (E \to B)$ determines a composite functor 
\[
B \xra{~\xi~} \BO(n-1) \xra{~(\ref{e22})~} {\sf Operads}_{\infty}
~.
\]
In particular, each $(n-1)$-dimensional vector space $V$ over $\RR$ determines an $\infty$-operad $\cE_V$ that is non-canonically equivalent with $\cE_{n-1} =: \cE_{\RR^{n-1}} \simeq \cE_V$.

\item
For each $v\in \SS^{n-2}$, 
there is a standard linear isomorphism $\sT_v \SS^{n-2} \oplus  \RR \cong {\sf Span}\{v\}^{\perp} \oplus   {\sf Span}\{v\} \cong \RR^{n-1}$.
This linear isomorphism from a direct sum determines a morphism between $\infty$-operads, from the $\infty$-operadic tensor product: 
$\cE_{\sT_v \SS^{n-2}} \ot \cE_1 \to \cE_{n-1}$.  
Dunn's additivity (\cite{dunn},~\cite{HA}) grants that this morphism is an equivalence.
Using the definition of the $\infty$-operadic tensor product, this equivalence between $\infty$-operads corepresents an equivalence between $\infty$-categories,
\begin{equation}
\label{e23}
\Alg_{{n-1}}(\cU)
~\simeq~
\Alg_{{\sT_v \SS^{n-2}}}\bigl(
\Alg
(\cU)
\bigr)
~,
\end{equation}
which is evidently functorial in the symmetric monoidal $\infty$-category $\cU$.

\item
Let $v\in \SS^{n-2}$.
Reflection in the line ${\sf Span}\{v\} \subset \RR^{n-1}$ defines an action $\sO(1) \xra{\lag \refl_v \rag} \sO(n-1) \lacts \cE_{n-1}$, which corepresents the involution
\[
(-)^{{\sf rev}_v}
\colon
\Alg_{{n-1}}(\cU)
~\underset{(\ref{e23})}\simeq~
\Alg_{{\sT_v \SS^{n-2}}}\bigl(
\Alg
( \Cat_{(\infty,1)} )
\bigr)
\xra{
\Alg_{{\sT_v \SS^{n-2}}}( (-)^{{\sf rev}}) 
}
\Alg_{{\sT_v \SS^{n-2}}}\bigl(
\Alg
( \Cat_{(\infty,1)} )
\bigr)
~\underset{(\ref{e23})}\simeq~
\Alg_{{n-1}}(\cU)
~,
\]
that takes monoidal opposites in the $v$-direction.

\end{enumerate}

\end{remark}

\begin{example}
As the case that $n=3$, the action $\sO(2) \lacts \Alg_{2}(\cU)$ determines, for each $\cE_2$-algebra $A$ in $\cU$, a automorphism $\alpha \colon \ZZ \simeq \Omega \sO(2) \lacts A$ of the $\cE_2$-algebra.  
On the underlying object of $A$, this automorphism $\alpha(1) = \id$ is the identity.
On the family of 2-ary morphisms, 
$(\SS^1 \underset{\Sigma_2}\odot A^{\ot 2} \xra{\mu_2}A)\in \Map^{\Sigma_2}\bigl( \SS^1 , \Hom_{\cU}( A^{\ot 2} , A) \bigr)$, it is the element
\[
\alpha(2)
\in 
\Aut_{\Map^{\Sigma_2}\bigl( \SS^1 , \Hom_{\cU}(
A^{\ot 2}
,
A)
\bigr)
}
(\mu_2)
~\simeq~
\Omega_{\mu_2} \Map^{\Sigma_2}\bigl( \SS^1 , \Hom_{\cU}(
A^{\ot 2}
,
A)
\bigr)
\]
corresponding to the based loop at $\mu_2$ adjoint to 
\[
\SS^1 \odot \bigl( \SS^1 \underset{\Sigma_2}\odot A^{\ot 2} \bigr)
~\simeq~
\bigl( \SS^1 \times \SS^1 \bigr) \underset{\Sigma_2}\odot A^{\ot 2}
\xra{~( \SS^1-{\rm multiply} ) ~\odot~ \id~}
\SS^1 \underset{\Sigma_2}\odot A^{\ot 2}
\xra{\mu_1}
A
~.
\]
In other words, $\alpha$ is the automorphism of the braiding structure of $A$ that carries, for each $a,b\in A$, the equivalence $a \ot b \xra{\beta_{a,b}} b\ot a$ in $A$ to the equivalence $a \ot b \xra{ \beta_{b}^a \circ (\beta_{a}^b \circ \beta_{b}^a)} b \ot a$ in $A$.

\end{example}

\begin{observation}
\label{t25}
Let $\cU$ be a symmetric monoidal $\infty$-category.
Because the action of Observation~\ref{t5} preserves products, 
it and the action of Observation~\ref{t2} assemble as an action 
\begin{equation}
\label{e10}
\sO(n-1)\times \sO(1) 
~\lacts~
\Alg_{{n-1}}(\cU)
~.
\end{equation}

\end{observation}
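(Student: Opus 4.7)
The plan is to assemble the two actions by recognizing that they operate on independent pieces of the structure defining $\Alg_{n-1}(\cU)$, namely the $\infty$-operad $\cE_{n-1}$ and the ambient symmetric monoidal $\infty$-category $\cU$. First, I would verify the parenthetical claim that the action of Observation~\ref{t5} preserves products. Since that involution is defined by precomposition along the order-reversing involution of $\bDelta$, it is a limit-preserving endofunctor of $\PShv(\bDelta)$ whose restriction to $\Cat_{(\infty,1)}$ therefore preserves finite products. Concretely, there is a canonical equivalence $(\cC \times \cD)^{\op_1} \simeq \cC^{\op_1} \times \cD^{\op_1}$ natural in $\cC, \cD \in \Cat_{(\infty,1)}$. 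With respect to the Cartesian symmetric monoidal structure of Convention~\ref{r8}, this upgrades the $\sO(1)$-action to an action through symmetric monoidal equivalences of $(\Cat_{(\infty,1)} , \times)$.

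Next, I would invoke the functoriality of the construction $\cU \mapsto \Alg_{n-1}(\cU)$ in the symmetric monoidal $\infty$-category $\cU$. Applied to the previous step, this transports the $\sO(1)$-action on $\Cat_{(\infty,1)}$ to an $\sO(1)$-action on $\Alg_{n-1}(\Cat_{(\infty,1)})$, whose non-identity element sends an $\cE_{n-1}$-algebra in $(\infty,1)$-categories to its value after post-composition with the opposite-category involution.

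Finally, I would combine this with the $\sO(n-1)$-action of Observation~\ref{t2}. That observation states explicitly that the $\sO(n-1)$-action on $\Alg_{n-1}(\cU)$ is functorial in $\cU$ as a symmetric monoidal $\infty$-category; applied to the $\sO(1)$-family of symmetric monoidal equivalences $(-)^{\op_1}$ of $\Cat_{(\infty,1)}$, this naturality asserts exactly that the $\sO(1)$-action just constructed commutes with the $\sO(n-1)$-action. The two thus refine to a single action of the product group $\sO(n-1) \times \sO(1)$ on $\Alg_{n-1}(\Cat_{(\infty,1)})$, as desired.

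The main point requiring care is that the commutation of the two actions holds coherently to all homotopical orders, not merely on the level of $\pi_0$. This is however automatic rather than delicate: the $\sO(n-1)$-action arises from varying the source of the algebra representation (the $\infty$-operad $\cE_{n-1}$) while the $\sO(1)$-action arises from varying the target (the symmetric monoidal $\infty$-category $\Cat_{(\infty,1)}$). Since these are independent arguments of the bifunctor $(\cO, \cU) \mapsto \Alg_\cO(\cU)$, no separate higher coherences between the two actions must be constructed — the product action is produced formally from the bifunctoriality.
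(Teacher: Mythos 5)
Your proposal is correct and follows essentially the same route as the paper, which treats this as an observation justified by exactly the reasoning you spell out: the order-reversal involution preserves products, hence acts through symmetric monoidal automorphisms of the Cartesian structure on $\Cat_{(\infty,1)}$, and the product action then falls out of the bifunctoriality of $(\cO,\cU)\mapsto \Alg_{\cO}(\cU)$ in the operad variable and the symmetric monoidal target. Your closing remark that no extra higher coherences need to be constructed, since the two actions vary independent arguments of this bifunctor, is precisely the point the paper leaves implicit.
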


\begin{observation}
\label{t16}
Let $\cR\in \Alg_{{n-1}}( \Cat_{(\infty,1)} )$ be an $(n-1)$-monoidal $(\infty,1)$-category.
Note that $\cR$ is rigid if and only if its 1-categorical opposite $\cR^{\op_1}$ is rigid if and only if its underlying monoidal opposite $\cR^{{\sf rev}}$ is rigid.  
Using that the standard homomorphism $\sO(1) \times \sO(1) \to \sO(n-1)\times \sO(1)$ is surjective on path-components, it follows that the action~(\ref{e10}) restricts as an action
\begin{equation}
\label{e11}
\sO(n-1)\times \sO(1)
~\lacts~
\Alg^{\rigid}_{{n-1}}( \Cat_{(\infty,1)} )
\end{equation}
on the $\infty$-category of $(n-1)$-monoidal $(\infty,1)$-categories with duals.

\end{observation}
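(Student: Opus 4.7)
The plan is to reduce the closure statement to a finite check on representatives of path-components of $\sO(n-1)\times \sO(1)$. Because the inclusion $\Alg^{\rigid}_{{n-1}}(\Cat_{(\infty,1)}) \subset \Alg_{{n-1}}(\Cat_{(\infty,1)})$ is a full $\infty$-subcategory cut out by an equivalence-invariant condition on objects, verifying that the action~(\ref{e10}) restricts along this inclusion amounts to verifying that, for each $g \in \sO(n-1)\times \sO(1)$ and each rigid $\cR$, the object $g\cdot \cR$ is again rigid. Since $g\cdot(-)$ is an auto-equivalence depending continuously on $g$, the equivalence class of $g\cdot\cR$ is unchanged by deforming $g$ within its path-component; hence it is enough to check on one representative per class in $\pi_0\bigl(\sO(n-1)\times\sO(1)\bigr)$.

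As the statement itself notes, the standard homomorphism $\sO(1)\times \sO(1)\to \sO(n-1)\times \sO(1)$ is surjective on $\pi_0$, so two generating classes suffice. The class coming from the second factor $\sO(1)$ acts (by Observation~\ref{t5}) as $\cR\mapsto \cR^{\op_1}$. The class coming from the first factor, after embedding $\sO(1)\hookrightarrow \sO(n-1)$ as reflection through a hyperplane perpendicular to some $v\in \SS^{n-2}$, acts (by Remark~\ref{r3}(3)) as $\cR\mapsto \cR^{{\sf rev}_v}$, the monoidal-opposite of the $v$-underlying monoidal structure.

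To conclude, it remains to check that each of these two operations preserves rigidity. By Observation~\ref{t15}, rigidity is equivalent to the symmetric condition that every object admit both a left and a right dual. Taking 1-categorical opposites interchanges left adjoints with right adjoints---hence interchanges left duals with right duals---and so preserves the two-sided condition; the same argument shows that taking monoidal opposites preserves rigidity of a single monoidal $(\infty,1)$-category. Invoking Observation~\ref{t22} to test $(n-1)$-monoidal rigidity on the $v$-underlying monoidal structure then yields that $\cR\mapsto \cR^{{\sf rev}_v}$ preserves rigidity of $\cR$ as an $(n-1)$-monoidal $(\infty,1)$-category. The only mildly subtle point---really the main obstacle---is the $\infty$-categorical bookkeeping that justifies the reduction to $\pi_0$, namely that closure of a full $\infty$-subcategory under a continuous group action is detected on homotopy equivalence classes; the remaining content is formal and cited from earlier observations in this section.
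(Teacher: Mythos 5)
Your proposal is correct and follows essentially the same route as the paper: the paper's own (terse) justification is precisely that rigidity is an equivalence-invariant property preserved by $(-)^{\op_1}$ and $(-)^{{\sf rev}}$, and that surjectivity of $\sO(1)\times\sO(1)\to\pi_0\bigl(\sO(n-1)\times\sO(1)\bigr)$ reduces the closure check to these two generators. Your additional appeals to Observation~\ref{t15} (two-sided duals) and Observation~\ref{t22} (detecting rigidity on an underlying monoidal structure) just make explicit what the paper leaves implicit.
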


\section{Projective colimits}

\subsection{A universal null-homotopy}

In this section, we prove the following.
\begin{lemma}
\label{t27}
There is a canonically commutative diagram among pointed spaces:
\begin{equation}
\label{e34}
\xymatrix{
\RR\PP^{n-1}
\ar[d]
\ar[rr]
&&
\RP^{n-2}_+\wedge\BO(1)
\ar[d]
\\
\ast
\ar[rr]
&&
\SS^{n-2}_+ \underset{\sO(1)}\wedge {\sf BU}(1)
~.
}
\end{equation}
\end{lemma}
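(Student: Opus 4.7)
My approach is to realize each corner of the square as a Thom space of a natural real vector bundle, identify the horizontal and vertical maps as Thomifications of standard bundle constructions, and then produce the null-homotopy from a concrete geometric trivialization.

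First, I would identify $\RR\PP^{n-1}$ as the Thom space ${\sf Th}(\gamma_1 \to \RR\PP^{n-2})$ of the tautological real line bundle $\gamma_1$, so that the standard cofiber sequence $\SS^{n-2}\to \RR\PP^{n-2}\to \RR\PP^{n-1}$ appears as the Thom sequence. Similarly, I would regard $\BO(1)$ and $\BU(1)$ as Thom spaces of the universal real and complex line bundles $\gamma^{\sO}_{\rm univ}$ and $\gamma^{\sU}_{\rm univ}$, with $\sO(1)$ acting on $\BU(1)$ by complex conjugation (lifted to the universal bundle). Under these identifications, $\RR\PP^{n-2}_+\wedge \BO(1)$ becomes the Thom space of the pulled-back line bundle over $\RR\PP^{n-2}\times \BO(1)$, while $\SS^{n-2}_+ \underset{\sO(1)}\wedge \BU(1)$ is the Thom space of the rank-$2$ bundle $\SS^{n-2}\underset{\sO(1)}\times \gamma^{\sU}_{\rm univ}$ over the Borel construction $\SS^{n-2}\underset{\sO(1)}\times \BU(1)$.

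Second, I would describe the two maps as follows. The top horizontal is the Thomification of the classifying bundle map of $\gamma_1\to \RR\PP^{n-2}$, namely the composite of the Thom diagonal ${\sf Th}(\gamma_1)\to \RR\PP^{n-2}_+\wedge {\sf Th}(\gamma_1)$ with the universal classifying map into $\BO(1)$. For the right vertical, I would use the canonical identification $\BO(1)\simeq \BU(1)^{\sO(1)}$ of $\BO(1)$ as the complex-conjugation fixed locus: the inclusion of fixed points $\BO(1)\hookrightarrow \BU(1)$ is $\sO(1)$-equivariant (with trivial source action), so smashing with $\SS^{n-2}_+$ and passing to $\sO(1)$-orbits yields the required map.

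Third, I would produce the null-homotopy from the canonical trivialization of the pullback of $\gamma_1$ along the double cover $p\colon \SS^{n-2}\to \RR\PP^{n-2}$. The bundle $p^*\gamma_1 \simeq \SS^{n-2}\times \RR$ is canonically trivialized via $(v,t)\mapsto (v,tv)$; complexifying this yields a canonical trivialization of $p^*(\gamma_1\otimes_\RR \CC)$ as a complex line bundle. Together with a careful accounting of its equivariance (in the trivialization the $\sO(1)$-action takes the form $(v,z)\mapsto (-v,-z)$), this produces a canonical null-homotopy of the classifying map of the $\sO(1)$-equivariant complex line bundle $p^*(\gamma_1\otimes_\RR\CC)$, and hence of the composite Thom space map $\RR\PP^{n-1}\to \SS^{n-2}_+\underset{\sO(1)}\wedge \BU(1)$.

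\emph{The main obstacle} will be to carry out the preceding equivariant analysis correctly: one must track how the sign introduced by the trivialization interacts with the complex-conjugation action on $\BU(1)$, and verify that the resulting null-homotopy descends unambiguously through the Borel construction to yield a canonical commutative square as in~\eqref{e34}.
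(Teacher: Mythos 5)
Your identification of the four corners as Thom spaces and of the two maps as Thomifications agrees with the paper (compare Observations~\ref{t38} and~\ref{t39}), but the mechanism you propose for the crucial null-homotopy has a genuine gap. You derive it from the trivialization of $p^{\ast}(\gamma_1\otimes_{\RR}\CC)$ over $\SS^{n-2}$, i.e.\ from a null-homotopy of a \emph{classifying map} out of the base, and then assert (``\dots and hence of the composite Thom space map'') that this null-homotopes the map of \emph{Thom spaces}. That implication is false in general: the Thomified classifying map of even a trivialized rank-$k$ bundle, $X_+\wedge \SS^k \to {\sf Thom}(\gamma^{\rm univ})$, is not null, since it retracts onto the bottom cell of the universal Thom space. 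Moreover, the bundle whose classifying map is actually in play is $\gamma_1\otimes\CC$ over $\RR\PP^{n-2}$ (your equivariant bundle $(v,z)\mapsto(-v,-z)$ over $\SS^{n-2}$ has Borel quotient exactly this), and for $n\geq 4$ it is \emph{not} trivial: its first Chern class is the Bockstein of $w_1(\gamma_1)$, the nonzero class in $H^2(\RR\PP^{n-2};\ZZ)\cong\ZZ_{/2\ZZ}$. Note also that your lift $(v,z)\mapsto(-v,-z)$ is complex-linear, whereas the $\sO(1)$-action on ${\sf BU}(1)$ appearing in the target of~(\ref{e34}) is $\sB$ of inversion, i.e.\ complex conjugation; the equivariance you need is conjugate-linear.

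The null-homotopy in fact lives in the fiber (Thom) direction, not the base direction. Fiberwise over $L\in\RR\PP^{n-2}$, the composite is the pointed map $L^+\simeq \sB\ZZ\to\BO(1)\to{\sf BU}(1)$, which is $\sB$ applied to $\ZZ\to\sO(1)\to\sU(1)$; its canonical null-homotopy is the path $t\mapsto e^{\pi i t}$ from $1$ to $-1$ in $\sU(1)$ --- equivalently, the canonical bounding hemisphere for the compactified equatorial inclusion $L^+\hookrightarrow (L\otimes\CC)^+$, selected by the tautological orientation of $p^{\ast}\gamma_1$ and exchanged both by the antipode and by conjugation. The paper isolates precisely this as Observation~\ref{t28}, a $2$-cell in commutative continuous groups witnessing that $\ZZ\to\sO(1)\to\sU(1)$ is null equivariantly for inversion, and then transports it through the functor~(\ref{e36}) (deloop, restrict along $\RR\PP^{n-2}\to\BO(1)$, take colimits), so that the equivariant descent you flag as ``the main obstacle'' is handled by functoriality rather than by hand. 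Your trivialization of $p^{\ast}\gamma_1$ does appear in the paper's proof, but only to identify the bottom-right corner as $\SS^{n-2}_+\underset{\sO(1)}\wedge{\sf BU}(1)$; to repair your argument you would need to replace ``trivialization of the pulled-back line bundle'' by ``coherent fiberwise bounding of $L^+\hookrightarrow(L\otimes\CC)^+$,'' tracked against the conjugation twisting of the target.
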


Our strategy is to obtain this square diagram~(\ref{e34}) as a colimit indexed by $\RR\PP^{n-2}$ of square diagrams.
Specifically, we obtain~(\ref{e34}) through the following sequence of functors between $\infty$-categories from a square diagram in the $\infty$-category ${\sf CGroups}$ of commutative continuous groups:
\begin{eqnarray}
\label{e36}
{\sf CGroups}
&
\xra{~\rm forget~}
&
\Mod_{\sO(1)}({\sf Groups})
\\
\nonumber
&
\xra{~\Mod_{\sO(1)}(\sB)~}
&
\Mod_{\sO(1)}( \Spaces^{\ast/})
~=~
\Fun\bigl(
\BO(1)
,
\Spaces^{\ast/}
\bigr)
\\
\nonumber
&
\xra{
\gamma_1(n-1)^\ast
}
&
\Fun\bigl(
\RR\PP^{n-2}
,
\Spaces^{\ast/}
\bigr)
\\
\nonumber
&
\xra{~\colim~}
&
\Spaces^{\ast/}
~.
\end{eqnarray}
The first forgetful functor assigns to each commutative group its underlying group as it is endowed with an involution given by taking inverses: 
$G\mapsto \bigl( \sO(1) \underset{(-)^{-1}}\lacts G \bigr)$.
The second functor is implemented by taking classifying spaces: 
$\bigl( \sO(1) \lacts G \bigr) \mapsto \bigl( \sO(1) \lacts \sB G\bigr)$.
The third functor is restriction along the functor $\RR\PP^{n-2} \xra{\gamma_1(n-1)} \BO(1)$ classifying the universal line bundle over $\RR\PP^{n-2}$.  
The last functor is given by taking colimits of $\RR\PP^{n-2}$-indexed diagrams.

\begin{observation}
\label{t28}
The path of homomorphisms
\[
[0,1]
\longrightarrow
{\sf Hom}\bigl( \ZZ,\sU(1) \bigr)
~,\qquad
t
\mapsto
(~
n
\mapsto
e^{\pi i n t}
~)
~,
\]
presents an invertible 2-cell witnessing a commutative diagram among commutative continuous groups:
\begin{equation}
\label{e35}
\xymatrix{
\ZZ
\ar[d]\ar[rr]^-{\lag \rm generator \rag}
&& 
\sO(1)
\ar[d]^-{\pm 1 \longmapsto \pm 1}
\\
\ast
\ar[rr]
&&
\sU(1)
.
}
\end{equation}

\end{observation}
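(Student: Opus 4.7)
The plan is to verify directly that the indicated $1$-parameter family of homomorphisms assembles into a continuous path in the relevant mapping space, realizing the claimed invertible $2$-cell in $\mathsf{CGroups}$. First, for each $t \in [0,1]$, I would check that $\phi_t \colon \ZZ \to \sU(1)$ defined by $n \mapsto e^{\pi i n t}$ is a continuous group homomorphism: the homomorphism property reduces to the identity $e^{\pi i (n+m) t} = e^{\pi i n t}\cdot e^{\pi i m t}$, and continuity is automatic since $\ZZ$ is discrete.

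Second, I would identify the mapping space in $\mathsf{CGroups}$ from $\ZZ$ to $\sU(1)$ via evaluation at the generator $1\in \ZZ$, which yields a homeomorphism $\mathrm{Hom}_{\mathsf{CGroups}}(\ZZ, \sU(1)) \simeq \sU(1)$. Under this identification, the family $\{\phi_t\}_{t \in [0,1]}$ corresponds to the path $t \mapsto e^{\pi i t}$ in $\sU(1)$, which is manifestly a continuous path. Since a continuous path in the mapping space of a topologically enriched (or $(\infty,1)$-) category is precisely an invertible $2$-cell between its endpoints, this data produces the desired $2$-cell.

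Third, I would match the endpoints to the two composites around the square~(\ref{e35}). At $t=0$, $\phi_0$ is the trivial homomorphism, which factors uniquely through $\ast$ as the composite $\ZZ \to \ast \to \sU(1)$ along the lower-left corner. At $t=1$, $\phi_1(n) = e^{\pi i n} = (-1)^n$, which visibly factors as $\ZZ \xra{\lag \mathrm{generator} \rag} \sO(1) \hookrightarrow \sU(1)$ along the upper-right corner, where the second map is the standard inclusion $\pm 1 \mapsto \pm 1$.

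There is essentially no obstacle to this verification: once the mapping space in $\mathsf{CGroups}$ is recorded as the usual topological space of continuous homomorphisms, the rest is an explicit calculation with exponentials. The only conceptual point worth emphasizing is that exhibiting a continuous $1$-parameter family of homomorphisms is, tautologically, the same as producing an invertible $2$-cell in $\mathsf{CGroups}$, so writing down the explicit path $\phi$ suffices.
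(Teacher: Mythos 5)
Your verification is correct and is essentially what the paper intends: the Observation is stated without proof precisely because the explicit path $t\mapsto(n\mapsto e^{\pi i nt})$, viewed as a path in $\Hom(\ZZ,\sU(1))\simeq\sU(1)$ from the trivial homomorphism to $n\mapsto(-1)^n$, is itself the witnessing $2$-cell. Your three steps (homomorphism check, identification of the mapping space via evaluation at the generator, and matching of endpoints to the two composites) spell out exactly this content.
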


\begin{proof}[Proof of Lemma~\ref{t27}]

The composite functor~(\ref{e36}) carries the diagram~(\ref{e35}) in ${\sf CGroups}$ to the following diagram in $\Spaces^{\ast/}$:
\begin{equation}
\label{e37}
\xymatrix{
\colim\Bigl(\RR\PP^{n-2}\xra{\gamma} \BO(1) \xra{\lag \sO(1)\underset{\sB(-)^{-1}}\lacts\sB\ZZ  \rag}
\Spaces^{\ast/}
\Bigr)
\ar[r]\ar[d]
&
\colim\Bigl(\RR\PP^{n-2}\xra{\gamma} \BO(1) \xra{ \lag \sO(1)\underset{\sB(-)^{-1}}\lacts\BO(1) \rag}
\Spaces^{\ast/}
\Bigr)
\ar[d]
\\
\colim\Bigl(\RR\PP^{n-2}\xra{\gamma} \BO(1) \xra{ \lag \sO(1)\underset{\sB(-)^{-1}}\lacts \ast \rag}
\Spaces^{\ast/}
\Bigr)
\ar[r]&
\colim\Bigl(\RR\PP^{n-2}\xra{\gamma} \BO(1) \xra{ \lag \sO(1)\underset{\sB(-)^{-1}}\lacts {\sf BU}(1) \rag }
\Spaces^{\ast/}
\Bigr)
.
}
\end{equation}
The proof is complete upon identifying each term in this square of pointed spaces as those in the statement of the lemma.
\begin{itemize}
\item[{\bf Bottom left.}]
The bottom left term is a colimit of a constant functor at the initial object $\ast \in \Spaces^{\ast/}$.
Therefore, the bottom left term uniquely identified as the initial pointed space:
\[
\colim\Bigl(\RR\PP^{n-2}\xra{\gamma} \BO(1) \xra{\sO(1)\underset{\sB(-)^{-1}}\lacts \ast }
\Spaces^{\ast/}
\Bigr)
~\simeq~
\ast
~.
\]

\item[{\bf Top right.}]
Because each element in $\sO(1)$ has order 2, the involution $\sO(1) \lacts \sO(1)$ given by taking inverses is trivial.
Therefore, the involution $\sO(1) \lacts \BO(1)$ is trivial.
Therefore, the top right term is a constant functor at the pointed space $\BO(1)$. 
Therefore, the top right term is the tensor in pointed spaces,
\[
\colim\Bigl(\RR\PP^{n-2}\xra{\gamma} \BO(1) \xra{\sO(1)\underset{\sB(-)^{-1}}\lacts\BO(1) }
\Spaces^{\ast/}
\Bigr)
~\simeq~
\RR\PP^{n-2}
\odot
\BO(1)
~\simeq~
\RR\PP^{n-2}_+
\wedge
\BO(1)
~,
\]
which is given by smash product with $\RR\PP^{n-2}_+$.

\item[{\bf Top left.}]
Recall the standard identification as pointed spaces, $\sB \ZZ \simeq \SS^1 \simeq (\RR^1)^+$, where the last pointed space is the 1-point compactification.  
This identification extends as a standard identification between the functor
$\BO(1) \xra{\lag \sO(1) \underset{\sB(-)^{-1}} \lacts \sB \ZZ \rag} \Spaces^{\ast/}$ and the functor $\BO(1) \xra{L\mapsto L^+} \Spaces^{\ast/}$ whose value on a line is its 1-point compactification.
By definition, the \bit{Thom space} of a line bundle $\xi = (E \to B)$ is the pointed space ${\sf Thom}(\xi) := \colim\bigl( B \xra{\xi} \BO(1) \xra{L\mapsto L^+} \Spaces^{\ast/})$.
Therefore, the top left term can be identified as the Thom space of the universal line bundle over $\RR\PP^{n-2}$, which is $\RR\PP^{n-1}$:
\[
\colim\Bigl(\RR\PP^{n-2}\xra{\gamma} \BO(1) \xra{\sO(1)\underset{\sB(-)^{-1}}\lacts\sB\ZZ }
\Spaces^{\ast/}
\Bigr)
~\simeq~
{\sf Thom}\bigl(\gamma_1(n-1)\bigr)
~\simeq~
\RR\PP^{n-1}
~.
\]

\item[{\bf Bottom right.}]
We explain the following sequences of equivalences among pointed spaces:
\begin{eqnarray*}
\colim\Bigl(\RR\PP^{n-2}\xra{\gamma} \BO(1) \xra{ \lag \sO(1)\underset{\sB(-)^{-1}}\lacts {\sf BU}(1) \rag }
\Spaces^{\ast/}
\Bigr)
&
~\simeq~
&
\\
\colim\Bigl(\SS^{n-2} \xra{\sO(1) \text{\rm -quotient}} \RR\PP^{n-2}\xra{\gamma} \BO(1) \xra{ \lag \sO(1)\underset{\sB(-)^{-1}}\lacts {\sf BU}(1) \rag }
\Spaces^{\ast/}
\Bigr)_{\sO(1)}
&
~\simeq~
&
\\
\colim\Bigl(\SS^{n-2} \to \ast \to \BO(1) \xra{ \lag \sO(1)\underset{\sB(-)^{-1}}\lacts {\sf BU}(1) \rag }
\Spaces^{\ast/}
\Bigr)_{\sO(1)}
&
~\simeq~
&
\\
\colim\Bigl(\SS^{n-2} \to \ast \xra{ \lag {\sf BU}(1) \rag }
\Spaces^{\ast/}
\Bigr)_{\sO(1)}
&
~\simeq~
&
\Bigl(\SS^{n-2} \odot {\sf BU}(1)
\Bigr)_{\sO(1)}
\\
\Bigl(\SS^{n-2}_+ \wedge {\sf BU}(1)
\Bigr)_{\sO(1)}
&
~=~
&
\SS^{n-2}_+
\underset{\sO(1)}\wedge
{\sf BU}(1)
~.
\end{eqnarray*}
The first equivalence is an instance of a rather general situation, as we explain.
Let $\cK\xra{\w{F}} \cS$ be a functor between $\infty$-categories for which the colimit $\colim(\w{F})\in \cS$ exists.
Direct from the universal property of colimits, an action by a continuous group $G\lacts \cK$ determines an action $G \lacts \colim(\w{F})$.
Comparing universal properties of colimits and $G$-coinvariants reveals that a factorization $\w{F}\colon \cK \xra{G \text{\rm -quotient}} \cK_G \xra{F} \cS$ through the coinvariants determines an identification of the coinvariants of this action on the colimit:
\[
\colim(\w{F})_G
\xra{~\simeq~}
\colim(F)
~
\in \cS
~.
\]
The first equivalence follows by applying this to the situation in which 
\begin{itemize}
\item
$( G \lacts \cK ) = \bigl( \sO(1) \lacts \SS^{n-2} \bigr)$ is the sphere with its antipodal action, 

\item
$( \cK  \xra{\w{F}} \cS) = \bigl( \SS^{n-2} \to  \RR\PP^{n-2} \xra{\gamma} \BO(1) \xra{ \lag \sO(1) \lacts {\sf BU}(1) \rag} \Spaces^{\ast/}\bigr)$,

\item 
$( \cK_G \xra{F} \cS) = \bigl( {\SS^{n-2}}_{\sO(1)} \simeq \RR\PP^{n-2} \xra{\gamma} \BO(1) \xra{ \lag \sO(1) \lacts {\sf BU}(1) \rag} \Spaces^{\ast/}\bigr)$.

\end{itemize}
The second equivalence follows from the commutativity of the diagram among spaces:
\[
\xymatrix{
\SS^{n-2}
\ar[rr]
\ar[d]_-{\sO(1) \text{\rm -quotient}}
&&
\ast
\ar[d]
\\
\RR\PP^{n-2}
\ar[rr]^-{\gamma}
&&
\BO(1)
.
}
\]
The third equivalence simply recognizes the constant functor at ${\sf BU}(1)$.
The fourth equivalence recognizes colimits of constant functors as tensoring with classifying spaces of the indexing $\infty$-category, in this case $\SS^{n-2}$.
The fifth equivalence recognizes such tensoring in $\Spaces^{\ast/}$ in terms of smash product.  
The final equivalence follows upon recognizing the $\sO(1)$-action on $\SS^{n-2}_+\wedge {\sf BU}(1)$ as the diagonal $\sO(1)$-action from each factor.

\end{itemize}

\end{proof}

\begin{observation}
\label{t38}
By construction, the pointed map
$
\RR\PP^{n-1}
\to
\RR\PP^{n-2}_+ \wedge \BO(1)
$
in~(\ref{e34}) is the colimit of the $\RR\PP^{n-2}$-indexed diagram of pointed maps:
\[
\sB \ZZ
~\simeq~
L^+
~\simeq~
\PP(L\oplus \RR^{\{n\}})
\xra{~
(L' \subset L\oplus \RR^{\{n\}})
\mapsto
L'
~}
\RP^\infty
~=~
\BO(1)
\qquad
\Bigl(
~
(L \subset \RR^{n-1})\in \RR\PP^{n-2}
~
\Bigr)
~,
\]
which is $\sB$ applied to the homomorphism $\ZZ \xra{\lag -1\rag} \sO(1)$.

\end{observation}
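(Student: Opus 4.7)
The plan is to unpack the colimit construction in the proof of Lemma~\ref{t27} pointwise over $\RR\PP^{n-2}$, and then to verify that the induced pointwise map is, under the stated identifications, the tautological classifying map of projective spaces.

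First I would note that the top-row map of~(\ref{e34}), namely $\RR\PP^{n-1} \to \RR\PP^{n-2}_+\wedge \BO(1)$, arises by construction as the colimit over $\RR\PP^{n-2} \xra{\gamma} \BO(1)$ of the top row of~(\ref{e37}); this top row is, in turn, obtained by applying the composite functor~(\ref{e36}) to the top arrow of diagram~(\ref{e35}), i.e., to $\sB$ applied to the homomorphism $\ZZ \xra{\lag -1 \rag} \sO(1)$. Consequently, at each $L \in \RR\PP^{n-2}$, the pointwise map in question is the map $\sB\ZZ \to \BO(1)$ induced by $\sB\lag -1\rag$, modulo the identifications of source and target used in that proof. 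I would then unwind those identifications: the functor $\BO(1) \xra{\lag \sO(1) \underset{\sB(-)^{-1}}\lacts \sB\ZZ\rag}\Spaces^{\ast/}$ is the one-point compactification functor $L \mapsto L^+$ of the tautological line bundle, giving $\sB\ZZ \simeq L^+$ pointwise, and this is related to $\PP(L \oplus \RR^{\{n\}})$ by the standard pointed homeomorphism sending $v \in L$ to $[v:1]$ and $\infty$ to $[1:0] = L \subset L \oplus \RR^{\{n\}}$.

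Second, I would verify that under these identifications the map $\sB\lag -1\rag \colon \sB\ZZ \to \BO(1)$ becomes the tautological classifying map $(L' \subset L \oplus \RR^{\{n\}}) \mapsto L'$. The basic case is $L = \RR$: the tautological map $\RR\PP^1 = \PP(\RR \oplus \RR) \to \RR\PP^\infty = \BO(1)$ classifies the M\"obius line bundle, representing the generator of $\pi_1 \BO(1) = \sO(1)$; this is precisely the effect of $\sB\lag -1\rag$ on $\pi_1$. Since both pointed maps have domain and codomain that are Eilenberg--MacLane spaces for $\ZZ$ and $\sO(1)$ respectively, and agree on $\pi_1$, they are canonically equivalent as pointed maps.

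The main obstacle is ensuring coherence in $L$: verifying that the two natural transformations of $\BO(1)$-indexed functors $[L\mapsto L^+] \to [L\mapsto \BO(1)]$ --- the one arising from $\sB\lag -1\rag$ and the tautological one --- agree as natural transformations, not merely pointwise over a single basepoint. I would handle this by exhibiting both as $\BO(1)$-equivariant extensions of their common restriction to the fiber over the basepoint of $\BO(1)$, using that a pointed functor out of $\BO(1)$ to pointed spaces is determined by its value at the basepoint together with the $\sO(1) = \Omega\BO(1)$-action on that value. Pulling the resulting equivalence of natural transformations back along $\gamma \colon \RR\PP^{n-2} \to \BO(1)$ and then taking colimits yields the claimed identification.
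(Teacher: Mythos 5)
Your proposal is correct and follows essentially the same route as the paper, which justifies this Observation purely ``by construction'': one unwinds the colimit in the proof of Lemma~\ref{t27}, recognizing the top row of~(\ref{e37}) as the $\RR\PP^{n-2}$-indexed family obtained by applying~(\ref{e36}) to the top arrow of~(\ref{e35}). Your additional verification that $\sB\lag -1\rag$ agrees with the tautological classifying map $(L'\subset L\oplus\RR^{\{n\}})\mapsto L'$ --- via the $\pi_1$ computation and the $\sO(1)$-equivariant coherence argument --- is a correct filling-in of a detail the paper leaves implicit.
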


For $L\subset \RR^{n-1}$ a 1-dimensional vector subspace, consider the orthogonal transformation 
that restricts as reflection in the line $L$ and as the identity in the orthogonal complement $L^{\perp}$: 
\[
{\sf Ref}_L\colon 
\RR^{n-1}
~\cong~ 
L\oplus L^\perp 
\xra{~(-1) \oplus {\sf id}_{L^\perp}~} 
L\oplus L^\perp 
~\cong~ 
\RR^{n-1}
~.
\]
This orthogonal transformation has order 2 and is continuous in the argument $L$, thereby defining a map
\begin{equation}
\label{e40}
\RR\PP^{n-2}
\longrightarrow
{\sf Homo}\bigl(
\sO(1)
,
\sO(n-1)
\times
\sO(1)
\bigr)
~,\qquad
L\longmapsto 
\bigl( -1 \mapsto  ( {\sf Ref}_L , -1 ) \bigr)
~.
\end{equation}

\begin{observation}
\label{t29}
Through the equivalences between spaces of morphisms given, respectively, by the Tensor-Hom-adjunction and the $(\sB,\Omega)$-adjunction,
\begin{eqnarray*}
\Hom_{\Spaces^{\ast/}}\bigl( \RR\PP^{n-2}_+ \wedge \BO(1)  , \BO(n-1) \times \BO(1) \bigr)
&
~\simeq~
&
\\
\Map\Bigl(
\RR\PP^{n-2}
,
\Hom_{\Spaces^{\ast/}}\bigl(\BO(1)  , \BO(n-1) \times \BO(1) \bigr)
\Bigr)
&
~\simeq~
&
\\
\Map\Bigl(
\RR\PP^{n-2}
,
\Hom_{{\sf Groups}}\bigl(\sO(1)  , \sO(n-1) \times \sO(1) \bigr)
\Bigr)
&
&
~,
\end{eqnarray*}
the map~(\ref{e40}) corresponds to the map
\begin{equation}
\label{e39}
\RR\PP^{n-2}_+
\wedge
\BO(1)
\longrightarrow
\BO(n-1) \times \BO(1)
~,\qquad
( L\subset \RR^{n-1} , L')
\longmapsto 
( L^\perp \oplus L' , L' )
~.
\end{equation}
\end{observation}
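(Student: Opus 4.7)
The plan is to trace both sides through the two adjunctions explicitly: I would first work pointwise in $L\in \RR\PP^{n-2}$ using the $(\sB,\Omega)$-adjunction, then globalize using the tensor-hom adjunction.

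Fixing $L\in \RR\PP^{n-2}$, I would identify the pointed map $\BO(1)\to \BO(n-1)\times \BO(1)$ adjoint to the homomorphism $\sO(1)\to \sO(n-1)\times \sO(1)$ sending $-1\mapsto ({\sf Ref}_L,-1)$. The second coordinate is the identity $\BO(1)\to \BO(1)$, classifying the universal line bundle $L'$. For the first coordinate $\sB({\sf Ref}_L)\colon \BO(1)\to \BO(n-1)$, I would use the standard description of $\sB$ on homomorphisms as the associated-bundle construction: it classifies the rank-$(n-1)$ vector bundle ${\sf E}\sO(1)\times_{\sO(1)}\RR^{n-1}$, with $\sO(1)$ acting via ${\sf Ref}_L$. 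The orthogonal decomposition $\RR^{n-1}\cong L\oplus L^\perp$ splits this representation as sign $\oplus$ trivial, and hence splits the associated bundle as $L'\oplus \underline{L^\perp}$, since the Borel construction on the sign representation of $\sO(1)$ is the universal line bundle over $\BO(1)$. Thus the pointed map sends $L'\mapsto (L^\perp \oplus L',\,L')$.

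Next I would globalize over $L$. The fibrewise construction just given is continuous in $L\in \RR\PP^{n-2}$, defining a map $\RR\PP^{n-2}\to \Hom_{\Spaces^{\ast/}}\bigl(\BO(1),\,\BO(n-1)\times \BO(1)\bigr)$. Under the tensor-hom adjunction displayed in the statement, this is the adjoint of a single pointed map $\RR\PP^{n-2}_+\wedge \BO(1)\to \BO(n-1)\times \BO(1)$, namely $(L,L')\mapsto (L^\perp\oplus L',\,L')$, which is exactly (\ref{e39}). On the other side, the same family corresponds under the $(\sB,\Omega)$-adjunction, applied fibrewise, to the map $\RR\PP^{n-2}\to \Hom_{\sf Groups}\bigl(\sO(1),\sO(n-1)\times \sO(1)\bigr)$ of (\ref{e40}), by the very definition of the pointwise adjoint.

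The main subtlety is verifying the identification $\sB({\sf Ref}_L)$ as the classifying map of $L^\perp\oplus L'$: one must match conventions so that the Borel construction on the sign representation of $\sO(1)$ yields the tautological (not dual or suspended) line bundle over $\BO(1)$, and that the trivial summand $L^\perp$ yields a constant classifying map into $\BO(n-2)\subset \BO(n-1)$. Once this standard identification is in hand, the naturality of both adjunctions in $L$ assembles the pointwise data into the claimed correspondence between (\ref{e40}) and (\ref{e39}).
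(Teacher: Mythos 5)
Your proposal is correct and matches the paper's intent: Observation~\ref{t29} is stated without proof, being regarded as an evident unwinding of the two adjunctions, and your argument is exactly that unwinding — in particular the key identification that $\sB({\sf Ref}_L)$ classifies the associated bundle of the representation $L_{\rm sign}\oplus L^\perp_{\rm triv}$, hence $L'\oplus\underline{L^\perp}$, is the right (and standard) way to make the correspondence precise.
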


Observations~\ref{t38} and~\ref{t29} lend the following.
\begin{observation}
\label{t39}
By construction, the composite pointed map $\RP^{n-1} \xra{(\ref{e34})} \RP^{n-2}_+ \wedge \BO(1) \xra{(\ref{e39})} \BO(n-1)\times \BO(1)$ is the $\RP^{n-2}$-indexed diagram of pointed maps
\[
\sB \ZZ
~\simeq~
L^+
~\simeq~
\PP(L\oplus \RR^{\{n\}})
\longrightarrow
\RP^\infty
\longrightarrow
\BO(n-1)\times \BO(1)
\qquad
\Bigl(
~
(L \subset \RR^{n-1})\in \RR\PP^{n-2}
~
\Bigr)
~,
\]
\[
(L' \subset L\oplus \RR^{\{n\}})
\longmapsto
\bigl(
L^\perp \oplus L'
,
L'
\bigr)
~,
\]
which is $\sB$ applied to the homomorphism $\ZZ \xra{\lag( {\sf Ref}_L , -1) \rag } \sO(n-1)\times \sO(1)$.

\end{observation}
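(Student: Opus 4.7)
The plan is to just splice together the two preceding observations. Observation~\ref{t38} presents the first arrow $\RR\PP^{n-1} \to \RR\PP^{n-2}_+ \wedge \BO(1)$ as the colimit, indexed by $L \in \RR\PP^{n-2}$, of the pointed maps $\PP(L \oplus \RR^{\{n\}}) \to \RR\PP^\infty = \BO(1)$ given by $L' \mapsto L'$; on $\pi_1$, this is $\sB$ applied to $\ZZ \xra{\lag -1 \rag} \sO(1)$. Observation~\ref{t29} provides a pointwise formula for the second arrow $\RR\PP^{n-2}_+ \wedge \BO(1) \to \BO(n-1)\times \BO(1)$, namely $(L, L') \mapsto (L^\perp \oplus L' , L')$. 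Since the second arrow is linear in the smash factor $\BO(1)$, composing it with the colimit diagram of Observation~\ref{t38} simply produces, for each $L \in \RR\PP^{n-2}$, the pointed map $\PP(L\oplus \RR^{\{n\}}) \to \BO(n-1)\times \BO(1)$ sending $L' \mapsto (L^\perp \oplus L' , L')$. This is precisely the $\RR\PP^{n-2}$-indexed diagram asserted in the statement.

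It remains to identify this pointed map, for each fixed $L\in \RR\PP^{n-2}$, as $\sB$ applied to the homomorphism $\ZZ \xra{\lag ({\sf Ref}_L , -1) \rag } \sO(n-1)\times \sO(1)$. I would take the basepoint of $\PP(L\oplus \RR^{\{n\}})$ to be $L' = L$, at which the image in $\BO(n-1)\times \BO(1)$ is $(L^\perp \oplus L, L) = (\RR^{n-1} , L)$, the basepoint. As $L'$ traverses the generating loop of $\pi_1 \PP(L\oplus \RR^{\{n\}}) \cong \ZZ$, its monodromy in the $\BO(1)$-factor is $-1\in \sO(1)$ by the standard identification $\sB\ZZ \simeq \BO(1)$. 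In the $\BO(n-1)$-factor, the family $L^\perp \oplus L'$ rotates $L'$ back to $L$ after one loop with monodromy equal to $\id_{L^\perp} \oplus (-1)_L = {\sf Ref}_L$, by the very definition of ${\sf Ref}_L$. Hence the induced homomorphism on $\pi_1$ is $\lag ({\sf Ref}_L , -1) \rag$, and since both source and target are Eilenberg--MacLane spaces $\sB\ZZ$ and $\BO(n-1)\times \BO(1)$ respectively (after $\sB$-delooping the homomorphism), this pointed map is indeed $\sB$ of that homomorphism.

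The only mildly nontrivial step is the monodromy computation in the $\BO(n-1)$-factor, which I view as the main content; but it is a routine check once one observes that the family $L' \mapsto L^\perp \oplus L'$ is the direct sum of the constant bundle $L^\perp$ with the tautological line bundle on $\PP(L\oplus \RR^{\{n\}}) \simeq \sB\ZZ$ whose holonomy around the generator is reflection in $L$. Functoriality of $\sB$ and of colimits in the $\RR\PP^{n-2}$-parameter assembles these pointwise identifications into the claimed diagram globally.
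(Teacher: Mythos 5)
Your proposal is correct and matches the paper's own (implicit) argument: the paper simply records this as an immediate consequence of Observations~\ref{t38} and~\ref{t29}, exactly the splicing you perform. Your explicit monodromy check is a fine way to identify the composite as $\sB$ of $\ZZ \to \sO(n-1)\times\sO(1)$, though one can also just compose the deloopings $\sB(\ZZ \xra{\lag -1\rag} \sO(1))$ from Observation~\ref{t38} with $\sB(\sO(1)\xra{-1\mapsto({\sf Ref}_L,-1)}\sO(n-1)\times\sO(1))$ from Observation~\ref{t29}.
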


Observation~\ref{t39} lends the following.
\begin{observation}
\label{t30}
The composite pointed map fits into the standard pullback among spaces:
\begin{equation}
\label{e41}
\xymatrix{
\sO(n)
\ar[rrrr]
\ar[d]_-{\sO(n-1)\times \sO(1) \text{\rm -quotient}}
&&
&&
\ast
\ar[d]
\\
\RR\PP^{n-1}
\ar[rr]^-{ (\ref{e34}) }
&&
\RR\PP^{n-2}_+ \wedge \BO(1)
\ar[rr]^-{ (\ref{e39}) }
&&
\BO(n-1) \times \BO(1)
.
}
\end{equation}

\end{observation}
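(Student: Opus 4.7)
My plan is to recognize the bottom-row composite in~(\ref{e41}) as the classifying map of a canonical pair of tautological bundles on $\RR\PP^{n-1}$, and then to invoke standard principal-bundle theory. Specifically, viewing the subgroup $\sO(n-1)\times\sO(1)\subset \sO(n)$ as the stabilizer of the line $\RR^{\{n\}}\subset \RR^n$ identifies $\sO(n)/\bigl(\sO(n-1)\times \sO(1)\bigr)\simeq \RR\PP^{n-1}$ and presents $\sO(n)\to \RR\PP^{n-1}$ as a principal $\sO(n-1)\times \sO(1)$-bundle. Its classifying map
\[
\RR\PP^{n-1}\longrightarrow \BO(n-1)\times \BO(1)
~,\qquad
L\longmapsto (L^\perp,L)
~,
\]
fits into a fiber sequence $\sO(n)\to \RR\PP^{n-1}\to \BO(n-1)\times \BO(1)$, which is the desired pullback square of pointed spaces.

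It remains to identify the composite map of~(\ref{e41}) with this classifying map. I would use the description from Observation~\ref{t39}: the composite is a $\RR\PP^{n-2}$-indexed colimit of pointed maps $\sB\ZZ\to \BO(n-1)\times \BO(1)$, each being $\sB$ applied to the homomorphism $\ZZ\xrightarrow{\lag({\sf Ref}_L,-1)\rag}\sO(n-1)\times \sO(1)$ indexed by $L\in \RR\PP^{n-2}$. On the other side, the classifying map of $(L^\perp,L)$ restricted along the inclusion $\PP(L\oplus \RR^{\{n\}})\simeq \sB\ZZ\hookrightarrow \RR\PP^{n-1}$ is determined by the monodromy of the pair around the generating loop of $\sB\ZZ$. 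This monodromy is induced by a half-rotation in the plane $L\oplus \RR^{\{n\}}\subset \RR^n$: on the splitting $\RR^n=L^\perp\oplus L\oplus \RR^{\{n\}}$, it fixes $L^\perp$ and negates both $L$ and $\RR^{\{n\}}$. This is precisely the element $({\sf Ref}_L,-1)\in \sO(n-1)\times \sO(1)$.

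Since $\RR\PP^{n-1}$ is presented as the colimit over $\RR\PP^{n-2}$ of the fibers $\PP(L\oplus \RR^{\{n\}})$ (the Thom-space presentation used in the proof of Lemma~\ref{t27}), the fiberwise agreement of classifying maps assembles---by naturality of classifying maps under colimits---into a global identification of the two pointed maps $\RR\PP^{n-1}\to \BO(n-1)\times\BO(1)$. Pulling back the universal $\sO(n-1)\times \sO(1)$-bundle $\ast\to \BO(n-1)\times \BO(1)$ along this map then yields the principal bundle $\sO(n)\to \RR\PP^{n-1}$, which is exactly the pullback square~(\ref{e41}).

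The main obstacle is the fiberwise monodromy identification in the second paragraph: pinning down the action of the half-rotation on the orthogonal splitting of $\RR^n$, including basepoint considerations, in a way compatible with the pointed colimit presentation. Once this geometric identification is verified, the rest is standard homotopy-theoretic bookkeeping with colimits of classifying maps.
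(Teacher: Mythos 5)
Your proposal is correct and takes essentially the same route as the paper: the paper records this statement as an observation ``lent'' by Observation~\ref{t39}, and in the proof of Theorem~\ref{t19} it likewise identifies $\sO(n)_{\sO(n-1)\times\sO(1)}\simeq \RR\PP^{n-1}$ via orbit--stabilizer with canonical map $L\mapsto(L^\perp,L)$ to $\BO(n-1)\times\BO(1)$. Your fiberwise monodromy computation over $\RR\PP^{n-2}$, matching $({\sf Ref}_L,-1)$ against the tautological pair, is exactly the (unspelled-out) content by which Observation~\ref{t39} yields the pullback square.
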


\subsection{The $n=2$ case in projective families}

\begin{lemma}
\label{prop.key}
There is a canonical filler in the diagram among pointed spaces:
\begin{equation}
\label{e12}
\xymatrix{
\RR\PP^{n-2}_+ \wedge \BO(1)
\ar[d]_-{ (\ref{e34}) }
\ar[rr]^-{ (\ref{e39}) }
&&
\BO(n-1)\times \BO(1)
\ar[d]^-{\Bigl\lag \sO(n-1)\times \sO(1)
\underset{(\ref{e11})} \lacts
\Alg^{\rigid}_{{n-1}}( \Cat_{(\infty,1)} )
\Bigr\rag}
\\
\SS^{n-2}_+ \underset{\sO(1)} \wedge {\sf BU}(1)
\ar@{-->}[rr]
&&
{\sf BAut}_{\CAT}\Bigl(
\Alg_{{n-1}}^{\rigid}( \Cat_{(\infty,1)} )
\Bigr)
.
}
\end{equation}

\end{lemma}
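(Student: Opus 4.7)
The plan is to construct the dashed map as an $\RR\PP^{n-2}$-parametrized application of Theorem~\ref{t11}, and then verify commutativity by recognizing both composites $\BO(1) \to {\sf BAut}_{\CAT}\bigl(\Alg^{\rigid}_{{n-1}}(\Cat_{(\infty,1)})\bigr)$ as the restriction, along the common homomorphism $\sO(1) = \{\pm 1\} \to \sO(2)$, $-1 \mapsto -\id$, of a single $\BO(2)$-valued map supplied by Theorem~\ref{t11}.

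For the construction, fix each $v \in \SS^{n-2}$. The orientation of $L_v := \RR \cdot v$ determined by $v$, together with a fixed orientation of $\RR^{\{n\}}$, equips $L_v \oplus \RR^{\{n\}}$ with a canonical complex structure. Applying Dunn additivity (Remark~\ref{r3}(2)) to the orthogonal decomposition $\RR^{n-1} = L_v^\perp \oplus L_v$ yields an equivalence
\[
\Alg^{\rigid}_{{n-1}}(\Cat_{(\infty,1)})
~\simeq~
\Alg^{\rigid}\bigl(\Alg_{L_v^\perp}(\Cat_{(\infty,1)})\bigr)
~,
\]
to which Theorem~\ref{t11}, applied with $\cU := \Alg_{L_v^\perp}(\Cat_{(\infty,1)})$, attaches an action $\sO(L_v \oplus \RR^{\{n\}}) \simeq \sO(2) \lacts \Alg^{\rigid}_{{n-1}}(\Cat_{(\infty,1)})$ extending, along the canonical embedding $\sO(L_v) \times \sO(\RR^{\{n\}}) \hookrightarrow \sO(n-1) \times \sO(1)$, the restriction of the action~(\ref{e11}). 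Restricting this $\sO(2)$-action to the connected subgroup $\sU(L_v \oplus \RR^{\{n\}}) = {\sf SO}(L_v \oplus \RR^{\{n\}})$ and delooping produces a pointed map $\BU(1) \to {\sf BAut}_{\CAT}\bigl(\Alg^{\rigid}_{{n-1}}(\Cat_{(\infty,1)})\bigr)$, continuous in $v$ by functoriality of Dunn additivity and Theorem~\ref{t11}. The antipode $v \mapsto -v$ reverses the chosen orientation on $L_v \oplus \RR^{\{n\}}$, hence conjugates the complex structure and induces complex conjugation on $\sU(1)$; this matches exactly the $\sO(1)$-action on $\BU(1)$ appearing in the smash product $\SS^{n-2}_+ \underset{\sO(1)} \wedge \BU(1)$ in the proof of Lemma~\ref{t27}, so the parametrized family descends to the sought map on the $\sO(1)$-coinvariants.

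For the commutativity, I invoke the colimit description from the proof of Lemma~\ref{t27} together with Observation~\ref{t29}, which presents the entire square~(\ref{e12}) as an $\RR\PP^{n-2}$-indexed colimit of squares. For each $L \in \RR\PP^{n-2}$, the task reduces to comparing two homomorphisms $\sO(1) \to \sO(L \oplus \RR^{\{n\}})$: the first, coming from~(\ref{e39}) through $-1 \mapsto ({\sf Ref}_L, -1) \in \sO(n-1) \times \sO(1)$, identifies (via the splittings $\RR^{n-1} = L^\perp \oplus L$ and $\sO(L) \times \sO(\RR^{\{n\}}) \hookrightarrow \sO(L \oplus \RR^{\{n\}})$) with the antipode $-1 \mapsto -\id$; the second, coming from $\sO(1) = \{\pm 1\} \hookrightarrow \sU(L \oplus \RR^{\{n\}})$, sends $-1$ to rotation by $\pi$, which is likewise $-\id$. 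These two homomorphisms agree on the nose, so both composites factor through the common $\sO(L \oplus \RR^{\{n\}})$-action of Theorem~\ref{t11}, supplying the desired homotopy of composites. The main obstacle is not the pointwise argument but its coherent parametrization: verifying that the Dunn-additivity splittings, orientation choices, and $\sO(2)$-extensions assemble into a single coherent $\RR\PP^{n-2}$-indexed diagram that descends $\sO(1)$-equivariantly to $\SS^{n-2}_+ \underset{\sO(1)} \wedge \BU(1)$. This is handled by careful bookkeeping of the functoriality of Theorem~\ref{t11} and of Dunn additivity in the varying symmetric monoidal $\infty$-category $\Alg_{L_v^\perp}(\Cat_{(\infty,1)})$.
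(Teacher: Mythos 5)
Your strategy is the paper's: both proofs produce the filler by applying the $n=2$ result (Theorem~\ref{t11}) fiberwise over $\RR\PP^{n-2}$ after using Dunn additivity to split off the line $L$, and both verify commutativity by observing that $({\sf Ref}_L,-1)$ corresponds to $-\id \in \sO(L\oplus\RR^{\{n\}})$, i.e.\ to the image of the generator under $\sO(1)\to\sU(1)\subset\sO(2)$. That pointwise comparison is correct. There are, however, two concrete gaps in your write-up.

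First, you invoke ``Theorem~\ref{t11}, applied with $\cU := \Alg_{L_v^\perp}(\Cat_{(\infty,1)})$.'' Theorem~\ref{t11} is stated only for $\cU = \Cat_{(\infty,1)}$ (it is a citation to a paper in preparation, so you cannot silently generalize it); moreover, for a general $\cU$ the second $\sO(1)$-factor of the action~(\ref{e11}) (the $1$-categorical opposite) is not even defined, so the generalized statement you need is not merely unproved but not yet formulated. The paper routes around this: it applies the stated Theorem~\ref{t11} to get $\sO(2)\lacts \Alg^{\rigid}(\Cat_{(\infty,1)})$, upgrades this to an action by \emph{symmetric monoidal} automorphisms using that the symmetric monoidal structure on $\Cat_{(\infty,1)}$ is Cartesian (so every automorphism of the underlying $\infty$-category preserves products, hence is canonically symmetric monoidal), and only then transports the action through the functor $\Alg_{L^\perp}(-)$ together with the Dunn-additivity equivalence $\Alg_{L^\perp}\bigl(\Alg_L^{\rigid}(\Cat_{(\infty,1)})\bigr)\simeq \Alg_{n-1}^{\rigid}(\Cat_{(\infty,1)})$. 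You should restructure your construction this way; the symmetric-monoidality upgrade is the step your version omits.

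Second, you correctly identify that ``the main obstacle is not the pointwise argument but its coherent parametrization,'' and then dispose of it with ``careful bookkeeping.'' That coherence \emph{is} the content of the lemma --- the pointwise statement over each $L$ is easy --- so deferring it leaves the proof incomplete. The paper's mechanism is worth internalizing: it organizes the pointwise squares as a single functor $\RR\PP^{n-2}\to\Fun(c_1\times c_1,\Spaces^{\ast/})$ in which the right-hand column is constant in $L$ (everything is pulled back from $\BO(n-1)\times\BO(1)$ and ${\sf BAut}_{\CAT}(\Alg_{n-1}^{\rigid}(\Cat_{(\infty,1)}))$), regards the concatenated square as a square of $\Omega\RR\PP^{n-2}$-pointed spaces with trivial action on the right column, and then concludes formally that the left vertical map factors through its $\Omega\RR\PP^{n-2}$-coinvariants --- which Lemma~\ref{t27} has already identified as $\RR\PP^{n-2}_+\wedge\BO(1)\to\SS^{n-2}_+\underset{\sO(1)}\wedge{\sf BU}(1)$. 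Replacing your appeal to bookkeeping with this coinvariants argument (and working over $\RR\PP^{n-2}$ directly rather than over $\SS^{n-2}$ with a subsequent $\sO(1)$-descent of orientation data) closes the gap.
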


\begin{proof}

We first explain the commutative diagram among $\sO(1)$-pointed spaces:
\begin{equation}
\label{e46}
\xymatrix{
\BO(1)
\ar[d]_-{\pm 1 \mapsto \pm 1}
\ar[rr]^-{\sO(1) \text{\rm - invariant}}
&&
\BO(1)\times \BO(1)
\ar[d]^-{\oplus}
\\
{\sf BU}(1)
\ar[rr]^-{\sO(1) \text{\rm - invariant}}
&&
\BO(2)
.
}
\end{equation}
Regard the two terms on the right as $\sO(1)$-pointed spaces with the trivial $\sO(1)$-action;
regard the two spaces on the left as $\sO(1)$-pointed spaces with the $\sO(1)$-action given by $\sB(-)^{-1}$.  
With these $\sO(1)$-actions, the vertical maps are evidently $\sO(1)$-equivariant.
As indicated, the horizontal arrows in fact witness (unpointed) $\sO(1)$-coinvariants:
\[
\BO(1)_{\sO(1)}
~\simeq~
\sB\bigl(
\sO(1) \rtimes \sO(1)
\bigr)
~\simeq~
\BO(1) \times \BO(1)
\]
and
\[
{\sf BU}(1)_{\sO(1)}
~\simeq~
\sB\bigl(
\sU(1) \rtimes \sO(1)
\bigr)
~\simeq~
\BO(2)
~.
\]
Commutativity of the diagram then follows.

We next explain the commutative diagram among pointed spaces:
\begin{equation}
\label{e21}
\xymatrix{
\BO(1)\times \BO(1)
\ar[rrrr]^-{\Bigl\lag \sO(1)\times \sO(1)
\underset{(\ref{e11})} \lacts
\Alg^{\rigid}_{{n-1}}( \Cat_{(\infty,1)} )
\Bigr\rag}
\ar[d]_-{\oplus}
&&
&&
{\sf BAut}_{\CAT}\Bigl(
\Alg^{\rigid}( \Cat_{(\infty,1)} )
\Bigr)
\\
\BO(2)
\ar@{-->}[rrrr]_-{\exists !}
\ar@{-->}[urrrr]^-{\rm Thm~\ref{t11}}
&&
&&
{\sf BAut}_{\CAlg(\CAT)}\Bigl(
\Alg^{\rigid}( \Cat_{(\infty,1)} )
\Bigr)
\ar[u]_-{\rm forget}^-{\simeq}
.
}
\end{equation}
The top horizontal pointed map selects the action of Observation~\ref{t16}.
The diagonal filler is Theorem~\ref{t11}, as indicated.
Now, recall that the symmetric monoidal structure on $\Cat_{(\infty,1)}$, and therefore on $\Alg^{\rigid}( \Cat_{(\infty,1)} )$, is Cartesian.
Therefore, each automorphism of $\Alg^{\rigid}( \Cat_{(\infty,1)} )$ preserves finite products.
It follows that each automorphism of the $\infty$-category $\Alg^{\rigid}( \Cat_{(\infty,1)} )$ uniquely extends as a symmetric monoidal automorphism.
This is to say that the right vertical forgetful functor is an equivalence.
We conclude the bottom horizontal filler.

We now establish the commutative square among $\Omega \RR\PP^{n-2}$-pointed spaces, which is to say, the functor $\RR\PP^{n-2} \to \Fun\bigl( c_1\times c_1 , \Spaces^{\ast/})$:
for $L\in \RR\PP^{n-2}$,
\begin{equation}
\label{e27}
\xymatrix{
\BO(L) \times \BO(1)
\ar[rr]^-{\sB\lag {\sf Ref}_L \rag \times {\sf id}}
\ar[d]_-{\Bigl\lag \sO(L)\times \sO(1)
\underset{(\ref{e11})} \lacts
\Alg_{{ }}^{\rigid}( \Cat_{(\infty,1)} )
\Bigr\rag}
&&
\BO(n-1) \times \BO(1)
\ar[d]^-{\Bigl\lag \sO(n-1)\times \sO(1)
\underset{(\ref{e11})} \lacts
\Alg_{{n-1}}^{\rigid}( \Cat_{(\infty,1)} )
\Bigr\rag}
\\
{\sf BAut}_{\CAlg(\CAT)}\Bigl(
\Alg_{{L}}^{\rigid}( \Cat_{(\infty,1)} ) 
\Bigr)
\ar[rr]_-{\Alg_{{L^{\perp}}}(-)}
&&
{\sf BAut}_{\CAT}\Bigl(
\Alg_{{n-1}}^{\rigid}( \Cat_{(\infty,1)} ) 
\Bigr)
.
}
\end{equation}
The lower left term in this diagram~(\ref{e27}) is the composite functor 
\[
\RR\PP^{n-2} 
\longrightarrow
\RR\PP^{\infty}
~\simeq~
\BO(1) \xra{~\bigl \lag \sO(1) \underset{\rm Obs~\ref{t2}}\lacts \Alg^{\rigid}(\Cat_{(\infty,1)}) \bigr \rag~} 
{\sf BAut}_{\CAlg(\CAT)}\Bigl(
\Alg_{{ }}^{\rigid}( \Cat_{(\infty,1)} ) 
\Bigr)
~.
\]
The each of the two right terms in this diagram~(\ref{e27}) is independent of $L\in \RR\PP^{n-2}$.
The right vertical pointed map classifies the action~(\ref{e11}), as indicated, and is independent of $L\in \RR\PP^{n-2}$.
The group $\sO(L)$ is that of orthogonal transormations of the 1-dimensional vector subspace $L\subset \RR^{n-1}$;
there is a unique isomorphism $\sO(L) \simeq \sO(1)$ between groups.
The left vertical map can be achieved through the filled diagram~(\ref{e21}).  
The top horizontal pointed map is the result of applying the functor ${\sf Groups} \xra{\sB} \Spaces^{\ast/}$ to the product of the homomorphisms $\sO(L) \xra{{\sf Ref}_L} \sO(n-1)$ and $\sO(1) \xra{ \id} \sO(1)$.
It remains to establish the bottom horizontal pointed map.
Dunn's additivity (\cite{dunn}, Theorem~5.1.2.2  of~\cite{HA}), grants that the morphism between $\infty$-operads, $\cE_{n-2} \ot \cE_{1} \xra{\simeq} \cE_{n-1}$, is an equivalence.
This equivalence is evidently equivariant with respect to the morphism $\sO(n-2)\times \sO(1) \xra{\oplus} \sO(n-1)$ between continuous groups.
The canonical commutative square among pointed spaces
\[
\xymatrix{
\RR\PP^{n-2} 
\ar[rr]
\ar[d]
&&
\ast
\ar[d]
\\
\BO(n-2)\times \BO(1)
\ar[rr]^-{\oplus}
&&
\BO(n-1)
~,
}
\]
then determines an equivalence from the composite functor 
\[
\Bigl(
\RR^{n-2} \to \BO(n-2)\times \BO(1) \xra{\bigl \lag \sO(n-2)\times \sO(1) \underset{\rm Obs~\ref{t2}}\lacts \Alg_{n-2}\bigl( \Alg_{{}}^{\rigid}(\Cat_{(\infty,1)})\bigr) \bigr\rag }  \CAT
\Bigr)
~\simeq~
\Bigl(
\RR\PP^{n-2} \xra{!}\ast \xra{\bigl \lag \Alg_{n-1}^{\rigid}(\Cat_{(\infty,1)}) \bigr\rag} \CAT
\Bigr)
~:
\]
\[
\Alg_{L^{\perp}}\bigl( \Alg_L^{\rigid}(\Cat_{(\infty,1)}) \bigr)
~\simeq~
\Alg_{L^{\perp} \oplus L}^{\rigid}(\Cat_{(\infty,1)})
~\simeq~
\Alg_{\RR^{n-1}}^{\rigid}(\Cat_{(\infty,1)})
~\simeq~
\Alg_{n-1}^{\rigid}(\Cat_{(\infty,1)})
~,\qquad
\bigl(
~L \in \RR\PP^{n-2}
~\bigr)
~.
\]
This explains the lower horizontal pointed map in the diagram~(\ref{e27}), as well as commutativity of the diagram~(\ref{e27}), as it depends on $L\in \RR\PP^{n-2}$.

Now, by way of the morphisms $\Omega \RR\PP^{n-2} \to \sO(1) \to \ast$ among continuous groups, regard each of the diagrams~(\ref{e46}),~(\ref{e21}), and~(\ref{e27}) as a commutative square among $\Omega \RR\PP^{n-2}$-pointed spaces.
Concatenating, from left to right, the commutative squares~(\ref{e46})$+$(\ref{e21})$+$(\ref{e27}) results in a commutative diagram of $\Omega \RR\PP^{n-2}$-pointed spaces:
\begin{equation}
\label{e47}
\xymatrix{
\BO(1) 
\ar[r]
\ar[d]
&
\BO(1) \times \BO(1)
\ar[r]
\ar[d]
&
\BO(1) \times \BO(1)
\ar[r]
\ar[d]
&
\BO(n-1) \times \BO(1)
\ar[d]
\\
{\sf BU}(1) 
\ar[r]
&
\BO(2)
\ar[r]
&
{\sf BAut}_{\CAlg(\CAT)}\Bigl(
\Alg^{\rigid}_{{ }}( \Cat_{(\infty,1)} ) 
\Bigr)
\ar[r]
&
{\sf BAut}_{\CAT}\Bigl(
\Alg_{{n-1}}^{\rigid}( \Cat_{(\infty,1)} ) 
\Bigr)
.
}
\end{equation}
Consider the outer square from~(\ref{e47}) of $\Omega \RR\PP^{n-2}$-pointed spaces.
By construction, the right vertical pointed map in~(\ref{e47}) is a trivial $\Omega \RR\PP^{n-2}$-pointed map.  
So the left vertical pointed map in~(\ref{e47}) canonically factors through its $\Omega \RR\PP^{n-2}$-coinvariants, which are identified in the proof of Lemma~\ref{t27}:
\begin{equation}
\label{e48}
\xymatrix{
\RR\PP^{n-2}_+ \wedge \BO(1)
\ar[rr]^-{\simeq}
\ar[d]_-{(\ref{e34})}
&&
\BO(1)_{\Omega \RR\PP^{n-2}}
\ar[rr]
\ar[d]
&&
\BO(n-1) \times \BO(1)
\ar[d]^-{\Bigl\lag \sO(n-1)\times \sO(1)
\underset{(\ref{e11})} \lacts
\Alg^{\rigid}_{{n-1}}( \Cat_{(\infty,1)} )
\Bigr\rag}
\\
\SS^{n-2}_+
\underset{\sO(1)} \wedge
{\sf BU}(1)
\ar[rr]^-{\simeq}
&&
{\sf BU}(1)_{\Omega \RR\PP^{n-2}}
\ar[rr]
&&
{\sf BAut}_{\CAT}\Bigl(
\Alg_{{n-1}}^{\rigid}( \Cat_{(\infty,1)} ) 
\Bigr)
.
}
\end{equation}
The sought commutative square among pointed spaces is the outer commutative square of the diagram~(\ref{e48}).

\end{proof}

\section{Proof of main results}
Here we prove the our main theorem (Theorem~\ref{t13}), which identifies natural symmetries of each rigid $(n-1)$-monoidal $(g,d)$-category.

\subsection{Proof of the Proposition~\ref{t10}}
Here we prove Proposition~\ref{t10}, restated below as Theorem~\ref{t19}.
We first establish the following characterization of equivariant extensions of action maps.

\begin{lemma}
\label{t18}
Let $H \xra{\rho} G$ be a morphism between continuous groups that is surjective on path-components.
Let $\cC$ be an $\infty$-category that admits colimits indexed by $\infty$-groupoids.  
Let $A \in \Mod_{H}(\cC)$ be an object in $\cC$ equipped with an $H$-action.
The following moduli spaces are canonically equivalent to one another.
\begin{enumerate}
\item
The moduli space of $H$-equivariant extensions of the action morphism:
\[
\xymatrix{
&
H
\odot 
A
\ar[dr]^-{\rm action}
\ar[dl]_-{\rho \times \id }
&
\\
G
\odot 
A
\ar@{-->}[rr]^-{H- \rm equivariant}
&
&
A
~.
}
\]

\item
The moduli space of trivializations of the restricted $\Omega G_H$-action on $A$:
\[
\xymatrix{
\Omega G_H
\ar[rrrr]^-{\Omega ( G_H \xra{!_H} \ast_H = \sB H) }
\ar[d]
&&
&&
H
\ar[rrrr]^-{ \bigl \lag H \lacts A \bigr\rag}
&&
&&
\Aut_{\cC}(A)
\\
\ast
\ar@{-->}[urrrrrrrr]
&&
&&
&&
&&
.
}
\]

\end{enumerate}

\end{lemma}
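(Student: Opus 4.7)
The plan is to identify both moduli spaces with the homotopy fiber of the restriction functor $\rho^* \colon \Fun(\sB G, \cC) \to \Fun(\sB H, \cC)$ over $A$, which parametrizes extensions of the $H$-module structure on $A$ to a $G$-module structure along $\rho$. Establishing this dual characterization will yield the desired canonical equivalence between the two moduli spaces.

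First, I would identify moduli space $(1)$ with this homotopy fiber. Since $\cC$ admits $\infty$-groupoid colimits, the left Kan extension $\rho_! \colon \Fun(\sB H, \cC) \to \Fun(\sB G, \cC)$ exists and is left adjoint to $\rho^*$. The object $G \odot A \in \Mod_H(\cC)$ (with $H$-action via $\rho$-left-multiplication on the $G$-factor) is canonically equivalent to $\rho^* \rho_!(H \odot A)$, and the action morphism $H \odot A \to A$ arises as the counit of the free-forgetful adjunction $\Mod_H(\cC) \leftrightarrows \cC$. Consequently, by the triangle identities, an $H$-equivariant map $G \odot A \to A$ extending this counit is precisely the data of a $G$-module structure on $A$ whose restriction along $\rho$ recovers the given $H$-module structure.

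Second, I would identify this homotopy fiber with moduli space $(2)$ via obstruction theory for extending morphisms of continuous groups along a fiber sequence. Looping the fiber sequence of pointed spaces $G \to G_H \to \sB H$ produces the fiber sequence $\Omega G_H \to H \xra{\rho} G$ of continuous groups. Applying the functor $\Hom_{\rm Groups}(-, \Aut_\cC(A))$ (equivalently, taking pointed maps to $\sB \Aut_\cC(A)$ after delooping) yields a fiber sequence of spaces whose first term classifies homomorphisms $G \to \Aut_\cC(A)$ (i.e., $G$-actions on $A$), and whose connecting map sends the $H$-action $\alpha$ to the composite $\Omega G_H \to H \to \Aut_\cC(A)$. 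The homotopy fiber over $\alpha$ of the first term, which by the previous step coincides with $(1)$, is thus canonically equivalent to the space of null-homotopies of this composite, i.e., moduli space $(2)$.

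The main technical subtlety lies in the first identification, where the universal property of the adjunction $\rho_! \dashv \rho^*$ must be used to confirm that the $H$-equivariance condition on an extension of the action morphism captures the full coherence data of a $G$-module structure (not merely the underlying action map). The hypothesis that $\rho$ is surjective on $\pi_0$ ensures $G_H$ is connected, giving $\sB \Omega G_H \simeq G_H$ canonically, which is needed so that the obstruction theory in the second step produces the clean fiber sequence described.
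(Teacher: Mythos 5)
Your strategy has a genuine gap: neither moduli space in the lemma is equivalent to the homotopy fiber of $\rho^{\ast}\colon \Fun(\sB G,\cC)\to\Fun(\sB H,\cC)$ over $A$, so the third space you route both identifications through is the wrong one. Take $H=\ast$ and $G$ connected. Moduli space (1) is then the space of pointed maps from $G$ to $\End_{\cC}(A)$ based at $\id_A$, which (as $G$ is connected) is $\Hom_{\Spaces^{\ast/}}(G,\Aut_{\cC}(A))\simeq \Hom_{\Spaces^{\ast/}}\bigl(\Sigma G,{\sf BAut}_{\cC}(A)\bigr)$, whereas the fiber of $\rho^{\ast}$ is the space of coherent $G$-actions, $\Hom_{\Spaces^{\ast/}}\bigl(\sB G,{\sf BAut}_{\cC}(A)\bigr)$. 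For $G=\sU(1)$ these are $\Omega\Aut_{\cC}(A)$ and $\Hom_{\Spaces^{\ast/}}\bigl(\CC\PP^{\infty},{\sf BAut}_{\cC}(A)\bigr)$, which differ. The point is that an $H$-equivariant map $\rho^{\ast}\rho_{!}(H\odot A)\to A$ under the counit records only the structure map of an algebra over the monad $\rho^{\ast}\rho_{!}$, with none of the higher associativity coherences; the ``technical subtlety'' you flag at the end is not a gap to be filled but a false assertion. (This is exactly the distinction the paper insists on in Remark~\ref{r1}: the lemma delivers an equivariant extension of the action \emph{map}, not an extension of the action.) Your second step fails for the parallel reason: $\Hom_{\sf Groups}(-,\Aut_{\cC}(A))$ carries cofiber sequences to fiber sequences, not fiber sequences such as $\Omega G_H\to H\to G$, so the first term of the resulting sequence is not $\Hom_{\sf Groups}(G,\Aut_{\cC}(A))$.

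The paper never passes through $\Fun(\sB G,\cC)$. It computes moduli space (1) directly by a chain of adjunctions: the tensor--hom adjunction gives $\Hom_{\Mod_H(\Spaces)^{H/}}(G,\End_{\cC}(A))$; surjectivity of $\rho$ on path-components replaces $\End$ by $\Aut$; straightening/unstraightening converts this to maps $G_H\to\Aut_{\cC}(A)_H$ over $\sB H$; base change along $\sB H\to{\sf BAut}_{\cC}(A)$ identifies these with lifts of $G_H\to{\sf BAut}_{\cC}(A)$ through $\ast$; and the $(\sB,\Omega)$-adjunction, using that $G_H$ is connected, converts this to null-homotopies of $\Omega G_H\to\Aut_{\cC}(A)$ in ${\sf Groups}_{/\Aut_{\cC}(A)}$, i.e.\ moduli space (2). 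If you wish to salvage your two-step outline, the correct intermediate object is the space of pointed null-homotopies of the composite $G_H\to\sB H\to{\sf BAut}_{\cC}(A)$, not the fiber of $\rho^{\ast}$.
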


\begin{proof}
The statement follows by establishing the following sequence of equivalences among morphism spaces.
\begin{eqnarray*}
\Hom_{\Mod_H(\cC)^{H\odot A/}} \bigl( G \odot A , A \bigr)
&
\simeq
&
\Hom_{\Mod_H(\Spaces)^{H/}} \bigl( G , \End_\cC(A)\bigr)
\\
&
\simeq
&
\Hom_{\Mod_H(\Spaces)^{H/}} \bigl( G , \Aut_\cC(A)\bigr)
\\
&
\simeq
&
\Hom_{(\Spaces^{\ast/})_{/\sB H}} \bigl( G_H , \Aut_\cC(A)_H\bigr)
\\
&
\simeq
&
\Hom_{(\Spaces^{\ast/})_{/{\sf BAut}_\cC(A)}} \bigl( G_H , \ast \bigr)
\\
&
\simeq
&
\Hom_{{\sf Groups}_{/\Aut_\cC(A)}} \bigl( \Omega G_H , \ast \bigr)
\end{eqnarray*}
For $X$ a space, tensoring with it, $X \odot-$, is left adjoint to maps from $X$ to a morphism space.
Specifically, for $U,W \in \cU$ objects in an $\infty$-category that admits colimits indexed by spaces, there is a canonical equivalence between spaces of morphisms:
$\Hom_{\cU}(X \odot U , V) \simeq \Hom_{\Spaces} \bigl( X , \Hom_{\cU}(U,V) \bigr)$.
This first equivalence follows from this adjunction.

The second equivalence follows from the assumption that $H\xra{\rho}G$ is surjective on path-components.

The third equivalence follows from the straightening-unstraightening equivalence: 
\[
(-)_{H}
\colon
\Mod_H(\Spaces) 
\xra{~\simeq~} 
\Spaces_{/\sB H}
~,\qquad
(H \lacts Z)
\mapsto 
Z_H
~,
\]
given by taking $H$-coinvariants.

Composing with the given map $\sB H \to  {\sf BAut}_\cC(A)$ defines a functor:
\[
\Spaces_{/\sB H}
\longrightarrow
\Spaces_{/{\sf BAut}_\cC(A)}
~,\qquad
(Z\to \sB H)
\mapsto 
\bigl(Z \to \sB H \to {\sf BAut}_\cC(A) \bigr)
~.
\]
This functor is a left adjoint, with right-adjoint given by base-change along $\sB H \to  {\sf BAut}_\cC(A)$.
The fourth equivalence then follows from this adjunction, upon observing the base-change square:
\[
\xymatrix{
{\sf Aut}_\cC(A)_H
\ar[rr]
\ar[d]
&&
\ast
\ar[d]
\\
\sB H
\ar[rr]
&&
{\sf BAut}_\cC(A)
.
}
\]

Recall the adjunction
\[
\sB
\colon
{\sf Groups}
~\rightleftarrows~
\Spaces^{\ast/}
\colon
\Omega
~.
\]
In this adjunction, the left adjoint $\sB$ is fully faithful and its image consists of the path-connected pointed spaces.
Because $H\xra{\rho} G$ is surjective on path-components, the space of $H$-coinvariants $G_H$ is path-connected.
The final equivalence then follows from this adjunction.

\end{proof}

We now prove Proposition~\ref{t10}, which we restate as the following.
\begin{theorem}
\label{t19}
There is a canonical $\sO(n-1)\times \sO(1)$-equivariant extension of the action map of~(\ref{e11}):
\[
\xymatrix{
&
\sO(n-1)\times \sO(1)
\times 
\Alg_{{n-1}}^{\rigid}( \Cat_{(\infty,1)} )
\ar[dr]^-{\rm (\ref{e11})'s~action}
\ar[dl]_-{\oplus \times \id }
&
\\
\sO(n)
\times 
\Alg_{{n-1}}^{\rigid}( \Cat_{(\infty,1)} )
\ar@{-->}[rr]^-{\sO(n-1)\times \sO(1)- \rm equivariant}
&
&
\Alg_{{n-1}}^{\rigid}( \Cat_{(\infty,1)} )
~.
}
\]

\end{theorem}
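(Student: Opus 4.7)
The plan is to reduce to a pointed-space computation via Lemma~\ref{t18} and then assemble the pieces already prepared in Observations~\ref{t30},~\ref{t39} and Lemma~\ref{prop.key}. Concretely, apply Lemma~\ref{t18} with
\[
H~=~\sO(n-1)\times\sO(1)~,\qquad G~=~\sO(n)~,\qquad \rho~=~\oplus~,\qquad A~=~\Alg_{n-1}^{\rigid}\bigl(\Cat_{(\infty,1)}\bigr)~.
\]
The homomorphism $\oplus$ is surjective on $\pi_0$ (both groups have $\pi_0\cong \ZZ_{/2\ZZ}$), so the hypothesis of Lemma~\ref{t18} is satisfied. The lemma then identifies the moduli space of $H$-equivariant extensions with the moduli space of null-homotopies of the composite pointed map
\[
\sO(n)_H~\longrightarrow~\sB H~=~\BO(n-1)\times \BO(1)\xra{~\bigl\lag(\ref{e11})\bigr\rag~}{\sf BAut}_{\CAT}(A)~,
\]
where $\sO(n)_H$ denotes the $H$-coinvariants.

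Next, I would identify $\sO(n)_H$ and the above composite using already-established lemmas. Observation~\ref{t30} exhibits $\sO(n)_H\simeq \RR\PP^{n-1}$ and factors the classifying map through
\[
\RR\PP^{n-1}\xra{~(\ref{e34})~}\RR\PP^{n-2}_+\wedge\BO(1)\xra{~(\ref{e39})~}\BO(n-1)\times \BO(1)~;
\]
Observation~\ref{t39} confirms that this is indeed the $H$-quotient classifying map. Lemma~\ref{prop.key} then supplies a filler in the square~(\ref{e12}), providing a canonical identification between the further composite $\RR\PP^{n-2}_+\wedge \BO(1)\to{\sf BAut}_{\CAT}(A)$ and the map through $\SS^{n-2}_+\underset{\sO(1)}\wedge{\sf BU}(1)$ that appears as the bottom-right corner of the square~(\ref{e34}).

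Finally, the commutativity of the square~(\ref{e34}) of pointed spaces says exactly that the composite $\RR\PP^{n-1}\to\SS^{n-2}_+\underset{\sO(1)}\wedge{\sf BU}(1)$ factors through the basepoint $\ast$. Concatenating the three identifications above, I obtain a canonical null-homotopy of the composite $\RR\PP^{n-1}\to \BO(n-1)\times \BO(1)\to{\sf BAut}_{\CAT}(A)$. Via Lemma~\ref{t18}, this null-homotopy is precisely the data of the sought $H$-equivariant extension
\[
\sO(n)\times A~\longrightarrow~A~.
\]

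The proof is therefore almost entirely an assembly: the real substance was absorbed into Lemma~\ref{prop.key} (which in turn relied on the $n=2$ result Theorem~\ref{t11} together with Dunn additivity to handle the extra $\cE_{n-2}$-factor) and into the pointed-space construction of Lemma~\ref{t27}. The only point requiring care is checking that the identifications provided by Observations~\ref{t30} and~\ref{t39} line up compatibly with the bottom-right term of the square in Lemma~\ref{prop.key}, so that concatenating the two canonical null-homotopies in fact yields a null-homotopy of the correct composite; this is the step I expect to require the most bookkeeping, although no genuinely new construction is needed.
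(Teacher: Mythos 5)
Your proposal is correct and follows essentially the same route as the paper: identify $\sO(n)_{\sO(n-1)\times\sO(1)}\simeq \RR\PP^{n-1}$, concatenate the squares from Lemma~\ref{t27} and Lemma~\ref{prop.key} (via Observation~\ref{t30}) to obtain a canonical null-homotopy of $\RR\PP^{n-1}\to \BO(n-1)\times\BO(1)\to {\sf BAut}_{\CAT}\bigl(\Alg_{n-1}^{\rigid}(\Cat_{(\infty,1)})\bigr)$, and convert that into the equivariant extension by Lemma~\ref{t18}. The compatibility bookkeeping you flag at the end is exactly what the paper's diagram~(\ref{e26}) records, so no new content is missing.
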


\begin{proof}
Consider the monomorphism $\sO(n-1)\times \sO(1) \xra{\oplus} \sO(n)$ between Lie groups given by block-sum of matrices.
This morphism is surjective on path-components.
The linear action $\sO(n) \lacts \RR^n$ determines a transitive continuous action $\sO(n) \lacts \RR\PP^{n-1}$ with isotropy of the base point $\ast \xra{\lag \RR \subset \RR^n \rag} \RR\PP^{n-1}$ identified as the compact Lie subgroup which is the image of $\oplus$.  
The orbit-stabilizer theorem thusly identifies the $\sO(n-1)\times \sO(1)$-coinvariants of the underlying space $\sO(n)$:
\[
\sO(n)_{\sO(n-1)\times \sO(1)}
\xra{~\simeq~}
\RR\PP^{n-1}
~.
\]
Through this equivalence, the canonical map $\sO(n)_{\sO(n-1)\times \sO(1)} \to \BO(n-1) \times \BO(1)$ is identified as the map $\RR\PP^{n-1} \xra{(L\subset \RR^n)\mapsto (L^{\perp} , L)} \BO(n-1) \times \BO(1)$.  
By Observation~\ref{t30}, this map factors as the composition of the upper horizontal maps in the diagram among pointed spaces:
\begin{equation}
\label{e26}
\xymatrix{
\RR\PP^{n-1}
\ar[rr]
\ar[d]
&&
\RP^{n-2}_+\wedge\BO(1)
\ar[d]_-{(\ref{e34})}
\ar[rr]^-{( {\sf Ref} , \pr )}
&&
\BO(n-1) \times \BO(1)
\ar[d]^-{\Bigl\lag \sO(n-1)\times \sO(1)
\underset{(\ref{e11})} \lacts
\Alg^{\rigid}_{{n-1}}( \Cat_{(\infty,1)} )
\Bigr\rag}
\\
\ast
\ar[rr]
&&
\SS^{n-2}_+ \underset{\sO(1)}\wedge {\sf BU}(1)
\ar[rr]
&&
{\sf BAut}_{\CAT}\Bigl(
\Alg_{{n-1}}^{\rigid}( \Cat_{(\infty,1)} )
\Bigr)
.
}
\end{equation}
This Lemma~\ref{t27} supplies the left commutative square in~(\ref{e26}), while Lemma~\ref{prop.key} supplies  the right commutative square in~(\ref{e26}).
Applying $\Omega$ to the outer square of~(\ref{e26}) is a commutative diagram
\begin{equation}
\label{e17}
\xymatrix{
\Omega \RR\PP^{n-1}
\ar[rr]
\ar[d]
&&
\sO(n-1)\times \sO(1)
\ar[d]^-{(\ref{e11})}
\\
\ast
\ar[rr]
&&
\Aut_{\CAT}\Bigl(
\Alg_{{n-1}}^{\rigid}( \Cat_{(\infty,1)} )
\Bigr)
.
}
\end{equation}
Through Lemma~\ref{t18} applied to the continuous homomorphism $\sO(n-1)\times \sO(1) \xra{\oplus} \sO(n)$, the commutative diagram~(\ref{e17}) is the sought equivariant extension as in the statement of the theorem.

\end{proof}

\begin{cor} 
\label{t23}
The $\sO(n-1)\times\sO(1)$-equivariant functor of Theorem~\ref{t19}
extends as:
\begin{equation}
\label{e30}
\xymatrix{
&
\sO(n-1)\times\sO(1) \times \Alg_{{n-1}}^{\rigid}( \Cat_{(\infty,1)} )
\ar[dl]
\ar[dr]
\ar[dd]
\\
\sO(n) \times \Alg_{{n-1}}^{\rigid}( \Cat_{(\infty,1)} )
\ar[dd]_-{\sO(n-1) \text{\rm -quotient}}
\ar[rr]
&&
\Alg_{{n-1}}^{\rigid}( \Cat_{(\infty,1)} )
\ar[dd]^-{\rm forget}
\\
&
\sO(1) \times \Alg_{{n-1}}^{\rigid}( \Cat_{(\infty,1)} )
\ar[dr]
\ar[dd]
\ar[dl]
\\
\SS^{n-1} \times \Alg_{{n-1}}^{\rigid}( \Cat_{(\infty,1)} )
\ar[dd]_-{\sO(1) \text{\rm - quotient}}
\ar@{-->}[rr]
&&
\Cat_{(\infty,1)}
\ar[dd]^{{\sf ev}_{[p]}}
\\
&
\Alg_{{n-1}}^{\rigid}( \Cat_{(\infty,1)} )
\ar[dl]
\ar[dr]
\\
\RP^{k}\times \Alg_{{n-1}}^{\rigid}( \Cat_{(\infty,1)} )
\ar@{-->}[rr]
&&
\Spaces
}
\end{equation}
in which the left and middle upper vertical functors witness $\sO(n-1)$-quotients, the left and middle lower vertical functors witness $\sO(1)$-quotients, and the middle horizontal functor is $\sO(1)$-equivariant.

\end{cor}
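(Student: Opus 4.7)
The plan is to build diagram~(\ref{e30}) by successively post-composing the $\sO(n-1)\times\sO(1)$-equivariant action morphism of Theorem~\ref{t19} with forgetful functors that each trivialize one of the relevant group actions, and then invoking the universal property of coinvariants to produce each dashed arrow. The top triangle of~(\ref{e30}) is Theorem~\ref{t19} restated, so I need only supply the middle triangle (with dashed arrow $\SS^{n-1}\times\Alg_{{n-1}}^{\rigid}(\Cat_{(\infty,1)})\dashrightarrow\Cat_{(\infty,1)}$) and the bottom triangle (with dashed arrow $\RR\PP^{n-1}\times\Alg_{{n-1}}^{\rigid}(\Cat_{(\infty,1)})\dashrightarrow\Spaces$; I read the ``$\RR\PP^k$'' in the statement as $\RR\PP^{n-1}$). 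The middle triangle comes from post-composing with the forgetful functor $\pi\colon\Alg_{{n-1}}^{\rigid}(\Cat_{(\infty,1)})\to\Cat_{(\infty,1)}$; the bottom comes from further post-composing with ${\sf ev}_{[p]}\colon\Cat_{(\infty,1)}\to\Spaces$.

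\textbf{Middle layer.} For the middle triangle, the key observation is that $\pi$ is $\sO(n-1)$-invariant with respect to the trivial $\sO(n-1)$-action on $\Cat_{(\infty,1)}$: indeed, the $\sO(n-1)$-action on $\Alg_{{n-1}}^{\rigid}(\Cat_{(\infty,1)})$ of Observation~\ref{t2} is induced from the operadic action $\sO(n-1)\lacts\cE_{n-1}$, which acts trivially on the unary component of the operad, and $\pi$ factors through evaluation at this component. Combined with the $\sO(n-1)\times\sO(1)$-equivariance granted by Theorem~\ref{t19}, this means the composite
\[
\sO(n)\times\Alg_{{n-1}}^{\rigid}(\Cat_{(\infty,1)})\xra{~\rm action~}\Alg_{{n-1}}^{\rigid}(\Cat_{(\infty,1)})\xra{~\pi~}\Cat_{(\infty,1)}
\]
is $\sO(n-1)$-invariant, where $\sO(n-1)$ acts on $\sO(n)$ by right multiplication and trivially on the remaining factors, and is $\sO(1)$-equivariant for the involution $\cC\mapsto\cC^{\op_1}$ on $\Cat_{(\infty,1)}$ of Observation~\ref{t5}. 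Invoking the universal property of the $\sO(n-1)$-quotient $\sO(n)\to\sO(n)/\sO(n-1)\simeq\SS^{n-1}$ produces the middle dashed arrow $\SS^{n-1}\times\Alg_{{n-1}}^{\rigid}(\Cat_{(\infty,1)})\dashrightarrow\Cat_{(\infty,1)}$ together with its asserted $\sO(1)$-equivariance.

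\textbf{Bottom layer and main obstacle.} For the bottom triangle, the $\sO(1)$-involution on $\Cat_{(\infty,1)}\subset\PShv(\bDelta)$ is precomposition with the involution of $\bDelta$ reversing linear orders (Observation~\ref{t5}), and this involution fixes each $[p]\in\bDelta$ via the canonical order-reversing isomorphism $i\mapsto p-i$. Consequently, ${\sf ev}_{[p]}\colon\Cat_{(\infty,1)}\to\Spaces$ is $\sO(1)$-equivariant for the trivial $\sO(1)$-action on $\Spaces$. Post-composing with ${\sf ev}_{[p]}$ therefore converts the $\sO(1)$-equivariant middle dashed functor into an $\sO(1)$-invariant functor $\SS^{n-1}\times\Alg_{{n-1}}^{\rigid}(\Cat_{(\infty,1)})\to\Spaces$, which descends along the $\sO(1)$-quotient $\SS^{n-1}\to\SS^{n-1}/\sO(1)\simeq\RR\PP^{n-1}$ to the bottom dashed arrow, completing the diagram. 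The main technical subtlety is the precise formulation of $\sO(n-1)$-invariance of the forgetful functor $\pi$ above: this rests on identifying $\pi$ with the evaluation functor associated to the inclusion of the unary arity $\{\lag 1\rag\}\hookrightarrow\cE_{n-1}$ and observing that $\sO(n-1)\lacts\cE_{n-1}$ fixes this inclusion, so that the induced $\BO(n-1)$-parametrized family of auto-equivalences of $\Alg_{{n-1}}^{\rigid}(\Cat_{(\infty,1)})$ becomes a constant family after post-composition with $\pi$; once this is established, the remainder of the argument is a formal application of universal properties of coinvariants in the $\infty$-categorical setting.
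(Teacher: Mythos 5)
Your proposal is correct and follows essentially the same route as the paper: both arguments rest on the $\sO(n-1)$-invariance of the forgetful functor (because the $\sO(n-1)$-action is corepresented by an action on the operad $\cE_{n-1}$ fixing its unary part) and the $\sO(1)$-invariance of $\ev_{[p]}$ (because order-reversal on $\bDelta$ fixes each object $[p]$), after which the dashed fillers descend along the coinvariant identifications $\sO(n)_{\sO(n-1)}\simeq \SS^{n-1}$ and $\SS^{n-1}_{\sO(1)}\simeq \RR\PP^{n-1}$. Your reading of the statement's $\RR\PP^{k}$ as $\RR\PP^{n-1}$ is also the intended one.
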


\begin{proof}
Recall the construction of the action $\sO(n-1) \times \sO(1) \underset{(\ref{e11})} \lacts \Alg_{{n-1}}^{\rigid}(\Cat_{(\infty,1)})$ summarized as Observation~\ref{t16}.
In particular, recall that the $\sO(n-1)$-action on $\Alg_{{n-1}}(\Cat_{(\infty,1)})$ corepresented by an $\sO(n-1)$-action on the $\infty$-operad $\cE_{n-1}$.
It follows that the right upper vertical functor is $\sO(n-1)$-invariant.
Also, recall that the $\sO(1)$-action on $\Cat_{(\infty,1)}$ is corepresented by an involution on $\bDelta$ given by reversing linear orders.
Note the unique equivalence $[p]^{\op} \cong [p]$ in $\bDelta$ for each $[p]\in \bDelta$.
These unique equivalences reveal that his involution on $\bDelta$ restricts as the identity involution on $\Obj(\bDelta)$.
It follows that, for each $[p]\in \bDelta$, the evaluation functor $\Cat_{(\infty,1)} \xra{\ev_{[p]}} \Spaces$ is $\sO(1)$-invariant.

Now, given these invariances of the vertical functors in~(\ref{e30}), 
the equivariant fillers in~(\ref{e30}) immediately follow from the $\sO(n-1)\times \sO(1)$-equivariance of the filler in Theorem~\ref{t19}, using the standard identities:
\[
\sO(n)_{\sO(n-1) } \simeq \SS^{n-1}
\qquad
\text{ and }
\qquad
\sO(n)_{\sO(n-1) \times \sO(1)}
~\simeq~ 
\SS^{n-1}_{\sO(1)} \simeq \RR\PP^{n-1}
~.
\]

\end{proof}

\subsection{Case $d=1$ and $g=\infty$}
\label{sec.main.proof}

We now state and prove our main theorem (Theorem~\ref{t13} above) in the cases in which $d=1$ and $g=\infty$, which we state more precisely here.
\begin{theorem}
\label{t24}
Let $p\geq 0$ be a non-negative integer.
There are canonical lifts as in the diagram among $\infty$-categories:
\begin{equation}
\label{e29}
\xymatrix{
\Alg_{{n-1}}^{\rigid}( \Cat_{(\infty,1)} )
\ar@{-->}[rrr]
\ar@{-->}[drr]
\ar@{-->}[ddr]
\ar@(d,d)[dddr]_-{\rm inclusion}
&
&
&
\Mod_{\Omega \RR\PP^{n-1}} ( \Spaces )
\ar[d]^-{\rm forget}
\\
&
&
\Mod_{\Omega \SS^{n-1}} ( \Cat_{(\infty,1)} )
\ar[d]_-{\rm forget}
\ar[r]
&
\Mod_{\Omega \SS^{n-1}} ( \Spaces )
\ar[d]^-{\rm forget}
\\
&
\Mod_{\Omega \sO(n)} \Bigl( \Alg_{{n-1}}^{}( \Cat_{(\infty,1)} ) \Bigr)
\ar[r]
\ar[d]^-{\rm forget}
&
\Mod_{\Omega \sO(n)} ( \Cat_{(\infty,1)} )
\ar[r]
\ar[d]^-{\rm forget}
&
\Mod_{\Omega \sO(n)} ( \Spaces )
\ar[d]^-{\rm forget}
\\
&
\Alg_{{n-1}}^{}( \Cat_{(\infty,1)} )
\ar[r]^-{\rm forget}
&
\Cat_{(\infty,1)}
\ar[r]^-{\ev_{[p]}}
&
\Spaces
}
\end{equation}
In particular, for $\cR$ an $(n-1)$-monoidal $(\infty,1)$-category, provided $\cR$ is rigid then there exists a canonical fillers in the diagram among groups:
\[
\xymatrix{
\Omega\sO(n)
\ar@{-->}[rr]
\ar[d]_-{\Omega \bigl( \sO(n-1) \text{\rm -quotient} \bigr)}
&&
\Aut_{\Alg_{{n-1}}( \Cat_{(\infty,1)} )}(\cR)
\ar[d]
\\
\Omega \SS^{n-1}
\ar@{-->}[rr]
\ar[d]_-{\Omega \bigl( \sO(1) \text{\rm -quotient} \bigr)}
&&
\Aut_{\Cat_{(\infty,1)}}(\cR)
\ar[d]
\\
\Omega\RP^{n-1}
\ar@{-->}[rr]
&&
\Aut_{\Spaces}(\cR[p])
.
}
\]
\end{theorem}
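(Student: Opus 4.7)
The plan is to deduce Theorem~\ref{t24} from the full commutative diagram~(\ref{e30}) of Corollary~\ref{t23} by translating each of its three horizontal functors into a lift into a module $\infty$-category. The key translation we will use is this: for $K$ a pointed connected space with loop-group $G := \Omega K$, an $\infty$-category $Y$, and a fixed functor $f \colon X \to Y$, the tensor/cotensor adjunction in $\CAT$ identifies natural transformations $F \colon K \times X \to Y$ whose restriction along $\{\ast_K\} \times X \hookrightarrow K \times X$ agrees with $f$, with functors $X \to \Fun(K, Y)$ lifting $f$ through evaluation at $\ast_K$; the natural equivalence $\Fun(\sB G, Y) \simeq \Mod_G(Y)$ then recasts this as a lift $X \to \Mod_G(Y)$ of $f$ through the forgetful functor. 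When $K$ is pointed but not connected (as for $K = \sO(n)$), we first restrict to the identity-component $\mathsf{SO}(n) \simeq \sB\Omega \sO(n)$; since $\Omega\sO(n) = \Omega\mathsf{SO}(n)$, this does not alter the acting group.

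Applying this translation to each of the three horizontal functors of~(\ref{e30}) yields the three essential lifts in~(\ref{e29}). The top horizontal $\sO(n) \times X \to X$ (the $\sO(n-1)\times\sO(1)$-equivariant extension of Theorem~\ref{t19}), post-composed with the inclusion $X \hookrightarrow \Alg_{{n-1}}(\Cat_{(\infty,1)})$, restricts at $1 \in \sO(n)$ to this inclusion; the translation then yields the left-most lift $X \to \Mod_{\Omega\sO(n)}(\Alg_{{n-1}}(\Cat_{(\infty,1)}))$. The middle horizontal $\SS^{n-1} \times X \to \Cat_{(\infty,1)}$ restricts at its basepoint to the forgetful $X \to \Cat_{(\infty,1)}$, yielding $X \to \Mod_{\Omega\SS^{n-1}}(\Cat_{(\infty,1)})$. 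The bottom horizontal $\RR\PP^{n-1}\times X \to \Spaces$ restricts at its basepoint to $\ev_{[p]}\circ \mathrm{forget}$, yielding $X \to \Mod_{\Omega\RR\PP^{n-1}}(\Spaces)$. The remaining entries in~(\ref{e29}) --- the lifts through $\Mod_{\Omega\sO(n)}(\Cat_{(\infty,1)})$, $\Mod_{\Omega\sO(n)}(\Spaces)$, and $\Mod_{\Omega\SS^{n-1}}(\Spaces)$ --- are obtained by post-composition with base-change along the group homomorphisms $\Omega\sO(n) \to \Omega\SS^{n-1} \to \Omega\RR\PP^{n-1}$ and with the underlying-$\infty$-category functors $\Alg_{{n-1}}(\Cat_{(\infty,1)}) \to \Cat_{(\infty,1)} \xra{\ev_{[p]}} \Spaces$.

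All of the commutative triangles and squares implicit in~(\ref{e29}) are then immediate consequences of the commutativity of~(\ref{e30}); for instance, the agreement between the $\sO(n)$-family $\sO(n)\times X \to \Cat_{(\infty,1)}$ obtained by composing the top horizontal with forget, and the pullback along the $\sO(n-1)$-quotient $\sO(n) \to \SS^{n-1}$ of the middle horizontal $\SS^{n-1}\times X \to \Cat_{(\infty,1)}$, is precisely the compatibility between the corresponding module lifts under base-change along $\Omega\sO(n) \to \Omega\SS^{n-1}$. The final assertion of the theorem --- the diagram among automorphism groups for a fixed rigid $\cR \in X$ --- is obtained by evaluating each of the functorial lifts just constructed at $\cR$. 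The main obstacle is purely formal bookkeeping: correctly tracking basepoints throughout, the identification of $\sO(n)$'s identity-component with $\sB\Omega\sO(n)$, and the interplay between tensor-by-space and function-from-space adjunctions in $\CAT$. None of these is conceptually difficult, but their coordinated verification across all three horizontal functors and all the commutative squares constitutes the bulk of the argument.
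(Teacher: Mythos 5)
Your proposal is correct and follows essentially the same route as the paper's proof: the paper likewise converts each horizontal functor of~(\ref{e30}) into a lift through a forgetful functor via the Tensor-Hom adjunction $\Hom_{(\Cat_{\infty})^{\cA/}}(K\times\cA,\cB)\simeq\Hom_{(\Cat_{\infty})_{/\cB}}(\cA,\Fun(K,\cB))$, restriction along the connected cover $\sB\Omega K\to K$ (your passage to the identity component of $\sO(n)$ is the same step), and the identification $\Fun(\sB\Omega K,\cB)=\Mod_{\Omega K}(\cB)$, then deduces commutativity of~(\ref{e29}) from that of~(\ref{e30}). No gaps.
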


\begin{proof}
Let $\cA\to \cB$ be a functor between $\infty$-categories; 
let $x\in K$ be a pointed space.
Observe the canonical maps between spaces of morphisms:
\begin{eqnarray}
\label{e31}
\Hom_{(\Cat_{\infty})^{\cA/}} ( K \times \cA , \cB )
&
~\simeq~
&
\Hom_{(\Cat_{\infty})_{/\cB}} \bigl( \cA , \Fun(K , \cB) \bigr)
\\
\nonumber
&
\longrightarrow
&
\Hom_{(\Cat_{\infty})_{/\cB}} \bigl( \cA , \Fun(\sB \Omega K , \cB) \bigr)
\\
\nonumber
&
~=~
&
\Hom_{(\Cat_{\infty})_{/\cB}} \bigl( \cA , \Mod_{\Omega K}(\cB) \bigr)
~,
\end{eqnarray}
where the first equivalence follows from a Tensor-Hom adjunction, and the second map is induced by the connected cover $\sB \Omega K \to K$, and the third identity is the definition of $\Mod_{\Omega K}(\cB)$.
\begin{itemize}
\item
Consider the case of the functor $\Alg_{{n-1}}^{\rigid}( \Cat_{(\infty,1)} ) \xra{\id} \Alg_{{n-1}}^{\rigid}( \Cat_{(\infty,1)} )$ and the pointed space $\uno \in \sO(n)$.
In this case, the upper horizontal functor in~(\ref{e30}) is carried through the map~(\ref{e31}) to a functor
\[
\Alg_{{n-1}}^{\rigid}( \Cat_{(\infty,1)} )
\longrightarrow
\Mod_{\Omega \sO(n)} \bigl( \Alg_{{n-1}}^{\rigid}( \Cat_{(\infty,1)} ) \bigr)
\]
over $\Alg_{{n-1}}^{\rigid}( \Cat_{(\infty,1)} ) \xra{\id} \Alg_{{n-1}}^{\rigid}( \Cat_{(\infty,1)} )$.

\item
Consider the case of the functor $\Alg_{{n-1}}^{\rigid}( \Cat_{(\infty,1)} ) \xra{\rm forget} \Cat_{(\infty,1)}$ and the pointed space $\ast \in \SS^{n-1}$.
In this case, the middle horizontal functor in~(\ref{e30}) is carried through the map~(\ref{e31}) to a functor
\[
\Alg_{{n-1}}^{\rigid}( \Cat_{(\infty,1)} )
\longrightarrow
\Mod_{\Omega \SS^{n-1} } ( \Cat_{(\infty,1)} )
\]
over $\Alg_{{n-1}}^{\rigid}( \Cat_{(\infty,1)} ) \xra{\rm forget} \Cat_{(\infty,1)}$.

\item
Consider the case of the functor $\Alg_{{n-1}}^{\rigid}( \Cat_{(\infty,1)} ) \xra{\ev_{[p]} \circ {\rm forget}} \Spaces$ and the pointed space $\ast \in \RR\PP^{n-1}$.
In this case, the lower horizontal functor in~(\ref{e30}) is carried through the map~(\ref{e31}) to a functor
\[
\Alg_{{n-1}}^{\rigid}( \Cat_{(\infty,1)} )
\longrightarrow
\Mod_{\Omega \RR\PP^{n-1} } ( \Spaces )
\]
over $\Alg_{{n-1}}^{\rigid}( \Cat_{(\infty,1)} ) \xra{\ev_{[p]} \circ {\rm forget}} \Spaces$.

\end{itemize}
This supplies the dashed functors in the diagram~(\ref{e29}), each for which they fill the solid diagram of~(\ref{e29}).
Commutativity of the entire diagram~(\ref{e29}) follows immediately from commutativity of the right-angled squares in diagram~(\ref{e30}) of Corollary~\ref{t23}.

\end{proof}

\subsection{Case of 1-categories in $\cV$}
\label{sec.internal}
Here, we establish a version of our main theorem in which $\Cat_{(\infty,1)}$ is replaces by $\Cat_1[\cV]$.

\begin{convention}
In this section, we fix an $\infty$-category $\cV$ with finite products.

\end{convention}

\begin{example}
A case of particular interest is $\cV = \Cat_{(\infty,d-1)}$ for a choice of non-negative integer $d> 0$, with the $d=1$ instance being $\cV = \Cat_{(\infty,0)} = \Spaces$.

\end{example}

\begin{definition}
\label{d2}
A \bit{1-category in $\cV$} is a simplicial object $\cC \colon \bDelta^{\op} \to \cV$ satisfying the following conditions.
\begin{itemize}
\item
The Segal condition: 
for each $[p]\in \bDelta$, the functor $\cC$ carries the opposite of the diagram 
(\ref{e51})
in $\bDelta$
to a limit diagram in $\cV$.

\item
The univalent-completeness: the functor $\cC$ carries the opposite of the diagram
(\ref{e52})
in $\bDelta$
to a limit diagram in $\cV$.
\end{itemize}
For $\cC$ a 1-category in $\cV$, its \bit{objects} and its \bit{morphisms} are the respective values:
\[
\Obj(\cC)
~:=~ 
\cC([0])
~ \in \cV
\qquad
\text{ and }
\qquad
\Mor(\cC)
~:=~ 
\cC([1])
~\in \cV
~.
\]
The $\infty$-category of \bit{1-categories in $\cV$} is the full $\infty$-subcategory 
\[
\Cat_1[\cV]
~\subset~
\Fun(\bDelta^{\op} , \cV)
\]
consisting of the 1-categories in $\cV$.

\end{definition}

\begin{example}
\label{r6}
The definition of a 1-category in $\Spaces$ as a complete Segal space (\cite{rezk}) lends an identity between $\infty$-categories: 
\[
\Cat_{(\infty,1)} 
~\simeq~ 
\Cat_1[\Spaces]
~.
\]

\end{example}

For $d\geq 0$, recall from~\cite{rezk.n} the definition of the $(\infty,1)$-category $\Cat_{(\infty,d)}$ of $(\infty,d)$-categories.
\begin{definition}
\label{d7}
Let $d \geq 0$ and $d\leq g \leq \infty$.
The $(\infty,1)$-category of $(g,d)$-categories is the full $(\infty,1)$-subcategory
\[
\Cat_{(g,d)}
~\subset~
\Cat_{(\infty,d)}
\]
consisting of those $(\infty,d)$-categories $\cC$ in which, for each pair of objects $x,y\in \Obj(\cC)$ the following property holds.
\begin{itemize}

\item
If $d=0$, the space $\Hom_{\cC}(x,y)$ is a $(g-1)$-type;

\item
If $d\geq 1$, the $(\infty,d-1)$-category $\un{\Hom}_{\cC}(x,y)$ is a $(g-1,d-1)$-category.

\end{itemize}

\end{definition}

\begin{observation}
\label{t40}
Let $d \geq 0$ and $d\leq g \leq \infty$.
The fully faithful inclusion $\Cat_{(g,d)} \hookrightarrow \Cat_{(\infty,d)}$ is a right adjoint in a localization, and in particular detects and preserves limits.  
Furthermore, the left adjoint in this localization preserves finite products.

\end{observation}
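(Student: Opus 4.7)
The plan is to induct on $d$. For the base case $d = 0$, I would identify $\Cat_{(\infty,0)} = \Spaces$ and observe that the condition ``$\Hom_\cC(x,y) \simeq \Omega_{x,y}\cC$ is a $(g-1)$-type for all $x, y \in \cC$'' is equivalent to $\cC$ being a $g$-truncated space, so $\Cat_{(g,0)} \simeq \Spaces_{\leq g}$. The reflective subcategory $\Spaces_{\leq g} \hookrightarrow \Spaces$ is classical, with left adjoint given by $g$-truncation $\tau_{\leq g}$; this truncation preserves finite products because homotopy groups commute with finite products.

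For the inductive step, assume the assertion for $d - 1$. First I would verify that $\Cat_{(g,d)} \subset \Cat_{(\infty,d)}$ is closed under (small) limits: for a limit $\cC = \lim_i \cC_i$, each hom-$(\infty,d-1)$-category of $\cC$ is a limit of the corresponding hom-$(\infty,d-1)$-categories of the $\cC_i$, each of which lies in $\Cat_{(g-1,d-1)}$; by the inductive hypothesis the inclusion $\Cat_{(g-1,d-1)} \hookrightarrow \Cat_{(\infty,d-1)}$ preserves limits, so $\un{\Hom}_\cC(x,y)$ is itself a $(g-1,d-1)$-category and hence $\cC \in \Cat_{(g,d)}$. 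Since $\Cat_{(\infty,d)}$ is presentable and $\Cat_{(g,d)}$ is accessible (being cut out by local objects for a small set of maps encoding the truncatedness of hom-objects) and limit-closed, the adjoint functor theorem supplies a left adjoint $L_{g,d}\colon \Cat_{(\infty,d)} \to \Cat_{(g,d)}$.

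Next I would establish the stronger closure: for any $(\infty,d)$-category $Z$ and any $(g,d)$-category $W$, the functor $(\infty,d)$-category $\Fun(Z, W)$ is again a $(g,d)$-category. Its hom-$(\infty,d-1)$-categories are natural-transformation objects, which can be written as limits of hom-$(\infty,d-1)$-categories of $W$; by the limit-closure at level $d-1$ just used, these remain $(g-1,d-1)$-categories. Specializing $Z$ to a discrete space recovers closure of $\Cat_{(g,d)}$ under finite products, so $L(X) \times L(Y)$ is already local.

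Finally, to check that $L_{g,d}$ preserves finite products, I would factor the canonical comparison as
\[
X \times Y \xra{\eta_X \times \id_Y} L(X) \times Y \xra{\id_{L(X)} \times \eta_Y} L(X) \times L(Y).
\]
By cartesian closedness of $\Cat_{(\infty,d)}$, the internal-hom closure above translates into the statement that $- \times Z$ sends local equivalences to local equivalences for every $Z$; each of the two factors is therefore a local equivalence, whence the composite is a local equivalence into the local object $L(X) \times L(Y)$, identifying the latter with $L(X \times Y)$. The main technical input underlying the argument is the identification of hom-$(\infty,d-1)$-categories of limits and of internal-hom objects in $\Cat_{(\infty,d)}$ as the corresponding limits at level $d-1$, which is standard via the enriched (or iterated-Segal) presentation of $(\infty,d)$-categories.
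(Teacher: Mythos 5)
The paper gives no argument for this Observation --- it is asserted as standard background, deferred implicitly to Rezk's construction of $\Cat_{(\infty,d)}$ --- so there is no proof of record to compare against. Your induction on $d$ is a correct and reasonable way to supply one, and it correctly isolates where the content lies. Two comments. First, on the existence of the left adjoint: rather than verifying limit-closure and accessibility separately and then invoking the adjoint functor theorem, it is cleaner (and is what your own parenthetical already suggests) to exhibit $\Cat_{(g,d)}$ directly as the full subcategory of $S$-local objects for a small set $S$ of maps, built inductively by applying the cellular ``suspension'' construction $\Cat_{(\infty,d-1)}\to\Cat_{(\infty,d)}$ to the localizing set at stage $d-1$; reflectivity, limit-preservation, and limit-detection by the inclusion are then automatic. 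Second, on product-preservation of the left adjoint: your reduction to the claim that $\Fun(Z,W)$ lies in $\Cat_{(g,d)}$ whenever $W$ does is the standard argument, and the input you flag at the end --- that $\un{\Hom}_{\Fun(Z,W)}(F,G)$ is a limit (an end) of hom-objects of $W$, so that the inductive limit-closure at level $d-1$ applies --- is genuinely the crux and is correctly identified as such; note it cannot be sidestepped by a general truncation argument, since left adjoints to truncations do not preserve products in an arbitrary presentable $\infty$-category, only in suitably cartesian localizations such as this one. A minor redundancy: closure of $\Cat_{(g,d)}$ under finite products already follows from the limit-closure you establish first, so the remark about specializing $Z$ to a discrete space is not needed.
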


\begin{example}
\label{r9}
Let $d\geq 1$ and $d\leq g \leq \infty$.
In the case that $\cV = \Cat_{(g,d-1)}$, the $\infty$-category of $(g,d)$-categories can be presented as the full $\infty$-subcategory
\[
\Cat_{(g,d)}
~\subset~
\Cat_1[\Cat_{(g,d-1)}]
\]
consisting of those 1-categories in $\Cat_{(g,d-1)}$, denoted $\bDelta^{\op} \xra{\cC} \Cat_{(g,d-1)}$, for which 
\begin{itemize}
\item
the value $\cC([0]) \in \Spaces_{\leq g} = \Cat_{(g,0)} \subset \Cat_{(g,d-1)}$ is a $g$-type;

\item
each fiber of the map $\cC([1]) \to \cC(\{0\})\times \cC(\{1\})$, which is a priori a $(g,d-1)$-category, is in fact a $(g-1,d-1)$-category.\footnote{In other words, for each $U,V\in \cC([0])$, the a priori $(g,d-1)$-category $\un{\Hom}_{\cC}(U,V)$ is in fact a $(g-1,d-1)$-category.}\footnote{Note that  this second condition is vacuous in the case that $g=\infty$.}

\end{itemize}  
Note that these two conditions are preserved by finite products.
Therefore the above is a full $\infty$-subcategory.

\end{example}

\begin{example}
\label{r5}
For $\cK$ an $\infty$-category, there is a canonical identification between $\infty$-categories,
\[
\Cat_1\bigl[\PShv(\cK)\bigr] 
~\simeq~ 
\Fun\bigl(\cK^{\op} , \Cat_1[\Spaces]\bigr) 
\underset{\rm Eg~\ref{r6}}{~\simeq~}
\Fun(\cK^{\op} , \Cat_{(\infty,1)})
~,
\]
in which the first identification is a restriction of $\Fun\bigl(\bDelta^{\op},\Fun(\cK^{\op} , \Spaces) \bigr) \simeq \Fun\bigl(\cK^{\op},\Fun(\bDelta^{\op} , \Spaces)$.

\end{example}

\begin{observation}
\label{t17}
Using that $\cV$ admits finite products, the $\infty$-category $\Cat_1[\cV]$ admits finite products and the fully faithful functor
\[
\Cat_1[\cV]
~\hookrightarrow~
\Fun(\bDelta^{\op} , \cV)
\]
preserves finite products.
In particular, the $\infty$-category $\Cat_1[\cV]$ admits a Cartesian symmetric monoidal structure.  

\end{observation}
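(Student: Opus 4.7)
The plan is to show that the two defining conditions of Definition~\ref{d2}, the Segal condition and univalent-completeness, are each closed under finite products computed pointwise in $\Fun(\bDelta^{\op}, \cV)$, and then to invoke the standard fact that any $\infty$-category with finite products carries a canonical Cartesian symmetric monoidal structure.

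First, because $\cV$ admits finite products, the functor $\infty$-category $\Fun(\bDelta^{\op}, \cV)$ admits finite products, and these are computed objectwise: for $\cC_1,\ldots,\cC_k \in \Cat_1[\cV]$, the product $\cC := \cC_1 \times \cdots \times \cC_k$ in $\Fun(\bDelta^{\op}, \cV)$ satisfies $\cC([p]) \simeq \cC_1([p]) \times \cdots \times \cC_k([p])$ for each $[p]\in \bDelta$. It suffices to verify that $\cC$ is a 1-category in $\cV$, for then $\cC$ computes the product in the full $\infty$-subcategory $\Cat_1[\cV]$ and the fully faithful inclusion preserves finite products.

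Each of the two conditions of Definition~\ref{d2} asserts that $\cC$ carries a specific finite diagram in $\bDelta$ (the opposite of~(\ref{e51}), respectively the opposite of~(\ref{e52})) to a limit diagram in $\cV$. Since limits in $\cV$ commute with (finite) limits in $\cV$ — in particular, finite products commute with pullbacks — and since each $\cC_i$ satisfies both conditions by hypothesis, the pointwise product $\cC$ also carries these diagrams to limit diagrams. Thus $\cC \in \Cat_1[\cV]$, as desired.

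Having established that $\Cat_1[\cV]$ admits finite products, the existence of a canonical Cartesian symmetric monoidal structure is automatic. The main (very modest) point in the argument is simply the commutation of finite products with limits; there is no substantive obstacle. If desired, the statement can be upgraded: the same argument shows that $\Cat_1[-]$ preserves any class of $\cV$-limits that commute with the limits appearing in~(\ref{e51}) and~(\ref{e52}), so the inclusion $\Cat_1[\cV] \hookrightarrow \Fun(\bDelta^{\op}, \cV)$ in fact preserves all limits that exist in $\cV$.
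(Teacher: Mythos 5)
Your proof is correct and matches the paper's (implicit) reasoning: the paper treats this as an observation, justified exactly as you do by noting that the Segal and univalent-completeness conditions are closed under (pointwise) finite products because a finite product of limit diagrams is a limit diagram. Your closing remark that the inclusion in fact preserves all limits is also consistent with the paper's earlier parallel comment about $\Cat_{(\infty,1)}\subset\PShv(\bDelta)$.
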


Observation~\ref{t17} validates the following.
\begin{convention}
\label{d3}
We regard $\Cat_1[\cV]$ as a symmetric monoidal $\infty$-category via its Cartesian symmetric monoidal structure.

\end{convention}

\begin{definition}
\label{d5}
The $\infty$-category of \bit{$(n-1)$-monoidal $1$-categories in $\cV$} is
\[
\Alg_{{n-1}}\bigl( \Cat_1[\cV] \bigr)
~.
\]

\end{definition}

\begin{observation}
\label{t26}
The assignment $\cW \mapsto \Cat_1[\cW]$ assembles as a functor
\[
\CAT^{\sf f.lim}
\longrightarrow
\CAT
~,\qquad
\cW
\mapsto 
\Cat_1[\cW]
~,
\]
to the $\infty$-category of $\infty$-categories and functors between them
from the $\infty$-category of $\infty$-categories and functors between them that preserve finite limits that exist.
For instance, a finite-limit-preserving functor $\cU \xra{F} \cW$ between $\infty$-categories determines the symmetric monoidal functor
\[
\Cat_1[\cU]
\xra{~\Cat_1[F]~}
\Cat_1[\cW]
~,\qquad
(\bDelta^{\op} \xra{\cC} \cU)
\mapsto
(\bDelta^{\op} \xra{\cC} \cU \xra{F} \cW)
~.
\]

\end{observation}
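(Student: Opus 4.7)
The plan is to obtain the desired functor by restriction, in the source, of the evident post-composition functor $\cW \mapsto \Fun(\bDelta^{\op},\cW)$ from $\CAT$ to $\CAT$. Explicitly, a functor $F\colon \cU \to \cW$ is sent to the functor $\Fun(\bDelta^{\op},F)$ that carries a simplicial object $\cC\colon \bDelta^{\op}\to\cU$ to its post-composite $F\circ\cC$. Functoriality in $F$, and preservation of identities, is immediate from this description.

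The key step is to verify that when $F$ preserves finite limits, the induced functor $\Fun(\bDelta^{\op},F)$ restricts to a functor $\Cat_1[\cU] \to \Cat_1[\cW]$ between the full subcategories of Definition~\ref{d2}. This follows directly from the form of the conditions defining $\Cat_1$: a simplicial object $\cC$ lies in $\Cat_1[\cV]$ precisely when it carries the opposite of the diagram~(\ref{e51}) (for each $[p]\in\bDelta$) and the opposite of~(\ref{e52}) to limit diagrams in $\cV$. Both diagrams are finite, so if $\cC$ carries them to limit diagrams in $\cU$ and $F$ preserves finite limits, then $F\circ\cC$ carries them to limit diagrams in $\cW$. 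Hence $F\circ\cC\in\Cat_1[\cW]$.

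Since $\Cat_1[\cW]\subset \Fun(\bDelta^{\op},\cW)$ is a \emph{full} subcategory, this restriction is uniquely determined by its effect on objects, and inherits functoriality in $F$ automatically from that of $\Fun(\bDelta^{\op},F)$. This yields the desired functor $\CAT^{\sf f.lim}\to\CAT$.

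For the symmetric monoidal enhancement of each induced functor, recall from Convention~\ref{d3} that each $\Cat_1[\cW]$ carries its Cartesian symmetric monoidal structure, and from Observation~\ref{t17} that the inclusion into $\Fun(\bDelta^{\op},\cW)$ preserves finite products. Since $F$ preserves finite limits, in particular finite products, the functor $\Fun(\bDelta^{\op},F)$ preserves finite products (computed pointwise in $\bDelta^{\op}$), and therefore so does its restriction $\Cat_1[F]$. Any finite-product-preserving functor between $\infty$-categories with finite products is canonically symmetric monoidal with respect to the Cartesian symmetric monoidal structures, which yields the claim. The argument is essentially formal, and I do not anticipate a genuine obstacle; the only delicate point is the observation that the two diagrams~(\ref{e51}) and~(\ref{e52}) defining $\Cat_1$ involve only finite limits, which is transparent upon inspection.
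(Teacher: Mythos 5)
Your proposal is correct and matches the paper's (implicit) justification: the paper states this as an Observation without proof, and the intended argument is exactly yours — post-composition restricts to the full subcategories because the Segal and univalent-completeness conditions are finite-limit conditions, and the symmetric monoidal enhancement is automatic for finite-product-preserving functors between Cartesian symmetric monoidal $\infty$-categories.
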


\begin{observation}
\label{r4}
The Yoneda functor $\cV \xra{j} \PShv(\cV)$ preserves and detects finite limits and is fully faithful.
Through Observation~\ref{t26} and Example~\ref{r5}, there results a fully faithful symmetric monoidal functor:
\[
\Cat_1[\cV]
\xra{~\Cat_1[j]~}
\Fun( \cV^{\op} , \Cat_{(\infty,1)})
~.
\]

\end{observation}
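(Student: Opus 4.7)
The plan is to verify the three standard properties of the Yoneda functor $j \colon \cV \to \PShv(\cV)$ directly, then push these properties through the functor $\Cat_1[-]$ of Observation~\ref{t26} composed with the identification of Example~\ref{r5}.

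First I would recall: $j$ is fully faithful by the Yoneda lemma; $j$ preserves all small limits that exist in $\cV$, since limits in $\PShv(\cV)$ are computed pointwise and $\Hom_{\cV}(U,-)$ commutes with limits for every $U\in \cV$; $j$ detects finite limits because if $j$ applied to a cone becomes a limit cone in $\PShv(\cV)$, evaluating at any $U\in\cV$ gives a limit cone in $\Spaces$, so by the Yoneda lemma the original cone is already a limit in $\cV$. In particular, $j$ preserves finite products (including empty products, if present), hence is symmetric monoidal with respect to the Cartesian symmetric monoidal structures on $\cV$ and $\PShv(\cV)$.

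Next, since $j$ preserves finite limits, Observation~\ref{t26} produces a functor $\Cat_1[j] \colon \Cat_1[\cV] \to \Cat_1[\PShv(\cV)]$ sending a simplicial object $\cC$ to $j \circ \cC$. Because the target $\Cat_1$-constructions are equipped with their Cartesian symmetric monoidal structures (Observation~\ref{t17}, Convention~\ref{d3}) and because $j$ preserves finite products, the functor $\Cat_1[j]$ preserves finite products and therefore is canonically symmetric monoidal. Composing with the symmetric monoidal equivalence $\Cat_1[\PShv(\cV)] \simeq \Fun(\cV^{\op}, \Cat_{(\infty,1)})$ of Example~\ref{r5} yields the desired symmetric monoidal functor.

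For fully faithfulness, I would use that post-composition with a fully faithful functor is fully faithful on functor $\infty$-categories: since $j$ is fully faithful, the induced functor $\Fun(\bDelta^{\op}, \cV) \to \Fun(\bDelta^{\op}, \PShv(\cV))$ is fully faithful (fully faithful functors are stable under cotensor by any simplicial set). The inclusions $\Cat_1[\cV] \hookrightarrow \Fun(\bDelta^{\op},\cV)$ and $\Cat_1[\PShv(\cV)] \hookrightarrow \Fun(\bDelta^{\op},\PShv(\cV))$ are full $\infty$-subcategories by construction (Definition~\ref{d2}), so the restriction $\Cat_1[j]$ is itself fully faithful; this property is preserved by composing with the equivalence of Example~\ref{r5}. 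No significant obstacle is anticipated; the only point requiring care is the verification that post-composition by a fully faithful functor preserves full faithfulness on diagram $\infty$-categories, which is a standard $\infty$-categorical fact.
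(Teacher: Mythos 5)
Your proposal is correct and follows essentially the same route the paper intends: the paper states this as an observation justified by citing Observation~\ref{t26} and Example~\ref{r5}, and your argument simply fills in the standard details (Yoneda lemma for full faithfulness and preservation/detection of limits, pointwise computation of limits and products in diagram categories, stability of full faithfulness under postcomposition and under restriction to full subcategories). No gaps.
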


\begin{observation}
\label{t31}
There is a canonical identification between $\infty$-categories:
\[
\Alg_{{n-1}} \bigl( \Fun(\cV^{\op} , \Cat_{(\infty,1)}) \bigr)
~\simeq~
\Fun\bigl( \cV^{\op} , \Alg_{{n-1}} (\Cat_{(\infty,1)}) \bigr)
~.
\]

\end{observation}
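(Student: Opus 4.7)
The plan is to recognize this identification as a formal instance of the compatibility of $\cE_{n-1}$-algebras with pointwise (Cartesian) symmetric monoidal structures on functor $\infty$-categories. By Convention~\ref{r8}, the symmetric monoidal structure on $\Cat_{(\infty,1)}$ is Cartesian. Because limits in a functor $\infty$-category are computed pointwise, the $\infty$-category $\Fun(\cV^{\op}, \Cat_{(\infty,1)})$ admits finite products computed pointwise, and Convention~\ref{d3} equips it with its Cartesian symmetric monoidal structure. In particular, for each object $v \in \cV$, the evaluation functor $\ev_v\colon \Fun(\cV^{\op}, \Cat_{(\infty,1)}) \to \Cat_{(\infty,1)}$ preserves finite products, so it extends canonically to a symmetric monoidal functor.

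First, postcomposing $\ev_v$ with the functoriality of $\Alg_{n-1}(-)$ in symmetric monoidal functors produces, for each $v \in \cV^{\op}$, a functor $\Alg_{n-1}(\Fun(\cV^{\op}, \Cat_{(\infty,1)})) \to \Alg_{n-1}(\Cat_{(\infty,1)})$, and these assemble into the candidate comparison functor
\[
\Phi \colon \Alg_{n-1}\bigl( \Fun(\cV^{\op} , \Cat_{(\infty,1)}) \bigr) \longrightarrow \Fun\bigl( \cV^{\op} , \Alg_{n-1}(\Cat_{(\infty,1)}) \bigr).
\]
To show that $\Phi$ is an equivalence, the approach is to check the universal property of both sides. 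An object of the left-hand side is a morphism of $\infty$-operads $\cE_{n-1}^{\otimes} \to \Fun(\cV^{\op}, \Cat_{(\infty,1)})^{\otimes}$ over the commutative operad; under the exponential law for functor categories, together with the pointwise nature of the symmetric monoidal structure, such a morphism corresponds to a $\cV^{\op}$-indexed family of morphisms $\cE_{n-1}^{\otimes} \to \Cat_{(\infty,1)}^{\otimes}$, i.e. to a functor $\cV^{\op} \to \Alg_{n-1}(\Cat_{(\infty,1)})$. One extends this bijection to morphisms and higher cells by the same exponential-law argument applied to the test parameter.

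The main obstacle is the technical verification of the exponential law at the level of $\infty$-operads --- namely, that morphisms of $\infty$-operads into the pointwise symmetric monoidal structure on $\Fun(K, \cU)$ are the same data as $K$-indexed families of morphisms into $\cU^{\otimes}$. Fortunately, this is a standard result: it is essentially Proposition~2.1.3.1 of Lurie's \emph{Higher Algebra} (specialized to Cartesian symmetric monoidal targets), or equivalently the observation that $\Alg_{n-1}(-)$ commutes with limits in the Cartesian symmetric monoidal target, combined with the fact that $\Fun(\cV^{\op}, \Cat_{(\infty,1)})$ is the $\cV^{\op}$-indexed limit of copies of $\Cat_{(\infty,1)}$. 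Invoking this delivers the stated equivalence directly, which is why the statement is recorded as an observation rather than a theorem.
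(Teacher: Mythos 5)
Your argument is correct and matches the paper's (implicit) justification --- the paper records this as an unproved observation, the point being exactly what you say: the pointwise Cartesian symmetric monoidal structure exhibits $\Fun(\cV^{\op},\Cat_{(\infty,1)})$ as a $\cV^{\op}$-indexed limit of copies of the symmetric monoidal $\infty$-category $\Cat_{(\infty,1)}$, and $\Alg_{{n-1}}(-)$ preserves such limits. The only quibble is the citation: the relevant statement in Lurie's \emph{Higher Algebra} is the remark establishing $\Alg_{\cO}(\Fun(K,\cC)) \simeq \Fun(K,\Alg_{\cO}(\cC))$ for the pointwise structure (Remark~2.1.3.4 in the standard numbering) rather than Proposition~2.1.3.1, but this is a bookkeeping detail, not a mathematical gap.
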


\begin{definition}
\label{d6}
The $\infty$-category of \bit{rigid $(n-1)$-monoidal $1$-categories in $\cV$} is the full $\infty$-subcategory
\[
\Alg_{{n-1}}^{\rigid} \bigl( \Cat_1[\cV] \bigr)
~\subset~
\Alg_{{n-1}}\bigl( \Cat_1[\cV] \bigr)
\]
consisting of those $(n-1)$-monoidal 1-categories $\cR$ in $\cV$ for which, for each $V\in \cV$, the $(n-1)$-monoidal $(\infty,1)$-category $\Hom_{\cV}(V,\cR)$ is rigid.

\end{definition}

\begin{observation}
\label{t34}
There is a defining pullback diagram among $\infty$-categories in which arrow is fully faithful:
\begin{equation}
\label{e49}
\xymatrix{
\Alg_{{n-1}}^{\rigid} \bigl( \Cat_1[\cV] \bigr)
\ar[rr]
\ar[d]
&&
\Alg_{{n-1}}\bigl( \Cat_1[\cV] \bigr)
\ar[d]^-{\Alg_{{n-1}}\bigl( \Cat_1[j] \bigr)}
\\
\Fun\bigl( \cV^{\op} , \Alg_{{n-1}}^{\rigid} (\Cat_{(\infty,1)}) \bigr)
\ar[rr]
&&
\Fun\bigl( \cV^{\op} , \Alg_{{n-1}}(\Cat_{(\infty,1)}) \bigr)
.
}
\end{equation}

\end{observation}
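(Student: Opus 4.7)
The plan is to verify three things in turn: that the right vertical arrow is fully faithful, that the bottom horizontal arrow is fully faithful, and that the resulting square is a pullback. Once these are in place, the top horizontal arrow inherits the pullback property and its fully faithfulness follows formally.

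First, I would unpack the right vertical arrow. Observation~\ref{r4} gives a fully faithful symmetric monoidal functor $\Cat_1[j]\colon \Cat_1[\cV]\hookrightarrow \Fun(\cV^{\op},\Cat_{(\infty,1)})$. Applying $\Alg_{n-1}(-)$, which is a (lax) limit construction and therefore preserves fully faithful functors between symmetric monoidal $\infty$-categories, yields a fully faithful functor $\Alg_{n-1}(\Cat_1[j])\colon \Alg_{n-1}(\Cat_1[\cV])\hookrightarrow \Alg_{n-1}(\Fun(\cV^{\op},\Cat_{(\infty,1)}))$. Composing with the identification of Observation~\ref{t31} gives the right vertical arrow of~(\ref{e49}) and shows it is fully faithful. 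Concretely, this arrow sends a presheaf of $(n-1)$-monoidal $1$-categories $\cR\colon \bDelta^{\op}\to \cV$ to the functor $V\mapsto \Hom_{\cV}(V,\cR)$, equipped with its induced $(n-1)$-monoidal structure.

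Next, the bottom horizontal arrow is induced by post-composition with the fully faithful inclusion $\Alg_{n-1}^{\rigid}(\Cat_{(\infty,1)})\hookrightarrow \Alg_{n-1}(\Cat_{(\infty,1)})$. Post-composition with a fully faithful functor of target $\infty$-categories gives a fully faithful functor between functor $\infty$-categories (as morphism spaces in functor categories are computed as limits of morphism spaces in the target). So the bottom horizontal arrow is fully faithful. The top horizontal arrow is tautologically a full subcategory inclusion.

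For the pullback property, I would argue pointwise on objects and then lift to mapping spaces. Given $\cR\in \Alg_{n-1}(\Cat_1[\cV])$, Definition~\ref{d6} declares $\cR$ rigid precisely when, for each $V\in \cV$, the $(n-1)$-monoidal $(\infty,1)$-category $\Hom_{\cV}(V,\cR)$ is rigid. But the image of $\cR$ under the right vertical arrow is exactly the functor $V\mapsto \Hom_{\cV}(V,\cR)$; this image factors through $\Fun(\cV^{\op},\Alg_{n-1}^{\rigid}(\Cat_{(\infty,1)}))$ if and only if each value is rigid, which is exactly the rigidity condition on $\cR$. Thus the fiber product on the right of~(\ref{e49}), computed in $\CAT$, has the same objects as $\Alg_{n-1}^{\rigid}(\Cat_1[\cV])$; since the right vertical arrow is fully faithful, the fiber product inherits its mapping spaces directly from $\Alg_{n-1}(\Cat_1[\cV])$, which matches the fully faithful inclusion of the top horizontal arrow. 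This exhibits the square as a pullback.

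The main point to handle carefully, rather than any serious obstacle, is the naturality of the identification in Observation~\ref{t31} together with the compatibility of $\Cat_1[j]$ with the Cartesian symmetric monoidal structures of Convention~\ref{d3}, so that $\Alg_{n-1}$ can be applied functorially and the image of $\cR$ really is computed at $V\in \cV$ as $\Hom_{\cV}(V,\cR)$ endowed with its pointwise $(n-1)$-monoidal structure. Granted these bookkeeping identifications, the pullback square is essentially the reformulation of Definition~\ref{d6} as a pointwise condition via the Yoneda embedding.
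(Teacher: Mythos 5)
Your proposal is correct and matches the paper's intent: the paper offers no separate argument for Observation~\ref{t34}, treating it as ``defining'' because Definition~\ref{d6} phrases rigidity exactly as the pointwise condition that the image under $\Alg_{n-1}(\Cat_1[j])$ lands in $\Fun\bigl(\cV^{\op},\Alg_{n-1}^{\rigid}(\Cat_{(\infty,1)})\bigr)$, so the square is the pullback of a full-subcategory inclusion along that functor. Your additional verifications (full faithfulness of the vertical arrow via Observations~\ref{r4} and~\ref{t31}, and of the horizontal arrows) are accurate and simply make explicit what the paper leaves implicit.
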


\subsection{Proof of main theorem}
\label{sec.final}

Here we prove the general case of the main theorem (Theorem~\ref{t13}), phrased as Theorem~\ref{t35} below.

\begin{observation}
\label{t36}
The pointwise action $\sO(n-1)\times \sO(1) \underset{(\ref{e11})} \lacts \Alg_{n-1}(\Cat_{(\infty,1)})$ defines an action $\sO(n-1)\times \sO(1) \underset{(\ref{e11})} \lacts \Fun\bigl( \cV^{\op} , \Alg_{n-1}(\Cat_{(\infty,1)}) \bigr)$.
This action restricts
as an action by $\sO(n-1)\times \sO(1)$ on each of the full $\infty$-subcategories in~(\ref{e49}).  
In the case that $\cV = \Cat_{(g,d-1)}$, this action further restricts through Example~\ref{r9} as an action by $\sO(n-1)\times \sO(1)$ on the $\infty$-categories $\Alg_{n-1}(\Cat_{(g,d)})$ and $\Alg_{n-1}^{\rigid}(\Cat_{(g,d)})$.  

\end{observation}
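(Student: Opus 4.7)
The plan is to construct an intrinsic $\sO(n-1)\times \sO(1)$-action on $\Alg_{n-1}(\Cat_1[\cV])$ and on its rigid full subcategory, then verify that the fully faithful embedding $\Alg_{n-1}\bigl(\Cat_1[j]\bigr)$ of Observation~\ref{t34} intertwines this intrinsic action with the pointwise action on the target of the embedding. The pointwise action on $\Fun(\cV^{\op},\Alg_{n-1}(\Cat_{(\infty,1)}))$ is tautological (postcomposition with the target action); equivariance of the embedding is what guarantees that both full $\infty$-subcategories displayed in~(\ref{e49}) are stable. The final clause of the observation, the case $\cV=\Cat_{(g,d-1)}$, is then a separate check that the intrinsic action preserves the characterization of $(g,d)$-categories in Example~\ref{r9}.

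For the intrinsic action, the involution of $\bDelta$ reversing linear orders induces, by precomposition, an involution of $\Fun(\bDelta^{\op},\cV)$ that preserves finite products and preserves both the Segal and univalent-completeness conditions (via the canonical isomorphisms $[p]^{\op}\cong [p]$, which carry the diagrams~(\ref{e51}) and~(\ref{e52}) to equivalent diagrams). This restricts to a Cartesian-symmetric-monoidal $\sO(1)$-action on $\Cat_1[\cV]$ whose underlying map at simplicial degree $[0]$ is the identity. Observation~\ref{t2} applied to the symmetric monoidal $\infty$-category $\Cat_1[\cV]$ (Convention~\ref{d3}) provides the $\sO(n-1)$-action on $\Alg_{n-1}(\Cat_1[\cV])$, and the two commute to yield a product action (they modify disjoint structures: the $\cE_{n-1}$-operad and the $\bDelta$-indexing). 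Equivariance of $\Alg_{n-1}\bigl(\Cat_1[j]\bigr)$ then follows from naturality of the $\bDelta$-involution and of the $\sO(n-1)$-action on $\cE_{n-1}$, together with functoriality of $\Cat_1[-]$ in finite-limit-preserving functors (Observation~\ref{t26}) applied to the Yoneda functor $j\colon \cV\to \PShv(\cV)$, and the identification of Example~\ref{r5}. Restriction to the rigid subcategory is automatic, since Definition~\ref{d6} imposes rigidity pointwise and pointwise rigidity is preserved by Observation~\ref{t16}.

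For the final clause, take $\cV=\Cat_{(g,d-1)}$. By Example~\ref{r9}, an object $\cC\in \Cat_1[\cV]$ is a $(g,d)$-category iff $\cC([0])$ is a $g$-type and each fiber of $\cC([1])\to \cC([0])\times \cC([0])$ is a $(g-1,d-1)$-category. The $\sO(n-1)$-action alters only the $\cE_{n-1}$-structure, so it fixes the underlying object of $\Cat_1[\cV]$ and both conditions are preserved. The $\sO(1)$-involution $(-)^{\op_1}$ is the identity at degree zero, preserving the $g$-type condition on $\cC([0])$, and at degree one it permutes hom objects via $\un{\Hom}_{\cC^{\op_1}}(U,V)=\un{\Hom}_{\cC}(V,U)$, so each hom remains a $(g-1,d-1)$-category by the hypothesis on $\cC$.

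The main technical obstacle is coherently assembling the $\sO(1)$-involution and the $\sO(n-1)$-action into a genuine product action that is intertwined by $\Alg_{n-1}\bigl(\Cat_1[j]\bigr)$. The combinatorial content---that reversing $\bDelta$ carries the diagrams~(\ref{e51}) and~(\ref{e52}) to equivalent diagrams---is clean; the technical work is formalizing this at the level of $\infty$-categorical functors and naturality of the Yoneda embedding.
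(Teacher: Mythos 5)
Your proposal is correct and supplies exactly the checks the paper leaves implicit (the statement is an unproved Observation): the order-reversing involution of $\bDelta$ carries the diagrams~(\ref{e51}) and~(\ref{e52}) to equivalent ones, the $\sO(n-1)$-action is operadic and so fixes the underlying simplicial object, rigidity is a pointwise condition preserved per Observation~\ref{t16}, and the two conditions of Example~\ref{r9} are stable under both actions. Building the action intrinsically on $\Alg_{n-1}\bigl(\Cat_1[\cV]\bigr)$ and verifying equivariance of the fully faithful embedding $\Alg_{n-1}\bigl(\Cat_1[j]\bigr)$ is, for full $\infty$-subcategories, the same assertion as the paper's phrasing that the pointwise action on $\Fun\bigl(\cV^{\op},\Alg_{n-1}(\Cat_{(\infty,1)})\bigr)$ restricts, so this is essentially the paper's (implicit) argument.
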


Using that the morphism $\sO(n-1)\times \sO(1) \xra{\oplus} \sO(n)$ is surjective on path-components,  Observation~\ref{t36} lends the following.
%
\begin{theorem}
\label{t35}
Let $d \geq 1$ and $d\leq g \leq \infty$.
Theorem~\ref{t19} and Theorem~\ref{t24} are true for the symmetric monoidal $\infty$-category $\Cat_{(\infty,1)}$ replaced by the symmetric monoidal $\infty$-category $\Cat_1[\cV]$, as well as by the symmetric monoidal $\infty$-category $\Cat_{(g,d)}$ of $(g,d)$-categories.

\end{theorem}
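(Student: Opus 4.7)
The plan is to transfer Theorems~\ref{t19} and~\ref{t24} from the case $\cV = \Spaces$ (where $\Cat_1[\cV] = \Cat_{(\infty,1)}$ by Example~\ref{r6}) to arbitrary $\cV$ by realizing $\Alg_{n-1}^{\rigid}(\Cat_1[\cV])$ as a full $\infty$-subcategory of a functor category on which a pointwise action is already defined, and then to reduce the $\Cat_{(g,d)}$ statement to the $\Cat_1[\cV]$ case via Example~\ref{r9}. The starting point is Observation~\ref{t36}, which combined with the pullback square~(\ref{e49}) of Observation~\ref{t34} and the fully faithful embedding $\Cat_1[\cV]\hookrightarrow\Fun(\cV^{\op},\Cat_{(\infty,1)})$ of Observation~\ref{r4} provides an $\sO(n-1)\times\sO(1)$-action on the ambient $\Fun(\cV^{\op},\Alg_{n-1}^{\rigid}(\Cat_{(\infty,1)}))$. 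I would check that this pointwise action restricts to $\Alg_{n-1}^{\rigid}(\Cat_1[\cV])$: the rigidity condition of Definition~\ref{d6} is pointwise in $V\in\cV$ and manifestly stable under the action, while the Segal and univalence conditions of Definition~\ref{d2} are preserved because the $\sO(n-1)$-factor acts only on the $\cE_{n-1}$-operadic structure and the $\sO(1)$-factor reverses linear orders on $\bDelta$, which is a symmetry of the diagrams~(\ref{e51}) and~(\ref{e52}).

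Next I would transfer Theorem~\ref{t19} itself by applying $\Fun(\cV^{\op},-)$ to the commutative diagram~(\ref{e26}), restricting to the full subcategory $\Alg_{n-1}^{\rigid}(\Cat_1[\cV])$, and looping to obtain the analogue of~(\ref{e17}). Lemma~\ref{t18}, applied to the homomorphism $\sO(n-1)\times\sO(1)\xra{\oplus}\sO(n)$, then produces the desired $\sO(n-1)\times\sO(1)$-equivariant extension. Theorem~\ref{t24} is a formal consequence of Theorem~\ref{t19} and Corollary~\ref{t23} via the adjunction~(\ref{e31}); the same formal manipulations, now performed inside $\Cat_1[\cV]$, yield the lifted action diagram~(\ref{e29}) with $\Cat_{(\infty,1)}$ replaced by $\Cat_1[\cV]$ and with the evaluation $\ev_{[p]}\colon\Cat_{(\infty,1)}\to\Spaces$ replaced by $\ev_{[p]}\colon\Cat_1[\cV]\to\cV$.

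For the $\Cat_{(g,d)}$ statement I would apply the preceding to $\cV = \Cat_{(g,d-1)}$ and then restrict along the full inclusion $\Cat_{(g,d)}\subset\Cat_1[\Cat_{(g,d-1)}]$ of Example~\ref{r9}. Observation~\ref{t40} guarantees that this inclusion preserves finite products, hence the Cartesian symmetric monoidal structures agree. It remains to see that the two defining conditions---$\cC([0])$ being a $g$-type and each fiber of $\cC([1])\to\cC([0])^{\times 2}$ being a $(g-1,d-1)$-category---are preserved by the $\sO(n-1)\times\sO(1)$-action of~(\ref{e11}); this is immediate since the $\sO(n-1)$-factor does not alter the underlying $(\infty,d)$-categorical data and the $\sO(1)$-factor acts by taking 1-categorical opposites, which interchanges source and target but preserves all dimensional truncation conditions. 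The restricted action then yields the analogues of Theorems~\ref{t19} and~\ref{t24} for $\Cat_{(g,d)}$.

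The main obstacle I anticipate is not conceptual but is the bookkeeping in the previous two paragraphs: verifying that each of the successive full subcategory inclusions---$\Cat_1[\cV]\hookrightarrow\Fun(\cV^{\op},\Cat_{(\infty,1)})$, $\Alg_{n-1}^{\rigid}\hookrightarrow\Alg_{n-1}$, and $\Cat_{(g,d)}\hookrightarrow\Cat_1[\Cat_{(g,d-1)}]$---is stable under the pointwise $\sO(n-1)\times\sO(1)$-action. Once these closure properties are confirmed, the statement is obtained by applying $\Fun(\cV^{\op},-)$ to the square~(\ref{e26}) and the lifting diagram~(\ref{e29}), and by reapplying Lemma~\ref{t18} and the adjunction~(\ref{e31}) in the larger functor category.
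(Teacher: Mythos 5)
Your proposal is correct and follows essentially the same route as the paper: the paper's proof consists of Observation~\ref{t36} (the pointwise $\sO(n-1)\times\sO(1)$-action on $\Fun(\cV^{\op},\Alg_{n-1}(\Cat_{(\infty,1)}))$ restricts to the full subcategories of~(\ref{e49}) and, via Example~\ref{r9}, to $\Alg_{n-1}^{(\rigid)}(\Cat_{(g,d)})$) together with the surjectivity of $\sO(n-1)\times\sO(1)\to\sO(n)$ on path components, after which the extension data of Theorems~\ref{t19} and~\ref{t24} transfers pointwise exactly as you describe. You supply more of the closure-under-the-action bookkeeping than the paper records explicitly, but the decomposition and the key inputs (Observation~\ref{t36}, the square~(\ref{e49}), Lemma~\ref{t18}, the adjunction~(\ref{e31})) are identical.
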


Now, each of the a priori $(\infty,1)$-categories $\Alg^{\rigid}_{{n-1}}(\Cat_{(g,d)})$ and $\Cat_{(g,d)}$ and $\Spaces_{\leq g}$ is in fact a $(g+1,1)$-category.
Finally, the specific statement of Theorem~\ref{t13} follows from Theorem~\ref{t35} and the following.  
\begin{observation}
\label{t32}
Let $d \geq 1$ and $d\leq g \leq \infty$.
Let $X\in \cX$ be an object in a $(g+1,d)$-category.
By Observation~\ref{t40},
the fully faithful inclusion 
$
\Spaces_{\leq g}
\hookrightarrow
\Spaces
$
is a right adjoint in a localization, 
\[
\Spaces
~\rightleftarrows~
\Spaces_{\leq g}
~,
\]
whose left adjoint preserves products.  
Using this, each morphism from a continuous group uniquely factors:
\[
\xymatrix{
G 
\ar[rr]
\ar[dr]
&&
\Aut_{\cX}(X)
\\
&
\pi_{\leq g} G
\ar@{-->}[ur]_-{\exists !}
&
.
}
\]
\end{observation}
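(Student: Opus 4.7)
The plan is to reduce the factorization claim to the universal property of the reflective localization $\pi_{\leq g}\dashv \iota$ recorded in Observation~\ref{t40}, after lifting that adjunction from $\Spaces$ to continuous groups. The key input is that $\Aut_{\cX}(X)$ has underlying space a $g$-type, which places it in the essential image of $\iota$.

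First I would verify that $\Aut_{\cX}(X)\in \Spaces_{\leq g}$. By Definition~\ref{d7}, whenever $\cX$ is a $(g+1,d)$-category with $d\geq 1$, for each pair of objects $X,Y\in \Obj(\cX)$ the mapping object $\un{\Hom}_{\cX}(X,Y)$ is a $(g,d-1)$-category. A short induction on $d$ then shows that the underlying space of objects of any $(g,d-1)$-category is a $g$-type: the base case $d-1=0$ holds by definition, and the induction step uses that a space is a $g$-type iff each of its path-spaces is a $(g-1)$-type. In particular $\Hom_{\cX}(X,X)$ is a $g$-type, and hence so is its subspace $\Aut_{\cX}(X)$ of invertible $1$-morphisms.

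Next I would lift the adjunction $\pi_{\leq g}\dashv \iota$ to group objects. Because $\pi_{\leq g}\colon \Spaces\to \Spaces_{\leq g}$ preserves finite products and $\iota$ preserves all limits (being a right adjoint), the adjunction induces an adjunction
\[
\pi_{\leq g}\colon {\sf Groups}(\Spaces)~\rightleftarrows~ {\sf Groups}(\Spaces_{\leq g})\colon \iota,
\]
whose right adjoint is again fully faithful. Applying the universal property of this localization to $\Aut_{\cX}(X)$, which lies in the essential image of $\iota$ by the previous step, yields a canonical equivalence on morphism spaces
\[
\Hom_{{\sf Groups}(\Spaces)}\bigl(G,\Aut_{\cX}(X)\bigr)~\xra{~\simeq~}~ \Hom_{{\sf Groups}(\Spaces_{\leq g})}\bigl(\pi_{\leq g}G,\Aut_{\cX}(X)\bigr),
\]
which is precisely the unique factorization asserted in the statement.

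The one step calling for care is the inductive verification that spaces of objects of $(g,d-1)$-categories are $g$-types, since Definition~\ref{d7} is recursive in $d$; everything thereafter is a formal consequence of the reflective localization machinery together with the fact that a left adjoint preserving finite products lifts to group objects.
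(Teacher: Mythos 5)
Your proposal is correct and follows the same route as the paper: the paper's own justification is exactly that the localization $\Spaces \rightleftarrows \Spaces_{\leq g}$ of Observation~\ref{t40} has a product-preserving left adjoint, hence lifts to continuous groups, and the unique factorization is the universal property of that localization applied to $\Aut_{\cX}(X)$. Your additional verification that $\Aut_{\cX}(X)$ is a $g$-type (via induction on $d$, or equivalently via $\Aut_{\cX}(X)\simeq \Omega_X\Obj(\cX)$ with $\Obj(\cX)$ a $(g+1)$-type) is a detail the paper leaves implicit, and it is correct.
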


\end{document}